\newcommand{\ra}{\rightarrow}
\newcommand{\lra}{\longrightarrow}
\newcommand{\by}[1]{\stackrel{#1}{\ra}}
\newcommand{\lby}[1]{\stackrel{#1}{\lra}}
\newcommand{\surj}{\ra\!\!\!\ra}	\newcommand{\inj}{\hookrightarrow}
\newcommand{\ol}{\overline}		\newcommand{\wt}{\widetilde}
\newcommand{\iso}{\by \sim}
\newtheorem{theorem}{Theorem}[chapter]
\newtheorem{proposition}[theorem]{Proposition}
\newtheorem{lemma}[theorem]{Lemma}
\newtheorem{conjecture}[theorem]{Conjecture}
\newtheorem{corollary}[theorem]{Corollary}
\newtheorem{question}[theorem]{Question}
\newcommand{\ga}{\alpha}		\newcommand{\gb}{\beta}
\newcommand{\gf}{\varphi}		
		\newcommand{\gl}{\lambda}
\newcommand{\gt}{\theta}		
		\newcommand{\gw}{\wedge}
\newcommand{\gL}{\Lambda}		
		\newcommand{\gT}{\Theta}
\newcommand{\BA}{\mbox{$\mathbb A$}}	
\newcommand{\BC}{\mbox{$\mathbb C$}}
	\newcommand{\BZ}{\mbox{$\mathbb Z$}}
	\newcommand{\CJ}{\mbox{$\mathcal J$}}
\newcommand{\CK}{\mbox{$\mathcal K$}}
\newcommand{\CS}{\mbox{$\mathcal S$}}
\newcommand{\ma}{\mbox{$\mathfrak a$}}
\newcommand{\mm}{\mbox{$\mathfrak m$}}	
	\newcommand{\p}{\mbox{$\mathfrak p$}}
\newcommand{\mq}{\mbox{$\mathfrak q$}}
\newcommand{\op}{\mbox{$\oplus$}}	
	\newcommand{\Spec}{\mbox{\rm Spec\,}}
\newcommand{\hh}{\mbox{\rm ht\,}}	\newcommand{\Hom}{\mbox{\rm Hom}}
\newcommand{\diag}{\mbox{\rm diag\;}}  	
\newcommand{\Um}{\mbox{\rm Um}}		
		\newcommand{\ot}{\mbox{$\otimes$}}
\newcommand{\Aut}{\mbox{\rm Aut}}	\newcommand{\End}{\mbox{\rm End}}
\def\Com{\UseComputerModernTips}
\begin{document}
\thispagestyle{empty}

\begin{center}
	\vspace*{.4in}
	{\LARGE \bf Complete Intersection Ideals and a Question of Nori}
	\vspace*{.7in}
	
{\large  A Thesis }\\ 
	{\large  Submitted to the Faculty of Science } \\
	{\large of the  University of Mumbai }\\
{\large for the Degree of Doctor of Philosophy}
	\vspace*{.5in}
	
{\large by} \\

   	{\Large \bf  Manoj Kumar Keshari}
\vspace*{.7in}

{\large under the supervision of} \\

{\Large \bf Prof. S. M. Bhatwadekar}
\vspace*{1in}

	{\Large   Tata Institute of Fundamental Research}\\
	{\Large  Mumbai, April - 2004}
\end{center}
\newpage

\vspace*{.4in}
\begin{center}
{\Large \bf Acknowledgments} 
\end{center}
\vspace*{.2in}

I take this opportunity to express my gratitude and thanks to my
thesis supervisor Professor S.M. Bhatwadekar for his invaluable
guidance and help during my thesis work. I would like to thank
Dr. Raja Sridharan for many helpful discussions and for beautiful lectures
in commutative algebra. I would like to thank Professors R. Sridharan,
S. Ramanan, S.G. Dani, Ravi A. Rao, Riddhi Shah, A.J. Parameswaran and
P.A. Gastesi for their lectures which I attended and also for their help
during my 1st year. I would like to
thank the office staff of the School of Mathematics for their prompt,
efficient and friendly handling of all administrative formalities.

I would like to thank my seniors Holla, Siddharta, Pralay, Preeti and
Amalendu for many valuable discussions and help. I thank Ig, Funda,
Venky, Kavitha, Gagan, Vinod, Gulab, Surjeet, Tomas, Alex, Sanjeeb,
Pratik, Patta, Debu, Amitava, Rahul, Krishnan, Neel, Yeshpal, Sandhu,
Yogesh, Ashok, Rajeev, Lakshmi, Balaji, Bipul, Nandi, Shashi for
making my stay in TIFR enjoyable. I thank Meera, Dishant, Awadhesh,
Dubey, Ram, Amit, Ritumoni, Vishal, Shanta, Ananth, Amala, Rabeya,
Vivek, Aarati, Sriram, Narayanan, Anand, Deepankar, Vaibhav, Ashutosh,
Amarnath, Apoorva, Deepshikha, Dareush, Sumedha, Poonam, Mrinal, Toni
for their friendship.

Words cannot express my gratitude to my parents. I dedicate this
thesis to them.

\setcounter{chapter}{-1}

\tableofcontents
\newpage
\pagenumbering{arabic}


\chapter{Introduction}

Let $k$ be a field and let
$V$ be a closed irreducible sub-variety of $\BA^n_k$. 
Let $\p \subset k[X_1,\ldots,X_n]$ be the prime
ideal defining the variety $V$.  We say that $V$ is 
{\it ideal theoretically} generated by $d$ elements if the ideal $\p$ is
generated by $d$ elements and
$V$ is {\it set-theoretically} generated by $d$ elements if
there exist $f_1,\ldots,f_d \in \p$ such that $\sqrt
{(f_1,\ldots,f_d)}= \p$, i.e. the variety $V$ is an intersection of $d$
hyper-surfaces. 

By a classical result of Kronecker \cite{Kro}, any variety in $\BA^n_k$
is an intersection of $n+1$ hyper-surfaces.
Eisenbud and Evans \cite{EEE} and Storch \cite{St} independently 
showed that any variety in $\BA^n_k$
is an intersection of $n$ hyper-surfaces, thus improving the above result
of Kronecker. In other words, given a prime
ideal $\p$ of $k[X_1,\ldots,X_n]$, there exist $f_1,\ldots,f_n \in \p$
such that $\sqrt {(f_1,\ldots,f_n)}= \p$, i.e. any prime ideal
$p$ in $k[X_1,\ldots,X_n]$ is set-theoretically generated by
$n$ elements. 

In view of the above result of Eisenbud-Evans and Storch, it is
natural to ask: 
\medskip

\noindent{\it Question: Does there exists a positive integer $d$ such that 
$\mu(\p)\leq d$ for all prime ideals $\p$ in
$k[X_1,\ldots,X_n]$ ($k$: field), where $\mu(\p)$ 
denotes the minimal number of generators of $\p$?}

Since $k[X_1,\ldots,X_n]$ is a UFD, every prime ideal $\p$
of height $1$ is principle. Also, it is well known that every maximal
ideal of $k[X_1,\ldots,X_n]$ is generated by $n$ elements (see
\cite{Matsu}, Theorem
5.3). Therefore, it is enough to consider prime ideals of height
$1< \hh \p <n$.

A classical example of Macaulay (see \cite{Abh} for details) shows
that given any positive integer $r\geq 4$, there exists a height $2$ 
prime ideal $\p$ in $\BC[X_1,X_2,X_3]$ such that $\mu(\p) \geq r$.
Therefore, the set $\{\mu(\p) \,|\, \p :$ prime ideal in
$\BC[X_1,X_2,X_3]\}$ is not bounded above. The ring
$\BC[X_1,X_2,X_3]/\p$ is not regular for all prime ideals $\p$ in 
Macaulay's example. Therefore, one can ask the following modified question: 
\medskip

\noindent{\it Question: Does there exists a positive integer $d$ such
that $\mu(\p) \leq d$ for all prime ideal $\p \subset
k[X_1,\ldots,X_n]$ such that $k[X_1,\ldots, X_n]/\p$ is regular? }
\medskip

Forster \cite{Forster} gave an affirmative answer to the above
 question. More precisely, he proved the following:

\begin{theorem}
Let $\p \subset k[X_1,\ldots,X_n]$ be a prime ideal, where $k$ is a
field. Assume that $k[X_1,\ldots,X_n]/\p$ is regular. Then $\p/\p^2$ 
as $k[X_1,\ldots,X_n]/\p$-module is generated by $n$ elements and
hence $\p$ is generated by $n+1$ elements.
\end{theorem}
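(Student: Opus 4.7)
The plan is to argue in three steps. Set $A = k[X_1,\ldots,X_n]$, $R = A/\p$, and $h = \hh \p$; by the dimension formula $\dim R = n - h$.

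Step 1 is to show that $\p/\p^2$ is a projective $R$-module of constant rank $h$. Since $A$ is regular (a polynomial ring over a field) and $R$ is regular by hypothesis, for every maximal ideal $\mm$ of $A$ containing $\p$ both $A_\mm$ and $R_\mm = A_\mm/\p A_\mm$ are regular local. A standard argument then shows that $\p A_\mm$ is generated by part of a regular system of parameters of $A_\mm$; in particular it is generated by a regular sequence of length $h$, so $(\p/\p^2)_\mm \cong R_\mm^{\,h}$. Because $\p/\p^2$ is a finitely presented $R$-module whose localizations at all maximal ideals of $R$ are free of the same rank $h$, it is projective of rank $h$.

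Step 2 applies Forster's bound: for a finitely generated module $M$ over a Noetherian ring $R$,
\[
\mu(M) \;\leq\; \max_{\mq \in \Spec R}\bigl(\mu(M_\mq) + \dim R/\mq\bigr).
\]
For $M = \p/\p^2$ one has $\mu(M_\mq) = h$ at every prime and $\dim R/\mq \leq \dim R = n-h$, so the right-hand side is at most $n$. Hence there exist $f_1,\ldots,f_n \in \p$ whose images generate $\p/\p^2$ as an $R$-module; setting $J = (f_1,\ldots,f_n) \subset A$ we obtain $\p = J + \p^2$.

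Step 3 upgrades this to a bound of $n+1$ generators for $\p$. Reducing $\p = J + \p^2$ modulo $J$ yields $\p/J = \p \cdot (\p/J)$; since $\p/J$ is a finitely generated $A$-module, the determinant (or ``$aM = M$'') form of Nakayama's lemma furnishes some $x \in \p$ with $(1-x)(\p/J) = 0$. Then $(1-x)\p \subset J$ gives $\p \subset J + (x)$, and so $\p = (f_1,\ldots,f_n,x)$. I expect Step 2 to be the main obstacle: the whole argument rests on Forster's global estimate for the number of generators of a finitely generated module in terms of local data, which I would invoke as a known result. Step 1 is a routine consequence of the regularity hypothesis, and Step 3 is the standard Nakayama-based lifting, essentially automatic once $\p = J + \p^2$ has been arranged.
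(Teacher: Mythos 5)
Your proposal is correct and follows essentially the same route as the paper: regularity of $k[X_1,\ldots,X_n]$ and of the quotient makes $\p$ locally a complete intersection of height $\hh\p$, Forster's bound (Theorem \ref{F}) then gives $n$ generators of $\p/\p^2$, and the Nakayama/determinant trick lifts this to $n+1$ generators of $\p$. The only cosmetic difference is that you apply Forster's estimate to $\p/\p^2$ as a module over $k[X_1,\ldots,X_n]/\p$, whereas the paper applies it over $k[X_1,\ldots,X_n]$ itself; both give the same bound $n$.
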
 

Since $k[X_1,\ldots,X_n]$ is regular, if $\p\subset k[X_1,\ldots,X_n]$
is a prime ideal such that $k[X_1,\ldots,X_n]/\p$ is regular, then $\p$
is locally generated by $\hh \p$ elements. Now, it follows from the
following theorem (\ref{F}) of Forster \cite{Forster} (with
$A=k[X_1,\ldots,X_n]$ and $M=\p/\p^2$) that $\p/\p^2$ is
generated by $n$ elements. 

\begin{theorem}\label{F}
Let $A$ be a Noetherian ring and let
$M$ be a finitely generated $A$-module. Then $M$ is generated by
sup $\{ \mu(M_{\mq})+\dim (A/\mq) : \mq\in \Spec A \}$ elements.
\end{theorem}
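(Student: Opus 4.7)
Set $s^*(M):=\sup\{\mu(M_\mq)+\dim(A/\mq):\mq\in\Spec A,\ M_\mq\ne 0\}$; since $s^*(M)$ is bounded above by the supremum $s(M)$ appearing in the statement, it suffices to prove $\mu(M)\le s^*(M)$. The plan is to induct on $s^*(M)$, the base case $s^*(M)=0$ forcing $M=0$. For the inductive step I will exhibit an element $m\in M$ with $s^*(M/Am)\le s^*(M)-1$; the inductive hypothesis then gives $\mu(M/Am)\le s^*(M)-1$, and appending $m$ to a generating set of $M/Am$ yields $\mu(M)\le s^*(M)$.

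Write $j(\mq,M):=\mu(M_\mq)+\dim(A/\mq)$ and call a prime $\mq$ \emph{critical} if $M_\mq\ne 0$ and $j(\mq,M)=s^*(M)$. For any $\mq$ in the support of $M/Am$ that is not critical one automatically has $j(\mq,M/Am)\le j(\mq,M)\le s^*(M)-1$. Hence the task reduces to choosing $m$ that is \emph{basic} at every critical $\mq$, meaning that the image of $m$ in $M_\mq/\mq M_\mq$ is nonzero; Nakayama's lemma then forces $\mu((M/Am)_\mq)=\mu(M_\mq)-1$, and so $j(\mq,M/Am)=s^*(M)-1$ at those primes too.

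The main technical ingredient is the reduction of the basicness condition to a \emph{finite} set of primes. For each $k\ge 1$ the locus $V_k:=\{\mq\in\Spec A:\mu(M_\mq)\ge k\}$ is the vanishing set $V(\mathrm{Fitt}_{k-1}(M))$ of a Fitting ideal, so it is Zariski-closed and, by Noetherianity, has only finitely many minimal primes. If $\mq$ is critical with $\mu(M_\mq)=k$, then $\mq$ must be a minimal prime of $V_k$: were $\mq'\subsetneq\mq$ with $\mu(M_{\mq'})\ge k$, then $\dim(A/\mq')\ge\dim(A/\mq)+1$ would give $j(\mq',M)\ge s^*(M)+1$, contradicting the definition of $s^*(M)$. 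Thus the set $C$ of critical primes is contained in the finite union $\bigcup_{k=1}^{s^*(M)}\{\text{minimal primes of }V_k\}$.

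Given this finite $C$, I construct $m$ by prime avoidance: pick generators $m_1,\dots,m_r$ of $M$ and note that at each $\mq\in C$ the set of residue tuples $(\bar a_1,\dots,\bar a_r)\in k(\mq)^r$ with $\sum \bar a_i \bar m_i=0$ in $M\otimes_A k(\mq)$ is a proper $k(\mq)$-subspace (proper since $M\otimes k(\mq)\ne 0$). The Chinese Remainder Theorem lets me prescribe the residues of $(a_1,\dots,a_r)$ modulo each $\mq\in C$ independently; avoiding the proper bad subspace over each residue field and lifting to $A^r$ produces $m=\sum a_i m_i\in M$ basic at every $\mq\in C$, closing the induction. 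The principal obstacle I anticipate is the finiteness reduction of the previous paragraph---once $C$ is known to be finite, the CRT/prime-avoidance construction and the inductive bookkeeping are routine.
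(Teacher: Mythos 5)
The paper states this theorem of Forster without proof (it is quoted from \cite{Forster}), so there is no in-paper argument to compare against; judging your proposal on its own, the overall strategy is the classical one and most of it is sound: the induction on the bound, the Nakayama bookkeeping at critical primes, and the reduction to a finite set $C$ of critical primes via Fitting ideals and minimality are all correct. The genuine gap is the final construction of the element $m$ by the Chinese Remainder Theorem. CRT lets you prescribe residues modulo the $\mq\in C$ independently only when these primes are pairwise comaximal, and critical primes need not be: they can even be comparable. For example, take $A=k[x]_{(x)}$ and $M=A\oplus A/(x)$; then $j((0),M)=1+1=2$ and $j((x),M)=2+0=2$, so both $(0)$ and $(x)$ are critical (with different values of $k$, so your minimality-in-$V_k$ argument does not separate them), and $A\ra A/(0)\times A/(x)$ is far from surjective. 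So the step ``prescribe the residues independently'' fails as stated, even though a basic element (here $(1,0)$) does exist.

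The missing ingredient is the standard ``basic element'' prime-avoidance lemma, which is exactly designed for a finite set of not necessarily comaximal primes: order $C=\{\p_1,\ldots,\p_t\}$ so that at each stage the prime being treated, say $\p_t$, is minimal among those remaining; inductively assume $m$ is basic at $\p_1,\ldots,\p_{t-1}$; if $m$ fails to be basic at $\p_t$, choose $m'\in M$ with nonzero image in $M_{\p_t}/\p_t M_{\p_t}$ and, using that no $\p_i$ ($i<t$) is contained in $\p_t$, an element $a\in(\p_1\cap\cdots\cap\p_{t-1})\setminus\p_t$; then $m+am'$ is basic at all of $\p_1,\ldots,\p_t$, since $am'$ dies in $M_{\p_i}/\p_iM_{\p_i}$ for $i<t$ while $a$ acts as a nonzero scalar on $M_{\p_t}/\p_tM_{\p_t}$. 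Replacing your CRT step by this lemma (or by the Eisenbud--Evans machinery) closes the argument; the rest of your write-up, including the Fitting-ideal finiteness reduction, can stand as is. (A cosmetic point: when the supremum in the statement is infinite the claim is vacuous, so one should say the induction is on the finite value of $s^*(M)$.)
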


Moreover, after proving above result, Forster conjectured:

\begin{conjecture}
Let $\p\subset k[X_1,\ldots,X_n]$ be a prime ideal such that
$k[X_1,\ldots,X_n]/\p$ is regular. Then
$\p$ is generated by $n$ elements, i.e. $\mu(\p)\leq n$.
\end{conjecture}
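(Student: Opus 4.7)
The plan is to bridge the gap between the preceding theorem of Forster (which yields $\mu(\p/\p^2)\leq n$) and the sharper bound $\mu(\p)\leq n$ by lifting generators. Let $R=k[X_1,\ldots,X_n]$ and $h=\hh\p$. Since both $R$ and $R/\p$ are regular, $\p$ is locally a complete intersection of height $h$, and hence $\p/\p^2$ is a projective $R/\p$-module of rank $h$. By the previous theorem $\p/\p^2$ admits a generating set of $n$ elements; choose lifts $a_1,\ldots,a_n\in\p$ and set $J=(a_1,\ldots,a_n)$. Nakayama gives $J+\p^2=\p$, so $J_\p=\p_\p$, and we can write $J=\p\cap\ma$ for some ideal $\ma$ comaximal with $\p$.

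The strategy is then to replace each $a_i$ by $a_i+b_i$ with $b_i\in\p^2$ so as to eliminate the extraneous component $V(\ma)$, leaving exactly $\p$. This is the now-standard Mohan Kumar--Murthy lifting philosophy: via an addition principle one transfers sections across connected components of $V(J)$, and the freedom to modify within $\p^2$ suffices provided a certain patching problem can be solved. A key ingredient is that a stably free module of rank equal to the dimension over a smooth affine $k$-variety of dimension $n-h$ is free, which follows from Suslin's cancellation theorem over polynomial rings; this supplies the local splittings that the addition principle glues together.

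The main obstacle is that the patching problem need not be solvable in general: the obstruction is a class in the Euler class group of Nori--Bhatwadekar--Sridharan attached to $R$ and $\p$, and its vanishing is essentially equivalent to the conjecture itself. Any honest attack must therefore exploit features peculiar to affine space---most naturally $\BA^1$-homotopy invariance together with the triviality of projective modules over $R$ (Quillen--Suslin)---to force this Euler class to be the class of a free module and hence zero. I expect the conjecture to be verifiable under auxiliary hypotheses (for instance when $\hh\p=n-1$, essentially a theorem of Mohan Kumar, or when $\p$ contains a monic polynomial in some $X_i$, where one can invoke a Laurent-polynomial or patching argument), while the general case reduces to a precise vanishing statement for the Euler class which I do not see how to establish without additional input from the structure of $\p$.
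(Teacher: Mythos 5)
Your proposal sets up the right lifting problem (Forster's bound $\mu(\p/\p^2)\leq n$, lifts $a_1,\ldots,a_n$, $J=\p\cap\ma$ with $\ma$ comaximal with $\p$), but it then stalls: you yourself say the general case reduces to an Euler-class vanishing statement that you cannot establish. That is a genuine gap, and the missing idea is elementary: the case you list as an ``auxiliary hypothesis'' --- $\p$ containing a monic polynomial in some variable --- is in fact the general case. First reduce to $\hh\p\geq 2$ (height one primes are principal since $k[X_1,\ldots,X_n]$ is a UFD, and maximal ideals are generated by $n$ elements). Then, for any nonzero ideal, a change of variables makes some element monic in $X_n$: for $k$ infinite a generic linear change of coordinates does it, and in general the substitution $X_i\mapsto X_i+X_n^{N^i}$ for large $N$ works. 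Writing $k[X_1,\ldots,X_n]=A[X_n]$ with $A=k[X_1,\ldots,X_{n-1}]$, one now has an ideal $I=\p$ of $A[X_n]$ containing a monic polynomial, with $I/I^2$ generated by $n$ elements and $n\geq\dim(A[X_n]/I)+2$ (this is where $\hh\p\geq 2$ enters). Mohan Kumar's theorem (Theorem \ref{<} of the paper) then exhibits $\p$ as a surjective image of a projective module of rank $n$ over $k[X_1,\ldots,X_n]$, which is free by Quillen--Suslin, so $\mu(\p)\leq n$; alternatively Mandal's theorem gives the $n$ generators directly, lifting the chosen generators of $I/I^2$. This is exactly the route the paper records (following Mohan Kumar; Sathaye's proof for infinite $k$ is independent).

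Your worry about an Euler-class obstruction is misplaced in this setting: that obstruction is the genuine difficulty only when the ideal in $A[T]$ does \emph{not} contain a monic polynomial (this is what Nori's question and the main theorem of the thesis address), whereas for $\p\subset k[X_1,\ldots,X_n]$ the monic trick is always available and bypasses any patching obstruction. Also, the appeal to Suslin cancellation for stably free modules over the smooth variety $V(\p)$ is not needed anywhere; what is needed is freeness of projective modules over the polynomial ring itself, i.e.\ Quillen--Suslin, exactly as in the paper's argument.
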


Note that to prove Forster's conjecture, we can assume that 
$\hh \p \geq 2$. 

Abhyankar \cite{Abh1} (in the case $k$ is algebraically closed) and Murthy
\cite{Mu} independently settled the
case $n=3$ (the first non-trivial case) 
of Forster's conjecture. More precisely, they proved that
if $\p$ is a prime ideal of $k[X_1,X_2,X_3]$ such that
$k[X_1,X_2,X_3]/\p$ is regular, then $\p$ is generated by $3$
elements.  General case of Forster's conjecture was settled by Sathaye
\cite{Sathaye} (in the case $k$ is an infinite field) and Mohan Kumar
\cite{MK-3} independently. 

To prove Forster's conjecture, Mohan Kumar
proved the following more general result (proof of this is implicit in
(\cite{MK-3}, Theorem 5)).

\begin{theorem}\label{<}
Let $A$ be a commutative Noetherian ring and let $I$ be an ideal of
$A[T]$ such that $I/I^2$ is generated by $n$ elements. Assume that $n
\geq \dim(A[T]/I) +2 $. If $I$ contains a monic polynomial, then $I$
is a surjective image of a projective $A[T]$-module of rank $n$ with
trivial determinant.
\end{theorem}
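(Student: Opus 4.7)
The plan is to construct $P$ as the quotient of a free module $A[T]^{n+1}$ by a carefully chosen free rank-one submodule. First, I would lift the $n$ given generators of $I/I^2$ to elements $a_1,\ldots,a_n\in I$, so that $I=J+I^2$ with $J=(a_1,\ldots,a_n)$. The image of $I$ in $A[T]/J$ is an ideal equal to its own square, so by Nakayama's lemma (in its ideal-theoretic form) there exists $e\in I$ with $I=J+A[T]e$ and $e(1-e)\in J$. Writing $e-e^2=\sum c_i a_i$ produces a surjection $\alpha\colon A[T]^{n+1}\to I$, $(x_1,\ldots,x_{n+1})\mapsto\sum_{i\le n}x_i a_i+x_{n+1}e$, together with the explicit relation $v:=(-c_1,\ldots,-c_n,1-e)\in\ker\alpha$.

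The key observation is that if $v$ can be arranged to be unimodular in $A[T]^{n+1}$, then $A[T]\cdot v$ is a free direct summand and the quotient $P:=A[T]^{n+1}/A[T]\cdot v$ is stably free of rank $n$---hence projective of rank $n$ with trivial determinant---while $\alpha$ descends to a surjection $P\to I$. The coordinates of $v$ generate the ideal $(c_1,\ldots,c_n,1-e)$, and at every prime $\p\supseteq I$ one has $e\in\p$, so $1-e\notin\p$ and unimodularity is automatic; the failure locus thus lies entirely in the open complement of $V(I)$ in $\Spec A[T]$. The task reduces to modifying $a_i\mapsto a_i+\mu_i$ with $\mu_i\in I^2$ (and correspondingly updating $e$ and the $c_i$'s) so that this failure locus becomes empty.

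The technical core---and the main obstacle---is carrying out this modification via a basic-element argument in the spirit of Eisenbud-Evans, applied to the relation module $\ker\alpha$. The dimension hypothesis $n\ge\dim(A[T]/I)+2$ provides the local slack that such an argument requires, while the monic polynomial in $I$ is essential for transferring the Eisenbud-Evans machinery from the base ring $A$ to the polynomial extension $A[T]$: one inverts the monic $f\in I$ to work over $A[T]_f$, handles the finite quotient $A[T]/(f)$ by the base-ring Eisenbud-Evans theorem, and patches in Quillen-Suslin style. Verifying that the adjustment preserves the idempotent structure from the first step, and that the resulting $v$ is simultaneously unimodular at every prime of $A[T]$, is where the bulk of the work lies.
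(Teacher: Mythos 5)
You should first note that this thesis does not prove the statement at all: it is quoted in the introduction from Mohan Kumar's paper \cite{MK-3} (``proof implicit in Theorem 5''), so there is no in-paper argument to measure your proposal against, and it must stand on its own. Your first two paragraphs are correct, but they are the routine part: lifting generators, producing the Nakayama idempotent $e$ with $I=(a_1,\ldots,a_n)+A[T]e$ and $e(1-e)\in J$, the surjection $\alpha:A[T]^{n+1}\surj I$, the relation $v=(-c_1,\ldots,-c_n,1-e)\in\ker\alpha$, and the observation that unimodularity of $v$ would finish the proof (indeed with the stronger conclusion that $P$ is stably free). Everything that makes the theorem a theorem --- using the monic polynomial and the bound $n\geq \dim(A[T]/I)+2$ to actually reach such a situation --- is deferred to your last paragraph, which is a plan rather than an argument, and as described the plan does not work.

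Concretely: (i) the hypothesis $n\geq\dim(A[T]/I)+2$ gives ``local slack'' only at primes containing $I$, and, as you yourself observe, those are precisely the primes where $v$ is automatically unimodular. The failure locus consists of primes $\p$ with $I\not\subset\p$ (for instance minimal primes of $(1-e)$), where $\dim(A[T]/\p)$ can be as large as $\dim A$ and is in no way bounded by $n$; any Eisenbud--Evans/basic-element count on $\ker\alpha$ needs roughly $n>\dim(A[T]/\p)$ (or a generalized dimension bound) at such primes, so the machinery breaks down exactly where it is needed. (ii) Your intended use of the monic polynomial --- invert $f$, treat $A[T]/(f)$ by the base-ring theorem, and ``patch in Quillen--Suslin style'' --- is not a patching situation: $D(f)$ and $V(f)$ do not form an open cover of $\Spec A[T]$, and Quillen's patching concerns covers $D(s)\cup D(t)$ with $As+At=A$ (or Horrocks-type descent along monic inversion); this step has no content as stated. (iii) If your strategy succeeded, it would show $P$ can be taken stably free; that stronger statement is true, but essentially because Mandal's later theorem gives that $I$ itself is generated by $n$ elements --- a strictly deeper result --- which is a good indication that the step you label ``technical'' is the entire substance of Mohan Kumar's theorem. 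The known argument constructs $P$ by gluing local surjections, exploiting the comaximal decomposition coming from $e$ and $1-e$ together with the monic polynomial, rather than by exhibiting a unimodular relation vector; so what you have is a correct reduction followed by a genuine gap.
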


Forster's conjecture follows from above result of Mohan Kumar. For,
suppose $\p$ is a prime ideal of $k[X_1,\ldots,X_n]$ of height $\geq
2$ such that $\p/\p^2$ is generated by $n$ elements. Since $\hh \p
\geq 2$, $n\geq \dim (k[X_1,\ldots,X_n]/\p) +2$. Further, after a change
of variable, we can assume that $\p$ contains a monic polynomial in
the variable $X_n$. Hence, by Mohan Kumar's result (\ref{<}), $\p$ is
a surjective image of a projective $k[X_1,\ldots,X_n]$-module of rank
$n$. Since, by Quillen-Suslin result \cite{Quillen,Su}, every
projective $k[X_1,\ldots,X_n]$-module is free. Hence $\p$ is generated
by $n$ elements.

Subsequently, Mandal improved Mohan Kumar's result by showing that $I$
is generated by $n$ elements (\cite{Mandal-2}, Theorem 1.2). More
precisely, he proved the following result. 

\begin{theorem}
Let $A$ be a commutative Noetherian ring and let $I$ be an ideal of
$A[T]$ such that $I/I^2$ is generated by $n$ elements. Assume that $n
\geq \dim(A[T]/I) +2 $.  If $I$ contains a monic polynomial, then $I$
is generated by $n$ elements. In-fact, he proved that any $n$ generators
of $I/I^2$ can be lifted to $n$ generators of $I$. 
\end{theorem}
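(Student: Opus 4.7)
The plan is to build on Mohan Kumar's Theorem \ref{<}, then eliminate one generator via an Eisenbud--Evans-style manipulation while preserving the prescribed residues modulo $I^2$. Applying Theorem \ref{<} to $I$ produces a projective $A[T]$-module $P$ of rank $n$ with trivial determinant together with a surjection $\alpha:P\twoheadrightarrow I$. Triviality of $\det P$ makes $P$ stably free, so $P\oplus A[T]\cong A[T]^{n+1}$; composing with $\alpha\oplus 0:P\oplus A[T]\to I$ then exhibits $I$ as generated by some $(n+1)$-tuple $(g_0,g_1,\ldots,g_n)$. The task reduces to passing from $n+1$ to $n$ generators in a way compatible with the chosen lift of $I/I^2$.

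Fix $a_1,\ldots,a_n\in I$ whose classes generate $I/I^2$. I would first exploit the redundancy of having $n+1$ generators of $I$ while $I/I^2$ is only $n$-generated: by elementary row operations on $A[T]^{n+1}$ (which act on the surjection by pre-composition with automorphisms and so remain surjections) I would arrange $g_i\equiv a_i\pmod{I^2}$ for $i\geq 1$ while $g_0\in I^2$. Next, following the Eisenbud--Evans--Mohan Kumar elimination technique, I would seek $t_1,\ldots,t_n\in A[T]$ such that the replacements $h_i:=g_i+t_ig_0$ satisfy $g_0\in(h_1,\ldots,h_n)$. Once this is achieved, the chain $(h_1,\ldots,h_n)\supseteq(h_1,\ldots,h_n,g_0)=(g_0,g_1,\ldots,g_n)=I$ forces $(h_1,\ldots,h_n)=I$; and since $g_0\in I^2$, we have $h_i\equiv g_i\equiv a_i\pmod{I^2}$, giving the required lifted generators.

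The main obstacle is the selection of the $t_i$. It amounts to a prime-avoidance / general-position problem: one must choose $t_i$ so that the cyclic $A[T]/I$-module $I/(h_1,\ldots,h_n)$, generated by the class of $g_0$, vanishes, i.e., so that $(h_1,\ldots,h_n)$ already contains $g_0$. Both hypotheses enter essentially. The dimension bound $n\geq\dim(A[T]/I)+2$ is precisely the Eisenbud--Evans threshold forcing a generic choice of $t_i$ to avoid the finitely many ``bad'' primes of $\Spec(A[T]/I)$ where the desired containment could fail. The monic polynomial in $I$ makes $A[T]/I$ module-finite over a quotient of $A$, which both controls $\dim(A[T]/I)$ and enables a Quillen-style local--global patching needed to realize the $t_i$ as genuine elements of $A[T]$ rather than only as local data. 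Combined, these ingredients furnish the desired $t_i$ and complete the proof.
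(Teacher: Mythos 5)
The paper does not actually prove this statement; it is quoted from Mandal's paper, and the closest ingredient reproduced in the thesis is the stronger version stated as Theorem \ref{mand}, also cited without proof. So your proposal can only be judged on its own merits, and there it has two problems, one cosmetic and one fatal.

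The cosmetic one: your passage from Theorem \ref{<} to an $(n+1)$-generated presentation rests on the claim that triviality of $\det P$ makes $P$ stably free, which is false in general (trivial determinant only kills the first Chern class; rank $\geq 2$ projectives with trivial determinant need not be stably free). Fortunately this detour is unnecessary: from $I=(a_1,\ldots,a_n)+I^2$ the standard idempotent-lifting argument (Lemma \ref{002} of the thesis) already gives $I=(a_1,\ldots,a_n,e)$ with $e\in I^2$, which is exactly the configuration you want, with the residues modulo $I^2$ on the nose. The fatal one is the elimination step, which is where the entire content of the theorem sits. You assert that the bound $n\geq\dim(A[T]/I)+2$ is ``precisely the Eisenbud--Evans threshold'' allowing a generic choice of $t_1,\ldots,t_n$ with $e\in(a_1+t_1e,\ldots,a_n+t_ne)$, with the monic polynomial relegated to controlling $\dim(A[T]/I)$ and ``enabling Quillen-style patching.'' This cannot work as described: the paper's own example refutes it. Take $A$ the coordinate ring of the real $3$-sphere, $\mathfrak m$ a real maximal ideal, $I=\mathfrak m A[T]$. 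Then $I/I^2$ is generated by $n=3$ elements, $\dim(A[T]/I)=1$, so $n\geq\dim(A[T]/I)+2$ holds, yet $I$ is not $3$-generated. Every step of your argument up to and including the elimination makes no essential use of the monic hypothesis, so if the ``generic choice of $t_i$ by prime avoidance'' existed under the stated bound, it would prove a false statement in this example.

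Concretely, a prime-avoidance or basic-element argument for modifying generators of $I$ inside $A[T]$ has to be measured against $\dim A[T]$ (or the dimension of the relevant support), not against $\dim(A[T]/I)$; the whole point of the monic polynomial in Mohan Kumar's and Mandal's arguments is to make the comparison ring integral over $A$ so that the dimension that enters is the small one. In the known proofs (and in the analogous Lemma \ref{mand1} of this thesis) one works modulo a monic $g\in I$, uses that $A\hookrightarrow A[T]/(g)$ is integral so $\dim$ and Jacobson-radical behaviour are inherited from $A$, solves the lifting problem there, and then patches back over $A[T]_g$ against the monic localization; alternatively Mandal's homotopy-style argument uses the monic element to move the ideal. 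Your proposal needs to replace the sentence ``the monic polynomial enables a Quillen-style local--global patching needed to realize the $t_i$'' with an actual mechanism of this kind; as written, the step that produces the $t_i$ is exactly the gap, and the hypothesis that distinguishes the true theorem from the false one has not been used.
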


It is interesting to investigate the following:\\ 
{\it Question: In what generality the above result of
Mandal is valid?}
\medskip

Suppose that $A$ is the coordinate ring of the real 3-sphere
and $\mm$ is a real maximal ideal. Let $I = \mm A[T].$ Then, it is
easy to see that $\mu(I/I^2) = 3 = \dim (A[T]/I) +2.$ Since $\mm$ is
not generated by $3$ elements \cite{Bora}, $I$ can not be
generated by $3$ elements. Such examples show that the above result
of Mandal is not valid for an ideal $I$ not containing a monic
polynomial without further assumptions. 

Obviously, one such natural
assumption would be that $I(0)$ is generated by $n$ elements, where
$I(0)$ denotes the ideal $\{ f(0) : f(T)\in I \}$ of $A$. Even then,
as shown in (\cite{Bh-Raja-1}, Example 5.2) $I$ may not be generated
by $n$ elements.  Therefore, it is natural to ask: {\it what further
conditions are needed to conclude that $I$ is generated by $n$
elements?}  Towards this goal, motivated by a result from topology
(see Appendix by M. Nori in \cite{Mandal-3}), Nori posed the following
general question:
   
\begin{question} 
Let $A$ be a regular affine domain of dimension $d$ over an infinite
perfect field $k$ and
let $n$ be an integer such that $2n \geq d+3$.
Let $I$ be a prime ideal of $A[T]$ of height $n$
such that $A[T]/I$ and $A/I(0)$
are regular $k$-algebras. 
Let $P$ be a projective $A$-module of rank $n$ and let $\phi : P[T]
\surj I/(I^2T)$ be a surjection. Then, can we lift $\phi$ to a
surjection from $P[T]$ to $I$? 
\end{question}

Note that, giving a surjection $\phi : P[T] \surj I/(I^2T)$ is
equivalent to giving two surjections $\psi : P[T] \surj I/I^2$ and
$\ga : P \surj I(0)$ such that $\psi \ot A/I(0)=\ga \ot A/I(0) : P
\surj I(0)/I(0)^2$
(\cite{Bh-Raja-1}, Remark 3.9). 
\medskip

The {\bf main result} of this thesis (\ref{general}) gives an affirmative
answer to the above question of Nori. More precisely, we prove the
following (\cite{Bh-Manoj}, Theorem 4.13):

\begin{theorem}\label{,}
Let $k$ be an infinite perfect field and let 
$A$ be a regular domain of dimension $d$ which is 
essentially of finite type over 
$k$. Let $n$ be an integer such that $2n \geq
d+3$. Let $I\subset A[T]$ be an ideal of height $n$ and let $P$ be a 
projective $A$-module of rank $n$.
	Assume that we are given a surjection 
$$\phi : P[T]\surj I/(I^2T).$$ 
	Then, there exists a surjection
$$\Phi: P[T]\surj I$$ 
	such that $\Phi$ is a lift of $\phi$.

In particular, suppose $I/(I^2T)$ is
generated by $n$ elements. Then $I$ is generated by $n$ elements.

\end{theorem}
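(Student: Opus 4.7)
The plan is to recast the lifting problem as a vanishing statement in an Euler class group and then exploit the hypothesis $2n\geq d+3$ together with the specialisation at $T=0$ to force the vanishing. By (\cite{Bh-Raja-1}, Remark 3.9), the data of a surjection $\phi : P[T]\surj I/(I^2T)$ is equivalent to a compatible pair $(\psi,\ga)$, where $\psi : P[T]\surj I/I^2$ and $\ga : P\surj I(0)$. The essential input, which distinguishes this problem from Mandal's monic-polynomial theorem, is that $\ga$ is already defined as a surjection from the full module $P$.

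I would begin by lifting $(\psi,\ga)$ to a homomorphism $\Phi_0 : P[T]\to A[T]$; its image is an ideal $J\subseteq I$ satisfying $J + I^2T = I$. Write $I = J\cap I'$ with $I'$ comaximal with $J$. A general-position / moving-lemma argument (valid because $k$ is infinite and $A$ is regular essentially of finite type) allows one to adjust $\Phi_0$ so that $I'$ is either the unit ideal, in which case there is nothing more to do, or has height exactly $n$ with $I'(0)$ also of height $n$, and so that $\gb := \Phi_0(0)$ induces a surjection from $P$ onto $I'(0)$. If one can construct a surjection $\Phi' : P[T]\surj I'$ compatible with $\Phi_0\pmod{(I')^2}$ and with $\gb$ at $T=0$, then patching $\Phi_0$ and $\Phi'$ over the comaximal decomposition $I = J\cap I'$ produces the desired $\Phi$.

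The obstruction to producing $\Phi'$ is an Euler class $e(I',\omega')\in E^n(A[T], \det P[T])$, where $\omega'$ is the local orientation of $I'$ induced by $\Phi_0$ and $E^n$ is the Euler class group of Bhatwadekar--Sridharan, extended to $A[T]$. The stable-range hypothesis $2n\geq d+3 = \dim A[T] + 2$ is precisely what makes this group well-defined and what turns the existence of $\Phi'$ into the vanishing of $e(I',\omega')$. Evaluation at $T=0$ carries $e(I',\omega')$ to $e(I'(0), \ol\omega')\in E^n(A,\det P)$; because $\gb$ is a bona fide surjection $P\surj I'(0)$, this specialised Euler class is zero.

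The crux, and the main technical obstacle, is then to show that the evaluation map $E^n(A[T], \det P[T])\to E^n(A, \det P)$ is injective on the class $e(I',\omega')$, so that its vanishing at $T=0$ forces $e(I',\omega') = 0$ over $A[T]$. This is a homotopy-invariance statement for Euler class groups of regular rings essentially of finite type over an infinite perfect field, and is established by a subtraction principle combined with patching along the $T$-direction and a Lindel-type argument in the polynomial variable; the hypotheses that $k$ is infinite and perfect are used precisely here. Once this injectivity is in place, $e(I',\omega')=0$ yields the surjection $\Phi'$, which patches with $\Phi_0$ to give the required $\Phi : P[T]\surj I$ lifting $\phi$. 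The final assertion, that $I$ itself is generated by $n$ elements when $I/(I^2T)$ is, follows by taking $P = A^n$ and applying the first part.
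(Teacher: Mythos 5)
Your opening reductions are fine and in fact run parallel to the paper's own strategy: the equivalence of $\phi$ with a compatible pair $(\psi,\ga)$, the choice of a lift $\Phi_0$ whose image is $I\cap I'$ with $I'$ in general position, and the observation that a suitable surjection onto $I'$ can be ``subtracted'' to produce one onto $I$ are exactly what the Moving Lemma (\ref{move}) and the Subtraction principles (\ref{subtraction}), (\ref{subtract}), (\ref{0sub}) accomplish. The problem is the step you call the crux. The asserted injectivity of the evaluation map $E^n(A[T],\det P[T])\ra E^n(A,\det P)$ on the class $e(I',\omega')$ is not an available tool: unwound, it says precisely that a height $n$ ideal of $A[T]$ which is surjectively hit by $P$ at $T=0$ and carries a surjection from $P[T]$ modulo the square of the ideal is surjectively hit by $P[T]$ --- that is, it is a reformulation of the theorem you are trying to prove (for the residual ideal $I'$, which is in no way easier than $I$). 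At the time of this work no Euler class group of a polynomial algebra existed in the range $2n\geq d+3$ (only the case $n=d$, by Das \cite{Das}, and the absolute group $E^n(A)$ of \cite{Bh-Raja-5}, which moreover is set up for trivial determinant rather than the twisted group $E^n(-,\det P)$ you invoke); indeed the paper's concluding Application explicitly proposes that such a theory for projective modules should be \emph{deduced from} Theorem \ref{general}, not used to prove it.

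Even setting aside the historical point, every property you need of this hypothetical group --- well-definedness of $e(I',\omega')$, the equivalence ``class vanishes if and only if a surjective lift from $P[T]$ exists,'' and the homotopy-invariance/injectivity statement --- would have to be established by exactly the apparatus the paper builds: the local theorem of Mandal--Varma (\ref{M-V}), the reduction to $A(T)$ and the monic-polynomial devices (\ref{mand1}), (\ref{021}), (\ref{monic}), the lemmas (\ref{3.5}), (\ref{3.2}), (\ref{square-lift}), (\ref{m-phil}), and finally Quillen patching over the set of $s\in A$ for which the lifting problem is solvable after localization. Your sketch defers all of this to ``a subtraction principle combined with patching along the $T$-direction and a Lindel-type argument,'' which names the toolbox but supplies no argument. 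As written, the proposal is circular at its central step: the vanishing-at-$T=0$ implies vanishing-over-$A[T]$ assertion is the theorem itself in Euler-class clothing, so no proof has been given.
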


Prior to our theorem, the following partial results were obtained:

Mandal (\cite{Mandal-3}, Theorem 2.1) answered the question in affirmative 
in the case $I$ contains 
a monic polynomial even without any smoothness condition. An example
is given in the case $d=n=3$ (see \cite{Bh-Raja-1},
 Example 6.4) which shows that
the question does not have an affirmative answer if we do not assume
that $I$ contains a monic polynomial and  drop
the assumption that $A$ is smooth.

Mandal and Varma (\cite{Mandal-Varma}, Theorem 4) settled the
question, where $A$ is a regular $k$-spot (i.e. a local ring of a
regular affine $k$-algebra).  Subsequently, Bhatwadekar and Raja
Sridharan (\cite{Bh-Raja-1}, Theorem 3.8) answered the question in the
case $\dim (A[T]/I)=1$.
\medskip

Using the techniques developed to prove Theorem \ref{,}, we prove the
following result (\ref{5.4}). 

\begin{theorem}
Let $A$ be a commutative  Noetherian ring containing an infinite field 
and let $P$ be a projective $A[T]$-module of rank $r \geq (\dim A
+3)/2$ which is
extended from $A$. Assume 
that $P_{f(T)}$ has a unimodular element for some monic polynomial
$f(T)\in A[T]$. Then $P$ has a unimodular element. 
\end{theorem}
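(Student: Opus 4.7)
The plan is to convert the hypothesis on $P_{f(T)}$ into a surjection onto an ideal of $A[T]$ containing a monic polynomial, and then to use the lifting technique of Theorem \ref{,} to modify this surjection into one onto all of $A[T]$; such a surjection $P\surj A[T]$ splits, and so supplies the desired unimodular element of $P$. Write $P=P_0[T]$ with $P_0$ projective of rank $r$ over $A$. A unimodular element of $P_{f(T)}$ yields, after clearing denominators, a homomorphism $\alpha\in\Hom_{A[T]}(P,A[T])$ and an element $p\in P$ with $\alpha(p)=f^N$ for some $N\geq 1$. Set $I=\alpha(P)\subseteq A[T]$. Then $I$ contains the monic polynomial $f^N$, $\alpha:P\surj I$ is surjective, and $A[T]/I$ is integral over the image of $A$, so $\dim(A[T]/I)\leq d$.

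Next, I would recast the problem in the form addressed by Theorem \ref{,}. The surjection $\alpha:P\surj I$ descends to $\bar\alpha:P\surj I/I^2$, and evaluating at $T=0$ gives $\alpha(0):P_0\surj I(0)\subset A$. By the remark following Nori's question in the excerpt, these data assemble into a surjection $\phi:P\surj I/(I^2T)$. I would then invoke the technique of Theorem \ref{,} to lift $\phi$ to a surjection $\Phi:P\surj I$ agreeing with $\alpha$ modulo $I^2T$. Finally, since $I$ contains the monic polynomial $f^N$, a standard ``swap'' (replacing $\Phi$ by $\Phi+g\cdot\Psi$ for suitable $g\in A[T]$ and $\Psi\in\Hom_{A[T]}(P,A[T])$, with $g\cdot\Psi$ chosen to enlarge the image from $I$ to all of $A[T]$) produces the desired surjection $P\surj A[T]$.

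The main obstacle is that Theorem \ref{,} as stated assumes that $A$ is a regular affine domain over an infinite perfect field, whereas here $A$ is merely Noetherian containing an infinite field. So the actual argument has to rework the proof of Theorem \ref{,} in this wider setting --- the monic polynomial in $I$ substitutes for the role of regularity, in that it directly supplies the finiteness $\dim(A[T]/I)\leq d$ that regularity together with the height assumption was providing previously. The precise rank bound $2r\geq d+3$ is used, just as in Theorem \ref{,}, to justify generic-choice arguments (enabled by the infinite field hypothesis) that perform the lift even though $r$ is substantially smaller than the bound $r\geq\dim(A[T]/I)+2$ that would be required by a direct application of Mandal's theorem.
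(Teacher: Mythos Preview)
Your proposal has a genuine gap at the crucial step. You correctly obtain a surjection $\alpha:P\surj I$ with $I$ containing a monic polynomial (this is the content of Lemma~\ref{006} in the paper). But then your plan goes astray in two ways. First, invoking Theorem~\ref{,} to lift $\phi:P\surj I/(I^2T)$ to a surjection $\Phi:P\surj I$ accomplishes nothing: the map $\alpha$ you already have \emph{is} such a lift, so this step is vacuous. Second, and more seriously, your final ``standard swap'' --- adding $g\cdot\Psi$ to $\Phi$ to enlarge the image from $I$ to all of $A[T]$ --- is not a real argument. Having a surjection onto a proper ideal $I$, even one containing a monic, does not by any elementary modification yield a surjection onto $A[T]$; this passage is exactly the content of the theorem, not a routine adjustment. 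The acknowledgment that regularity fails and that one must ``rework'' Theorem~\ref{,} does not help, because Theorem~\ref{,} addresses lifting to $I$, not producing a unimodular element.

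The paper's proof takes a completely different route: it is a patching argument in the spirit of Quillen. After obtaining $\Phi:P[T]\surj I$ with $I$ containing a monic, one localises at $1+J$ (where $J=I\cap A$) and uses Lemma~\ref{mand1} to find a lift $\Psi=(\psi,h(T)):P_{1+J}[T]\surj I_{1+J}$ whose last coordinate $h(T)$ is monic. One then shows, via Lemmas~\ref{isotopic}, \ref{ex}, and \ref{lem9}, that over a suitable $A_{ab}[T]$ the unimodular elements $(pr_2)_a$ and $\Phi(0)_{ab}\otimes A_{ab}[T]$ are isotopically connected. Quillen's splitting lemma (\ref{isotopy}) then factors the connecting automorphism and lets one patch a surjection $P_b[T]\surj A_b[T]$ with one $P_a[T]\surj A_a[T]$ to obtain $P[T]\surj A[T]$. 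Theorem~\ref{,} is not used at all.
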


The above result gives a partial answer to the following question of
Roitman \cite{Roitman}. 

\begin{question}
Let $A$ be a commutative Noetherian ring and let $P$ be a projective
$A[T]$-module such that $P_{f(T)}$ has a unimodular element for some
monic polynomial $f(T)$. Then, does $P$ have a unimodular element?
\end{question}

The layout of the thesis is as follows: In chapter $1$, we recall some
basic definitions and state some well known results for later use. In
chapter $2$, we prove some basic results and Subtraction principle
which is the main ingredient for our main result. In chapter $3$, we
prove our main result. Chapter $4$ contains some applications of the
results proved in previous chapters.


\chapter{Preliminaries}

All rings considered in this thesis are commutative and Noetherian
with unity and all
modules are finitely generated. For a ring
$A$, the Jacobson radical of $A$ is denoted by $\CJ(A)$. We begin with
a few definitions and subsequently state some basic and useful results
without proof. For all the terms not defined here, we refer to
\cite{Matsu}. 

\begin{definition}
Let $A$ be a ring. The supremum of the lengths $r$, taken over all
strictly increasing chains 
	$\p_0 \subset \p_1 \subset \ldots\subset \p_r$ 
of prime ideals of $A$, is called the {\it Krull dimension} of
$A$ or simply the dimension of $A$, denoted by $\dim A$.

For a prime ideal $\p$ of $A$, the supremum of the lengths
$r$, taken over all strictly increasing chains 
	$\p_0 \subset \p_1 \subset \ldots\subset \p_r = \p$ 
of prime ideals of $A$, is called the
the height of $\p$, denoted by $\hh \p$. Note that for a
Noetherian ring $A$, $\hh \p < \infty$.

	For an ideal $I \subset A$, the infimum of the heights of
$\p$, taken over all prime ideals $\p \subset A$ such that $I \subset
\p$, is defined to be {\it height} of $I$, denoted by $\hh I$.
\end{definition}

\begin{remark}\label{lem1}
Let $I$ be an ideal of $A$. Then, it is clear from the definition that
$\dim (A/I) + \hh I \leq \dim A$.
\end{remark}

\begin{definition}
	An $A$-module $P$ is said to be {\it projective} if it
satisfies one of the following equivalent conditions:

	(i) Given $A$-modules $M,\,N$ and an $A$-linear surjective map
$\alpha : M \surj N$, the canonical map from Hom$_A(P,M)$ to
Hom$_A(P,N)$ sending $\theta$ to $\alpha \theta$ is surjective.

	(ii) Given an $A$-module $M$ and a surjective $A$-linear map
$\alpha : M \surj P$, there exists an $A$-linear map $\beta : P \ra M$
such that $\alpha \beta = 1_P$.

	(iii) There exists an $A$-module $Q$ such that $P \oplus Q
\simeq A^n$ for some positive integer $n$, i.e. $P \oplus Q$ is free.
\end{definition} 
\medskip

Now, we state the well known Nakayama Lemma.

\begin{lemma}\label{Nakayama}
	Let $A$ be a ring and let $M$ be a finitely
generated $A$-module.  Let $I\subset A$ be an ideal such that $I M
=M$.  Then, there exists $a\in I$ such that $(1+a)M = 0$.  In
particular, if $I \subset \CJ(A)$, then
$(1+a)$ is a unit and hence $M = 0$.
\end{lemma}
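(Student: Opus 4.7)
The plan is to use the standard determinant trick (a Cayley--Hamilton style argument) to produce the element $1+a$ that annihilates $M$. Since $M$ is finitely generated, choose generators $m_1,\ldots,m_n \in M$. The hypothesis $IM = M$ means that each generator can be rewritten as an $I$-linear combination of the generators, so there exist $a_{ij} \in I$ with
$$m_i = \sum_{j=1}^{n} a_{ij}\, m_j \qquad (1 \le i \le n).$$
Rearranging, this is the matrix identity $(I_n - B)\vec{m} = 0$, where $B = (a_{ij})$ is an $n \times n$ matrix with entries in $I$ and $\vec m$ is the column vector of the $m_i$.

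Next, I would multiply on the left by the classical adjoint $\mathrm{adj}(I_n - B)$. Since $\mathrm{adj}(I_n - B)\cdot (I_n - B) = \det(I_n - B)\cdot I_n$, this forces $\det(I_n - B)\cdot m_i = 0$ for every $i$, hence $\det(I_n - B)$ annihilates the whole module $M$. Now expand $\det(I_n - B)$: every monomial in the expansion either is the product of the diagonal entries $\prod_i(1 - a_{ii})$ or involves at least one off-diagonal entry $a_{ij}\in I$. Separating out the constant term $1$ from the diagonal product, we obtain $\det(I_n - B) = 1 + a$ for some $a \in I$. Thus $(1+a)M = 0$, which is the first assertion.

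For the ``in particular'' part, assume $I \subset \CJ(A)$. It is a standard characterization of the Jacobson radical that $a \in \CJ(A)$ if and only if $1 + xa$ is a unit for every $x \in A$; in particular $1 + a$ is a unit in $A$. Multiplying $(1+a)M = 0$ by $(1+a)^{-1}$ yields $M = 0$, completing the proof.

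There is no real obstacle here: the only point to be careful about is the combinatorial expansion of $\det(I_n - B)$ to see that the constant term is exactly $1$ and every other contribution lies in $I$ (which is immediate because $I$ is an ideal and any non-identity permutation contributes a monomial with at least one factor from $I$, while the identity permutation contributes $\prod_i(1-a_{ii}) = 1 + (\text{terms in } I)$).
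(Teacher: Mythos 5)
Your proof is correct: the determinant-trick argument (multiplying $(I_n-B)\vec m=0$ by the adjugate, expanding $\det(I_n-B)=1+a$ with $a\in I$, and then using the characterization of $\CJ(A)$ to make $1+a$ a unit) is the standard and complete proof of this statement. The paper itself states this lemma as a well-known result without proof, so there is no divergence to report; your write-up fills that gap with exactly the classical argument.
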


\begin{lemma} 
Let $I$ be an ideal of $A$ which is contained in the Jacobson
radical of $A$. Let $P,Q$ be projective $A$-modules.  If projective
$A/I$-modules
$P/IP$ and $Q/IQ$ are isomorphic, then $P$ and $Q$ are isomorphic as
$A$-modules.  
\end{lemma}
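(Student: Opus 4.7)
The plan is to lift the given isomorphism $\bar\phi : P/IP \iso Q/IQ$ to a map $\phi : P \ra Q$ and then show (using Nakayama's lemma twice and the projectivity of $Q$) that $\phi$ is itself an isomorphism.

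First I would use the projectivity of $P$. The natural surjection $Q \surj Q/IQ$ together with the composition $P \surj P/IP \by{\bar\phi} Q/IQ$ fits into the lifting diagram for $P$, so there exists $\phi : P \ra Q$ with $\phi \ot A/I = \bar\phi$.

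Next I would show $\phi$ is surjective. Since $\bar\phi$ is surjective, $\phi(P) + IQ = Q$, so the finitely generated $A$-module $N := Q/\phi(P)$ satisfies $IN = N$. Because $I \subset \CJ(A)$, Lemma \ref{Nakayama} forces $N=0$, i.e.\ $\phi$ is surjective.

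Now I would use projectivity of $Q$ to split the surjection $\phi : P \surj Q$, writing $P \simeq Q \oplus K$ with $K = \ker(\phi)$ (a direct summand of a finitely generated module, hence finitely generated). Reducing this decomposition modulo $I$ gives $P/IP \simeq Q/IQ \oplus K/IK$, and under this isomorphism the projection to $Q/IQ$ is precisely $\bar\phi$, which is an isomorphism. Therefore $K/IK = 0$, and another application of Nakayama (Lemma \ref{Nakayama}) to the finitely generated module $K$ with $I \subset \CJ(A)$ yields $K=0$. Hence $\phi : P \iso Q$.

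There is no real obstacle here; the only point that warrants care is ensuring finite generation of $K$ before invoking Nakayama, which is immediate from the convention in the preliminaries that all modules are finitely generated and from the fact that $K$ is a direct summand of $P$.
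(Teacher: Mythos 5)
Your proposal is correct and follows essentially the same route as the paper: lift $\overline{\phi}$ using projectivity of $P$, obtain surjectivity of $\phi$ from Nakayama, and then use projectivity of $Q$ together with a second application of Nakayama to conclude. The only cosmetic difference is the final step, where you apply Nakayama to $K=\ker(\phi)$ after splitting $P\simeq Q\oplus K$, while the paper works with the section $\beta$ (satisfying $\phi\beta=\mathrm{Id}_Q$), shows $\beta$ is surjective by reducing modulo $I$ and applying Nakayama, and then deduces injectivity of $\phi$; both arguments rest on the same splitting.
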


\begin{proof}
	Let $\ol \alpha : P/IP \iso Q/IQ$ be an isomorphism. 
Since $P$ is projective, $\ol \alpha$ can be lifted to an $A$-linear
map $\alpha : P \ra Q$. We claim that $\alpha$ is an isomorphism.

	Since $\ol \alpha$ is surjective, $Q = \alpha (P) + IQ$. Hence,
as $I\subset \CJ(A)$, by Nakayama lemma
(\ref{Nakayama}), 
we get $Q = \alpha(P)$. Hence $\alpha$ is surjective.

	Since $Q$ is projective, there exists an $A$-linear map 
	$\beta : Q \ra P$ such that $\alpha \beta = $ Id$_Q$. 
Let $\ol \beta : Q/IQ \ra P/IP$ be the map induced by $\beta$. 
Then,  we have $\ol \alpha \,\ol \beta = $ Id$_{Q/IQ}$. 
	As $\ol \alpha$ is an isomorphism, we get
that $\ol \beta$ is also an isomorphism and in particular $\ol \beta$
is surjective. Therefore $P = \beta(Q) +IP$. Hence, as before, we see
that $\beta$ is surjective. Now, injectivity of $\alpha$ follows from
the fact that $\alpha\beta = $ Id. 
$\hfill \square$
\end{proof}
\medskip

The following result is an immediate consequence of the above result.

\begin{corollary}\label{=}
	Let $A$ be a local ring. Then, every  projective
$A$-module is free.
\end{corollary}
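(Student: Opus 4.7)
The plan is to reduce this to a direct application of the immediately preceding lemma by producing a free module $Q$ whose reduction mod the maximal ideal matches that of $P$.

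First, recall that since $A$ is assumed to be (commutative, Noetherian) local, its maximal ideal $\mm$ coincides with $\CJ(A)$, and all modules considered are finitely generated by the standing convention of the chapter. Let $P$ be a projective $A$-module. Then $P/\mm P$ is a finitely generated module over the residue field $k=A/\mm$, hence a finite-dimensional $k$-vector space; set $n=\dim_k(P/\mm P)$, so that $P/\mm P\cong k^n$ as $A/\mm$-modules.

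Next, take $Q=A^n$. Then $Q/\mm Q\cong (A/\mm)^n=k^n$, so we obtain an isomorphism of $A/\mm$-modules $P/\mm P\iso Q/\mm Q$. Both $P$ and $Q$ are projective $A$-modules, and $\mm\subset\CJ(A)$, so the hypotheses of the preceding lemma are satisfied with $I=\mm$.

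Applying that lemma yields $P\cong Q=A^n$, proving that $P$ is free. There is essentially no obstacle here: the only ingredients are the identification of $\CJ(A)$ with $\mm$ in a local ring, the elementary fact that any two $k$-vector spaces of the same finite dimension are isomorphic, and the invocation of the preceding lemma, which handles the lifting from $A/\mm$ to $A$ via Nakayama.
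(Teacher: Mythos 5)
Your proof is correct and is exactly the argument the paper intends: the corollary is stated as an immediate consequence of the preceding lemma, applied with $I=\mm=\CJ(A)$, $Q=A^n$ where $n=\dim_{A/\mm}(P/\mm P)$. Nothing further is needed.
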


\begin{definition}
For a ring $A$, $\Spec A$ denotes the set of all prime ideals of $A$. 
For an ideal $I\subset A$, we denote by
$V(I)$, the set of all prime ideals of $A$ containing $I$. For $f\in
A$, we denote by $D(f)$, the set of all prime ideals of $A$ not
containing the element $f$. The {\it Zariski topology} on Spec$\;(A)$
is the topology for which all the closed sets are of the form $V(I)$,
for some ideal $I$ of $A$ or equivalently the basic open sets are of
the form $D(f),\;f\in A$. 
\end{definition}

\begin{definition}
Let $P$ be a projective $A$-module. In view of (\ref{=}), we define
the rank function, rank$_P : \Spec A \ra \BZ$ by rank$_P(\mq) =$ rank
of the free $A_{\mq}$-module $P\otimes_A A_{\mq}$. If rank$_P$ is a
constant function taking the value $n$, then, we define the rank of
$P$ to be $n$ and denote it by rk $P$.
\end{definition}

\begin{remark}
	rank$_P$ is a continuous function (with the discrete topology
on $\BZ$ and Zariski topology on Spec $A$). Moreover, rank$_P$ is a
constant function for every finitely generated projective $A$-module
$P$ if $A$ has no non trivial idempotent elements.
\end{remark}

\begin{definition}
	Given a projective $A$-module $P$ and an element $p \in P$, we
define ${\cal O}_P(p) = \{ \alpha (p) \,|\,\alpha \in P^\ast\}$. We
say that $p$ is {\it unimodular} if ${\cal O}_P(p) = A$. The set of
all unimodular elements of $P$ is denoted by $\Um(P)$. 
If $P = A^n$, then we write $\Um_n(A)$ for $\Um(A^n)$. Note that ${\cal
O}_P(p)$ is an ideal of $A$ and $p\in P$ is a unimodular element if and
only if there exists $\ga\in P^*=\Hom_A(P,A)$ such that $\ga(p)=1$.

Let $P$ be a projective $A$-module of rank $n$. Let $\gw^n(P)$ denote
the $n^{th}$ exterior power of $P$. Then $\gw^n(P)$ is a projective
$A$-module of rank $1$ and is called the {\it determinant of $P$}. We
say determinant of $P$ is trivial if $\gw^n(P)=A$. 
\end{definition}
\medskip

Now, we state a classical result of Serre \cite{S1}.

\begin{theorem}\label{Serre}
Let $A$ be a ring with $\dim (A/\CJ(A))= d$. Then, any projective
$A$-module $P$ of rank $> d$ has a unimodular element. 
\end{theorem}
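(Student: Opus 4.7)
The result is Serre's classical splitting theorem; my plan is to reduce to the semisimple quotient by Nakayama, then perform a basic-element induction on dimension. I would first reduce to the case $\CJ(A)=0$. Set $\ol A=A/\CJ(A)$ and $\ol P=P/\CJ(A)P$, and suppose $\ol p\in\ol P$ is unimodular over $\ol A$. Any lift $p\in P$ then satisfies $\CO_P(p)+\CJ(A)=A$: projectivity of $P$ allows lifting a splitting $\ol\phi:\ol P\ra\ol A$ with $\ol\phi(\ol p)=1$ to a map $\phi:P\ra A$, and $\phi(p)\in\CO_P(p)$ reduces to $1$ modulo $\CJ(A)$. If $\CO_P(p)\ne A$, then $\CO_P(p)\subset\mm$ for some maximal ideal $\mm$, but also $\CJ(A)\subset\mm$, contradicting $\CO_P(p)+\CJ(A)=A$. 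So one may assume $\CJ(A)=0$, giving $\dim A=d<n:=\mathrm{rk}\,P$.

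Next, induct on $d$. When $d=0$, the ring $A$ is reduced and Artinian, hence a finite product of fields $A\cong k_1\times\cdots\times k_r$; correspondingly $P\cong V_1\times\cdots\times V_r$ with each $V_i$ a $k_i$-vector space of dimension $n\ge 1$, and a tuple of nonzero vectors gives a unimodular element. For $d>0$, the plan is to produce $p\in P$ \emph{basic} at every prime, meaning $\CO_P(p)\not\subset\p$ for all $\p\in\Spec A$, which is equivalent to $\CO_P(p)=A$. I would start by choosing $p_0\in P$ basic at each of the finitely many minimal primes $\p_1,\ldots,\p_s$ of $A$ via a prime avoidance argument on the proper submodules $\p_iP\subsetneq P$; this ensures $V(\CO_P(p_0))$ omits the top-dimensional components of $\Spec A$, so $\dim A/\CO_P(p_0)<d$.

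The remainder is an iterative basic-element improvement: at each stage, enumerate the finitely many primes of maximal dimension in $V(\CO_P(p))$ and invoke prime avoidance --- enabled by the strict rank inequality $\mu(P_\p)=n>\dim A/\p$ --- to produce $q\in P$ such that $p+q$ is basic at each new top-dimensional prime while remaining basic where $p$ already was. The dimension of $V(\CO_P(p))$ strictly decreases at each stage, and the process terminates in finitely many iterations with $\CO_P(p)=A$. The main obstacle, and the technical heart of the argument, is carrying out this simultaneous adjustment coherently; this is essentially the content of the Eisenbud--Evans refinement of Forster's Theorem~\ref{F}, and the rank hypothesis $n>d$ is exactly what supplies the room to perform each adjustment without destroying previously achieved basicness.
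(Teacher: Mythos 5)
The paper does not prove this statement at all: it is quoted as a classical theorem of Serre with a reference to \cite{S1}, so there is no internal proof to compare with, and your sketch has to stand on its own. The parts you carry out in detail are fine: the reduction modulo $\CJ(A)$ (lifting a splitting by projectivity and using $\CO_P(p)+\CJ(A)=A$ to force $\CO_P(p)=A$) is correct, the case $d=0$ is correct, and the observation that the top-dimensional primes of $V(\CO_P(p))$ are minimal over $\CO_P(p)$, hence finite in number, is the right bookkeeping. (Minor caveat: in the first step, avoiding the submodules $\p_iP$ for the minimal primes $\p_1,\ldots,\p_s$ is not literal prime avoidance --- a module can be a finite union of proper submodules --- one needs the standard trick of taking $p+aq$ with $a\in\p_1\cap\cdots\cap\p_{s-1}$, $a\notin\p_s$, which works because the $\p_i$ are incomparable and only needs rank $\geq 1$, not $n>d$.)

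The genuine gap is in the iterative step, which is where the whole theorem lives. You ask for $q$ such that $p+q$ is basic at the finitely many top-dimensional primes of $V(\CO_P(p))$ ``while remaining basic where $p$ already was'': that last condition concerns infinitely many primes and cannot be arranged by prime avoidance; and even the weaker condition that actually drives the iteration --- that $p+q$ be basic at \emph{every} prime of dimension $\geq \dim A/\CO_P(p)$, so that the dimension of the failure locus strictly drops --- is again a condition at infinitely many primes, since the modification can create new failures away from the finitely many primes you adjusted for. No mechanism is offered for this, and this is exactly the nontrivial content of the Eisenbud--Evans/Swan exchange argument (working in $A\oplus P$, choosing the correcting term with coefficients in a suitable ideal, and processing primes by descending $\dim A/\p$); it is also precisely where the hypothesis $n\geq 1+\dim A/\p$ is consumed, not in the finite avoidance step as your sketch suggests. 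As written, the proposal therefore either has a hole at its technical heart or amounts to quoting the Eisenbud--Evans basic element theorem, a result at least as strong as the one to be proved --- which is a legitimate deduction, and in fact no worse than what the thesis itself does by citing Serre, but it is not a self-contained proof.
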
   

The following is a classical result of Bass \cite{Bass}.

\begin{theorem}\label{1bass}
Let $A$ be a ring of dimension $d$ and let $P$ be a projective $A$-module 
of rank $>d$. Let $(p,a)\in \Um(P\op A)$. Then, there exists $q\in P$
such that $p+aq \in \Um(P)$. In particular,
 $E(P\op A)$ acts transitively on $\Um(P\op A)$. 
\end{theorem}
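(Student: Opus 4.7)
The plan is to reduce the problem to Serre's theorem (\ref{Serre}) by passing to the quotient $\ol A := A/J$, where $J := \CO_P(p)$. Unimodularity of $(p,a) \in \Um(P \op A)$ yields $J + aA = A$, so the image $\ol a$ in $\ol A$ is a unit. A routine local calculation---at each prime $\p$, $P_\p$ is free and $J A_\p$ is generated by the coordinates of $p$---shows $p \in JP$, so $\ol p = 0$ in $\ol P := P/JP$. I would then apply Serre's theorem to the projective $\ol A$-module $\ol P$, of rank $n > d \geq \dim \ol A$, to obtain a unimodular element $\ol q$ and lift it to some $q_0 \in P$; the image of $p + a q_0$ in $\ol P$ is $\ol a\, \ol q$, unimodular as a unit multiple of a unimodular element, whence $\CO_P(p + a q_0) + J = A$.

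To strengthen this to $\CO_P(p + a q_0) = A$, I would verify unimodularity of $p + a q_0$ at every maximal ideal $\mm$ of $A$. When $\mm \supseteq J$, the element $\ol q$ becomes unimodular in $P_\mm / \mm P_\mm$, $a$ is a unit at $\mm$, and $p \in \mm P_\mm$, so $p + a q_0 \equiv a q_0 \pmod{\mm P_\mm}$ is unimodular at $\mm$. When $J \not\subseteq \mm$ but $a \in \mm$, the element $p$ is already unimodular at $\mm$ and $a q_0 \in \mm P_\mm$, so $p + a q_0 \equiv p$ is unimodular at $\mm$. The remaining case $J \not\subseteq \mm$ and $a \notin \mm$ is delicate: both $J$ and $a$ are units at $\mm$ and neither hypothesis forces unimodularity of $p + a q_0$ there.

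This last case is the main obstacle, and I expect to handle it by a prime-avoidance refinement of the lift. The set of maximal ideals $\mm$ where $p + a q_0$ fails to be unimodular is closed, lies in $D(J) \cap D(a)$, and has only finitely many minimal members; at each such $\mm$ one has $(JP)_\mm = P_\mm$ and $(aP)_\mm = P_\mm$, so I can adjust $q_0$ by a carefully chosen $q_1 \in JP$ (picked via Chinese remainder across the finitely many bad primes) so that $q := q_0 + q_1$ yields $p + a q \in \Um(P)$ without disturbing the two easy cases above. The ``in particular'' statement is then routine: choose $\ga \in P^*$ with $\ga(p + a q) = 1$; the successive elementary transvections $(x,t) \mapsto (x + t q, t)$, $(x,t) \mapsto (x, t - a\ga(x))$, $(x,t) \mapsto (x, t + \ga(x))$, and $(x,t) \mapsto (x - t(p + a q), t)$ carry $(p, a)$ to $(0,1)$, proving that $E(P \op A)$ acts transitively on $\Um(P \op A)$.
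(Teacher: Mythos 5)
Your reduction modulo $J={\cal O}_P(p)$ and the two easy cases are fine (in fact they come almost for free: since $(p+aq_0,a)$ is again unimodular, ${\cal O}_P(p+aq_0)+aA=A$, and the Serre step gives ${\cal O}_P(p+aq_0)+J=A$), but the ``delicate case'' you postpone is not a loose end to be cleaned up afterwards --- it is the entire content of Bass' theorem, and the mechanism you propose for it does not work as described. The bad locus $V({\cal O}_P(p+aq_0))$ may contain infinitely many maximal ideals, and its finitely many minimal primes need not be maximal; more seriously, if you replace the element by $p+a(q_0+q_1)$ so that it survives at those finitely many primes, the order ideal changes in a non-monotone way: a maximal ideal that did not contain ${\cal O}_P(p+aq_0)$ may perfectly well contain ${\cal O}_P(p+aq_0+aq_1)$. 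So the bad locus is not shrunk by your Chinese-remainder adjustment, and the process does not close up into a proof. Symptomatically, this final step uses nothing about $P$ beyond rank $\geq 1$ and never re-invokes ${\rm rank}\,P>\dim A$ (which so far entered only through Serre's theorem \ref{Serre}); but some further use of that hypothesis is indispensable, since for rank $=d$ the conclusion can fail outright: for the stably free rank-two module $P$ over $\BR[x,y,z]/(x^2+y^2+z^2-1)$ one has $P\op A\cong A^3$, so there are unimodular elements $(p,a)$ of $P\op A$, yet no $q$ with $p+aq\in \Um(P)$ because $P$ has no unimodular element at all.

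What is needed is a choice of $q$ for which a height count, rather than prime avoidance at finitely many primes chosen after the fact, forces unimodularity; note that the thesis itself does not prove \ref{1bass} (it is quoted from Bass), but its own toolkit contains the standard argument. Apply the Eisenbud--Evans theorem (\ref{EE}) to the projective module $P^*$ of rank $n>d$ and the element $(p,a)\in P^{**}\op A=P\op A$: there exists $q\in P$ such that the ideal $I={\cal O}_P(p+aq)$ satisfies $\hh I_a\geq n>d$, hence $I_a=A_a$, i.e. every prime containing $I$ contains $a$. Since $(p+aq,a)$ is still unimodular, $I+aA=A$, so no maximal ideal contains $I$ and $p+aq\in\Um(P)$; this makes your Serre reduction unnecessary. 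Your concluding string of transvections carrying $(p,a)$ to $(0,1)$, and hence the transitivity statement, is correct.
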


\begin{notation}
Let $A$ be a ring and let $A[T]$ be the polynomial algebra in one
variable $T$. We denote, by $A(T)$, the ring obtained from $A[T]$ by 
inverting all monic polynomials. For an ideal $I$ of $A[T]$ and $a\in
A$, $I(a)$ denotes the ideal $\{ f(a) : f(T)\in I  \}$ of $A$.

Let $P$ be a projective $A$-module. Then $P[T]$ denotes
the projective $A[T]$-module $P\ot_A A[T]$ and
$P(T)$ denotes the projective $A(T)$-module
$P[T]\ot_{A[T]} A(T)$.
\end{notation}

\begin{definition}
Let $B$ be a ring and let $P$ be a projective $B$-module. 
Given an element $\gf\in P^\ast$ and an element $p\in P$, we define an
endomorphism $\gf_p$ of $P$ as the composite $P\by \gf B\by p P$.
If $\gf(p)=0$, then ${\gf_p}^2=0$ and hence $1+\gf_p$ is a unipotent 
automorphism of $P$.

By a {\it transvection}, we mean an automorphism of $P$ of the form
$1+\gf_p$, where $\gf(p)=0$ and either $\gf$ is unimodular in $P^\ast$
or $p$ is unimodular in $P$. We denote by $E(P)$ the subgroup of
$\Aut(P)$ generated by all transvections of $P$. Note that $E(P)$ is a
normal subgroup of $\Aut(P)$. Also, an existence of a
transvection of $P$ pre-supposes that $P$ has a unimodular element.
\end{definition}

\begin{definition}\label{lem7}
Let $B$ be a ring and let $P$ be a projective $B$-module. An
automorphism $\sigma$ of $P$ is said to be {\it isotopic to identity},
if there exists an automorphism $\Phi(W)$ of the projective
$B[W]$-module $P[W]=P\ot B[W]$ such that $\Phi(0)$ is the identity
automorphism of $P$ and $\Phi(1)=\sigma$. Two elements 
$p_1, p_2 \in P$ are said to be {\it isotopically connected} if
there exists an automorphism $\sigma$ of $P$ such that 
$\sigma$ is isotopic to identity and $\sigma(p_1) = p_2.$
\end{definition}

\begin{remark}\label{lem8} 
Let $B$ be a ring and let $P$ be a projective $B$-module.
Let $\sigma$ be an automorphism of $P$ and let $\sigma^\ast$ be the
induced automorphism of $P^\ast$ defined by $\sigma^* (\alpha)=\alpha
\sigma$ for $\alpha \in P^*$. 

If $\sigma \in E(P)$, then $\sigma^* \in E(P^*).$
If $\sigma$ is isotopic to identity, then, so  is $\sigma^*.$ 

If $\sigma$ is unipotent, then it is
isotopic to identity. Therefore, any element of $E(P)$ is also isotopic to
identity. 

Now, suppose that $B = A[T]$ and $P= Q[T] = Q \ot_A A[T]$. Then,
since ${\rm End}_B (P)  = {\rm End}_A(Q)[T],$ we regard  
$\sigma$ as polynomial in $T$ with coefficients in ${\rm End}_A(Q)$, say
$\sigma = \theta(T)$. 
If $\theta(0)$ is the identity automorphism of
$Q$, then, since $\Phi(W) = \theta (WT)$ is an automorphism of 
$Q[T,W] = Q\ot_A A[T,W] = P\ot_B B[W]$, it follows that $\sigma$ is  
isotopic to identity.
\end{remark}
\medskip

The following lemma follows from the well known Quillen's Splitting
lemma (\cite{Quillen}, Lemma 1) and its proof is essentially contained
in (\cite{Quillen}, Theorem 1).

\begin{lemma}\label{isotopy}
Let $B$ be a ring and let $P$ be a projective $B$-module. Let $a,b\in
B$ be such that $Ba+Bb=B$. Let $\sigma$ be a $B_{ab}$-automorphism of
$P_{ab}$ which is isotopic to identity. Then $\sigma = \tau_a
\,\gt_b$, where $\tau$ is a $B_b$-automorphism of $P_b$ such that
$\tau=$ Id modulo the ideal $aB_b$ and $\gt$ is a $B_a$-automorphism
of $P_a$ such that $\gt=$ Id modulo the ideal $bB_a$.
\end{lemma}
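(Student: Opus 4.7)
The plan is to deduce the lemma from Quillen's splitting lemma (\cite{Quillen}, Lemma 1), applied to an isotopy witnessing that $\sigma$ is isotopic to the identity, and then to specialize at $W=1$. By hypothesis, there exists $\Phi(W)\in\Aut(P_{ab}[W])$ with $\Phi(0)=\text{Id}$ and $\Phi(1)=\sigma$; this is precisely the input required by the splitting lemma.

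The main step is to factor
\[
\Phi(W) \;=\; \Phi_1(W)_a\cdot\Phi_2(W)_b \quad\text{in }\Aut(P_{ab}[W]),
\]
where $\Phi_1(W)\in\Aut(P_b[W])$ satisfies $\Phi_1(W)\equiv\text{Id}\pmod{aB_b[W]}$ and $\Phi_2(W)\in\Aut(P_a[W])$ satisfies $\Phi_2(W)\equiv\text{Id}\pmod{bB_a[W]}$. To obtain the congruences (rather than a bare factorization), I would apply Quillen's construction to the comaximal pair $(a^N,b^N)$ for $N$ sufficiently large; this leaves the localizations $B_a,B_b,B_{ab}$ unchanged but forces extra factors of $a$ and $b$ into the splitting. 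Concretely, one forms $\Psi(W,W')=\Phi(W)\Phi(W')^{-1}\in\Aut(P_{ab}[W,W'])$, notes that $\Psi(W,W)=\text{Id}$ and hence $\Psi-\text{Id}=(W-W')\,N(W,W')$ for some $N\in\End(P_{ab}[W,W'])$, and then uses a relation $sa^N+tb^N=1$ to split $N$ into an ``$a$-piece'' defined over $B_b[W,W']$ and a ``$b$-piece'' defined over $B_a[W,W']$; successive approximation (or the closed-form exponential expression) then assembles the desired automorphisms $\Phi_1$ and $\Phi_2$, each identity at $W=0$.

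Specializing at $W=1$ finishes: set $\tau:=\Phi_1(1)\in\Aut(P_b)$ and $\gt:=\Phi_2(1)\in\Aut(P_a)$. The factorization yields $\sigma=\Phi(1)=\tau_a\cdot\gt_b$, and the polynomial congruences descend to $\tau\equiv\text{Id}\pmod{aB_b}$ and $\gt\equiv\text{Id}\pmod{bB_a}$, which is the conclusion sought. The main obstacle in this plan is the splitting step itself, specifically tracking the congruences modulo $a$ and $b$ rather than settling for an abstract factorization; passing to $a^N,b^N$ is the standard device that handles this, but one must check that the product $\Phi_1(W)_a\Phi_2(W)_b$ really reconstitutes $\Phi(W)$ on the common localization $P_{ab}[W]$, which is where the noetherian-free, patching nature of Quillen's argument does the work.
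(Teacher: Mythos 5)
Your proposal is correct and matches the paper's intended derivation: the paper gives no independent argument but simply invokes Quillen's splitting lemma, whose proof (applied to the isotopy $\Phi(W)$ with $\Phi(0)=\mathrm{Id}$ and then specialized at $W=1$) yields exactly the factorization $\sigma=\tau_a\,\gt_b$ with the stated congruences. Your remark that the congruences modulo $aB_b$ and $bB_a$ come from Quillen's construction (via the relation $sa^N+tb^N=1$ and clearing denominators with high powers), rather than from the bare statement of his Lemma 1, is precisely the point the paper alludes to by citing the proof of Quillen's Theorem 1.
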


The following result is due to Bhatwadekar and Roy 
(\cite{Bh-Roy}, Proposition 4.1) and is about lifting an automorphism
of a projective module.

\begin{proposition}\label{trans}
Let $B$ be a ring and let $P$ be a projective $B$-module. Let $I\subset
B$ be an ideal. Then, any transvection $\ol \Phi$ of $P/IP$ (i.e. $\ol
\Phi \in E(P/IP)$) can be lifted to an
automorphism $\Phi$ of $P$.
\end{proposition}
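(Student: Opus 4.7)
The plan is to write $\ol{\Phi} = 1 + \ol{\gf}_{\ol{p}}$ with $\ol{\gf}(\ol{p}) = 0$ and one of $\ol{p}, \ol{\gf}$ unimodular. Since $E(P/IP)$ is generated by such transvections and products of automorphism lifts lift the product, it will suffice to handle a single generating transvection. By duality --- applying the argument to $P^{*}$ and using the canonical $P \cong P^{**}$, which swaps the roles of $\ol{p}$ and $\ol{\gf}$ --- we may assume $\ol{p}$ is unimodular in $P/IP$. Pick $\ol{\gb} \in (P/IP)^{*}$ with $\ol{\gb}(\ol{p}) = 1$ and, using projectivity of $P$, lift $\ol{p}, \ol{\gf}, \ol{\gb}$ to $p \in P$ and $\gf, \gb \in P^{*}$. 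Set $j := \gb(p) - 1 \in I$ and $a := \gf(p) \in I$.

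The key observation driving the construction is that $\Spec B = D(j) \cup D(1+j)$, because $1 = (1+j) - j$. On $D(1+j)$, where $1+j$ is a unit, put $\gf' := \gf - \tfrac{a}{1+j}\gb$ so that $\gf'(p) = 0$; then $\Phi' := 1 + \gf'_{p}$ is an automorphism of $P_{1+j}$ with inverse $1 - \gf'_{p}$ (since $(\gf'_{p})^{2} = \gf'(p)\gf'_{p} = 0$), and $\Phi'$ reduces to $\ol{\Phi}$ modulo $I$ because $a \in I$. On $D(j)$, where $j \in I$ is a unit, we have $I B_{j} = B_{j}$ and hence $P_{j}/I P_{j} = 0$; thus $\ol{\Phi}$ restricts to the identity on the zero module there, and the identity $1_{P_{j}}$ vacuously lifts it.

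For the patching step, $\Phi'$ is isotopic to the identity via $W \mapsto 1 + W \gf'_{p}$ (still an automorphism of $P_{1+j}[W]$ because $(W\gf'_{p})^{2} = 0$). Applying the Quillen splitting lemma (Lemma~\ref{isotopy}) with $a = j$ and $b = 1+j$, we obtain $\tau \in \Aut(P_{1+j})$ with $\tau \equiv 1 \pmod{jB_{1+j}}$ and $\gt \in \Aut(P_{j})$ with $\gt \equiv 1 \pmod{(1+j)B_{j}}$ such that $\Phi' = \tau \cdot \gt$ in $\Aut(P_{j(1+j)})$. Define $\Phi$ by $\Phi|_{D(1+j)} := \tau^{-1}\Phi'$ and $\Phi|_{D(j)} := \gt$; by construction these agree on the overlap $D(j(1+j))$ and glue to a global automorphism $\Phi$ of $P$. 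Since $\tau \equiv 1 \pmod{I}$ (as $j \in I$), this $\Phi$ reduces to $\ol{\Phi}$ on $D(1+j)$; on $D(j)$ the required congruence is vacuous since $P_{j}/I P_{j} = 0$. So $\Phi$ is the desired lift.

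The main obstacle I expect is spotting the correct open cover $\Spec B = D(j) \cup D(1+j)$. The defect $j = \gb(p) - 1 \in I$ that obstructs a naive direct lift is itself the crucial tool: it cancels on $D(1+j)$ (by inverting $1+j$) and trivializes $I$ on $D(j)$ (since $j \in I$ becomes a unit there). Once this dual role of $j$ is noticed, the Quillen splitting lemma mechanically assembles the two local lifts into the required global automorphism; without this cover, one is reduced to an iterative correction $\gf \mapsto \gf - \gf(p)\gb$ that only drives $\gf(p)$ into $I^{n}$ for arbitrary $n$, never exactly to zero.
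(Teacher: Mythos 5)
Your proof is correct, but there is in fact no in-paper argument to measure it against: the thesis states Proposition~\ref{trans} as a quoted result of Bhatwadekar and Roy (\cite{Bh-Roy}, Proposition 4.1) and supplies no proof, so what you have written is a self-contained substitute for the citation. Your route reduces, via multiplicativity of lifts and the duality $P\cong P^{**}$, to a single transvection $1+\ol\gf_{\ol p}$ with $\ol p$ unimodular in $P/IP$, and then produces the lift by patching over $\Spec B=D(j)\cup D(1+j)$ with $j=\gb(p)-1\in I$: the corrected functional $\gf'=\gf-\tfrac{a}{1+j}\gb$ satisfies $\gf'(p)=0$, so $\Phi'=1+\gf'_p$ is a unipotent automorphism of $P_{1+j}$ reducing to $\ol\Phi$ (because $a=\gf(p)\in I$), while over $D(j)$ one has $P_j/IP_j=0$ and any automorphism lifts; the explicit isotopy $1+W\gf'_p$ makes Lemma~\ref{isotopy} applicable over $B_{j(1+j)}$, and gluing $\tau^{-1}\Phi'$ with $\gt$ gives a global $\Phi$ whose reduction agrees with $\ol\Phi$ since $\tau\equiv \mathrm{Id}$ modulo $jB_{1+j}\subset IB_{1+j}$ and $\overline{1+j}=1$ in $B/I$, so the check on $D(j)$ is vacuous. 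I verified the order of factors against the statement of Lemma~\ref{isotopy} ($\sigma=\tau_a\,\gt_b$ with $a=j$, $b=1+j$), and the overlap condition $\tau_j^{-1}(\Phi')_j=\gt_{1+j}$ does hold, so the gluing is legitimate. Two points you pass over quickly are harmless under the paper's standing hypothesis that all modules are finitely generated: the duality step needs the canonical identifications $(P/IP)^{*}\cong P^{*}/IP^{*}$ and $P\cong P^{**}$ compatibly with reduction modulo $I$, and the gluing uses that endomorphisms of the finitely generated projective module $P$ patch over the cover $D(j)\cup D(1+j)$. Compared with the direct module-theoretic argument in the cited source, your proof buys brevity and stays entirely within results already stated in Chapter 1 (Lemma~\ref{isotopy}, Remark~\ref{lem8}), at the cost of invoking the Quillen splitting machinery for a statement that admits a more elementary proof; since Lemma~\ref{isotopy} is independent of Proposition~\ref{trans}, there is no circularity.
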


The following result is a consequence of a theorem of Eisenbud and Evans
as stated in (\cite{P}, p. 1420).

\begin{theorem}\label{EE}
Let $A$ be a ring and let $P$ be a projective $A$-module of rank
$r$. Let $(\alpha,a) \in (P^\ast \oplus A)$. Then, there exists an
element $\beta \in P^\ast$ such that $\hh I_a \geq r$, where
$I=(\alpha+a \gb)(P)$. In particular, if the ideal $(\ga(P),a)$ has
height $\geq r$, then $\hh I \geq r$. Further, if $(\ga(P),a)$ is an
ideal of height $\geq r$ and $I$ is a proper ideal of $A$, then $\hh I
= r$.
\end{theorem}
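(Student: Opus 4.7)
The plan is to prove the three assertions in order: the first (existence of $\gb$ with $\hh I_a \geq r$) is the main content, and the other two are quick consequences. The key reformulation for the main assertion is that, since $P$ is finitely generated projective, the natural isomorphism $P^\ast \ot_A \p \cong \p P^\ast$ gives $\p P^\ast = \Hom(P, \p)$. Hence $I = (\alpha + a\gb)(P) \subset \p$ if and only if $\alpha + a\gb \in \p P^\ast$, equivalently the image of $\alpha + a\gb$ in $(P/\p P)^\ast$ vanishes. For a prime $\p$ with $a \notin \p$, this becomes $\bar\alpha = -\bar a \bar\gb$ in $(P/\p P)^\ast$; since $A/\p$ is a domain and $\bar a \neq 0$, multiplication by $\bar a$ on the torsion-free $(A/\p)$-module $(P/\p P)^\ast$ is injective, so this pins down $\bar\gb$ modulo $\p P^\ast$ uniquely, when a solution exists at all.

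My plan is then to invoke the Eisenbud--Evans ``basic element'' technique as formulated by Plumstead: the pair $(\alpha, a) \in P^\ast \op A$ is automatically basic at every $\p$ with $a \notin \p$, and the theorem produces $\gb \in P^\ast$ such that $\alpha + a\gb$ is itself basic in $P^\ast$ at every such $\p$ of height $< r$, i.e.\ $(\alpha + a\gb)(P) \not\subset \p$. This is exactly the statement $\hh I_a \geq r$. The principal obstacle is the reduction to a finite controlling set of primes, typically arising as the associated primes of an appropriate quotient of $P^\ast$, or the minimal primes of $\alpha(P)$ of height $< r$ not containing $a$; once this finiteness is in hand, prime avoidance in $P^\ast$ yields the required $\gb$, because each $\p P^\ast$ is a proper submodule of $P^\ast$ (as $(P/\p P)^\ast$ is projective of rank $r > 0$ over $A/\p$, hence nonzero). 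The fact that we only have to beat primes of height $< r$, together with $\mbox{rk}(P^\ast) = r$, is precisely what makes the basic element technique applicable.

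The second assertion is immediate from the identity $(I, a) = (\alpha + a\gb)(P) + aA = \alpha(P) + aA = (\alpha(P), a)$, valid for every $\gb$: under the hypothesis $\hh(\alpha(P), a) \geq r$, any prime containing both $I$ and $a$ has height $\geq r$ by this identity, while any prime containing $I$ but not $a$ has height $\geq r$ by the first assertion, giving $\hh I \geq r$. For the last assertion, $I$ is the image of $\phi = \alpha + a\gb \in P^\ast$; localizing at any minimal prime $\p$ of $I$ makes $P_\p \cong A_\p^r$, so $I_\p$ is generated by $r$ elements, and Krull's generalized principal ideal theorem gives $\hh \p \leq r$. Combined with $\hh I \geq r$ and $I$ proper we conclude $\hh I = r$.
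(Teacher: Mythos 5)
Your proposal is correct and takes essentially the same route as the paper: the paper states this theorem without proof, as a consequence of the Eisenbud--Evans basic element theorem in Plumstead's formulation (\cite{P}, p. 1420), which is exactly what you invoke for the existence of $\beta$. Your supplementary derivations of the remaining assertions --- the identity $I+(a)=\alpha(P)+(a)$ together with localization at $a$ for $\hh I\geq r$, and Krull's height theorem applied to the $r$-generated ideal $I_{\p}$ at a minimal prime $\p$ of $I$ for $\hh I = r$ --- are correct and are the intended routine arguments.
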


The following result is due to Lindel (\cite{Lindel-1}, Theorem 2.6).
      
\begin{theorem}\label{lindel}
Let $B$ be a ring of dimension $d$
and $A = B[T_1,\ldots,T_n]$. Let $P$ be a projective $A$-module of
rank $\geq$ max $(2,d+1)$. Then $E(P\oplus A)$ acts transitively on
the set of unimodular elements of $P\oplus A$.
\end{theorem}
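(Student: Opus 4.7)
The natural strategy is induction on $n$. For $n=0$ one has $A=B$ with $\dim A=d$ and $\mathrm{rank}\,P\geq d+1>d$, so Bass's theorem (\ref{1bass}) settles the base case. The challenge is how to climb from $n-1$ to $n$ while keeping the rank bound tied to $\dim B$ rather than to $\dim A$. The overall plan is to specialise at $T_n=0$, apply the inductive hypothesis there, and then transport the result back up to $A$ via a Quillen-type local-global argument.

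More concretely, let $A'=B[T_1,\dots,T_{n-1}]$ and let $(p,a)\in\Um(P\oplus A)$. Specialisation at $T_n=0$ sends $(p,a)$ to $(p(0),a(0))\in\Um(P(0)\oplus A')$, where $P(0):=P/T_n P$ is projective over $A'$ of the same rank. The inductive hypothesis produces $\bar\sigma\in E(P(0)\oplus A')$ sending $(p(0),a(0))$ to $(0,1)$; using Proposition \ref{trans} to lift the generating transvections of $\bar\sigma$ one by one, I obtain $\sigma\in E(P\oplus A)$ whose value at $T_n=0$ is $\bar\sigma$. Replacing $(p,a)$ by $\sigma(p,a)$, I may therefore assume $(p,a)\equiv(0,1)\pmod{T_n}$.

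Having achieved this reduction, the remaining task is to show that any $(p,a)\in\Um(P\oplus A)$ with $(p(0),a(0))=(0,1)$ is $E(P\oplus A)$-equivalent to $(0,1)$. The idea is that modulo $T_n$ the two unimodular elements coincide, so their ``difference'' is locally nilpotent along $T_n$ once we work at maximal ideals of $B$. Here I apply a Quillen--Suslin style local-global principle for the elementary action on $\Um(P\oplus A)$: it is enough to check the statement after localising $B$ at each maximal ideal. Locally, the required automorphism can be built out of unipotent automorphisms of the form $1+T_n\varphi_q$ (cf.\ Remark \ref{lem8}), which are isotopic to the identity; Quillen's splitting lemma (\ref{isotopy}) then patches them into a single global element of $E(P\oplus A)$.

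The principal obstacle is establishing the local-global principle that drives the second half. When $P$ is free the statement is classical (Suslin), but in our setting $P$ need not even be extended from $B$, so the Quillen-patching argument has to be carried out intrinsically for $E(P\oplus A)$, using the fact that transvections of $P\oplus A$ survive suitable localisations and that Proposition \ref{trans} lifts transvections from quotients. The rank hypothesis $r\geq\max(2,d+1)$ is needed on two counts: the bound $r\geq d+1$ is exactly what feeds the inductive base via Bass's theorem, while $r\geq 2$ ensures that $P\oplus A$ carries enough unimodular elements (and hence transvections) for the patching to have room to manoeuvre.
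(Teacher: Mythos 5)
The paper does not prove this statement at all: it is quoted as a known theorem of Lindel (\cite{Lindel-1}, Theorem 2.6), so your sketch has to stand on its own, and as written it has genuine gaps at both of its hinges. In the inductive step you need the lift $\sigma$ of $\bar\sigma\in E(P(0)\oplus A')$ to lie in $E(P\oplus A)$, but Proposition \ref{trans} only lifts a transvection of the quotient module to an \emph{automorphism} of $P\oplus A$, not to an element of $E(P\oplus A)$; one cannot in general lift a transvection $1+\bar\gf_{\bar q}$ to a transvection, because unimodular elements of $(P\oplus A)/T_n(P\oplus A)$ need not lift to unimodular elements and $P$ is not extended from $A'$. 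So after your reduction the automorphism carrying $(p,a)$ to an element congruent to $(0,1)$ mod $T_n$ is only known to lie in $\Aut(P\oplus A)$, which is precisely not the conclusion the theorem demands.

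The second half is more seriously incomplete. Localising $B$ at a maximal ideal $\mm$ does not reduce you to the free case: $B_{\mm}$ is local, but projective modules over $B_{\mm}[T_1,\ldots,T_n]$ are not known to be free for a general Noetherian local ring (this is the Bass--Quillen problem), so ``when $P$ is free the statement is classical (Suslin)'' does not dispose of the local statement. Even granting freeness, the local statement you need is Suslin's theorem that the elementary group acts transitively on unimodular rows of length $\geq \max(3,d+2)$ over $B_{\mm}[T_1,\ldots,T_n]$; here the rank bound is tied to $\dim B$ and is in general far below $\dim B_{\mm}[T_1,\ldots,T_n]=\dim B_{\mm}+n$, so Bass's theorem (\ref{1bass}) does not apply, and the claim that the required automorphism ``can be built out of unipotents $1+T_n\gf_q$'' is an assertion of that deep theorem, not an argument for it. Likewise the Quillen-type local-global principle for the orbit of $E(P\oplus A)$ on $\Um(P\oplus A)$ — which you yourself identify as the principal obstacle — is a substantive result that your sketch assumes rather than proves. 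Lindel's actual proof proceeds by a different and much more elaborate route (induction governed by $\dim B$, monic-polynomial and patching techniques in the spirit of Plumstead and Bhatwadekar--Roy applied to not-necessarily-extended modules), and nothing in the proposal substitutes for that machinery.
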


Now, we quote a result of Mandal (\cite{Mandal-3}, Theorem 2.1).

\begin{theorem}\label{mand}
Let $A$ be a ring and let $I\subset A[T]$ be an ideal containing a
monic polynomial. Let $P$ be a projective $A$-module of rank
$n\geq \dim (A[T]/I) +2$. Let $\phi : P[T] \surj I/(I^2T)$ be a
surjection. Then $\phi$ can be lifted to a surjection $\Phi :P[T]
\surj I$.
\end{theorem}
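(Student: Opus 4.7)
The strategy is to lift $\phi$ to an arbitrary map $\Phi_0 : P[T] \to I$ via projectivity, and then to modify $\Phi_0$ by adding a map with values in $I^2T$ so as to enlarge its image to $I$. Any such modification preserves the lift of $\phi$, so the problem reduces to producing the correction term; the monic polynomial in $I$ is the essential tool for constructing it.

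Since $P[T]$ is projective, there is an $A[T]$-linear lift $\Phi_0 : P[T] \to A[T]$ of $\phi$ followed by the inclusion $I/(I^2T) \hookrightarrow A[T]/(I^2T)$; its image lies in $I + I^2T = I$, so $\Phi_0 : P[T] \to I$. Setting $J = \Phi_0(P[T])$, we have $J + I^2T = I$. Applying Nakayama's lemma (\ref{Nakayama}) to the finitely generated $A[T]$-module $I/J$ (which satisfies $(IT)(I/J) = I/J$ since $I = J + I^2T$), there exists $z \in I$ with $(1+Tz)(I/J) = 0$. Writing $s = 1+Tz$, we obtain $sI \subset J$ and $s(0) = 1$, so $s$ and $T$ are comaximal in $A[T]$, and in particular $J_s = I_s$.

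The task reduces to finding $\Psi : P[T] \to I^2T$ with $J + \Psi(P[T]) = I$, equivalently, making the composite $P[T] \by{\Psi} I^2T \surj I/J$ surjective. Since $I$ contains a monic $f(T)$, the ring $A[T]/I$ is a finite $A$-algebra, and the surjection induced by $\phi$ onto $I/I^2$ shows that $I/I^2$ is generated by $n$ elements. Mohan Kumar's theorem (\ref{<}) therefore produces a projective $A[T]$-module $Q$ of rank $n$ with trivial determinant together with a surjection $Q \surj I$. The plan is to transport this surjection to $P[T]$ and patch it against $\Phi_0$ along the comaximal pair $(s, T)$ via Quillen's splitting lemma (\ref{isotopy}), using Bass's theorem (\ref{1bass}) and Lindel's theorem (\ref{lindel}) together with the lifting of transvections (\ref{trans}) to realize the necessary change of basis.

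The main obstacle is arranging for the patching correction to lie in $I^2T$ rather than merely in $I$, so that the lift of $\phi$ is preserved; this amounts to constructing elementary automorphisms of $P[T]$ that are congruent to the identity modulo $I^2T$. Their existence is enabled precisely by the dimension hypothesis $n \geq \dim(A[T]/I) + 2$, which produces enough unimodular elements via the Eisenbud--Evans theorem (\ref{EE}) and the transitive action of elementary subgroups on unimodular elements.
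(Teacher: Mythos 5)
Note first that the thesis does not prove this statement at all: it is quoted verbatim from Mandal (\cite{Mandal-3}, Theorem 2.1) in the preliminaries, so your argument has to stand entirely on its own. It does not yet do so. Your opening step is fine (lift $\phi$ to $\Phi_0:P[T]\ra I$, set $J=\Phi_0(P[T])$, and use Nakayama with the ideal $IT$ to get $s=1+Tz$, $z\in I$, with $sI\subset J$ and $J_s=I_s$), but the very next sentence contains the fatal gap: you claim the task ``reduces to finding $\Psi:P[T]\ra I^2T$ with $J+\Psi(P[T])=I$.'' Any lift of $\phi$ has the form $\Phi_0+\Psi$ with $\Psi(P[T])\subset I^2T$, and the image of $\Phi_0+\Psi$ is \emph{not} $J+\Psi(P[T])$: the image of a sum of two maps is not the sum of their images. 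Surjectivity of the composite $P[T]\ra I^2T\ra I/J$ is necessary but nowhere near sufficient for $\Phi_0+\Psi$ to be onto $I$, and if this reduction were valid the theorem would be essentially formal and would not need the hypothesis $n\geq \dim(A[T]/I)+2$. This conflation is precisely the difficulty that the theorem (and all the machinery in this thesis, e.g.\ (\ref{mand1}), (\ref{3.2}), (\ref{sub})) exists to overcome.

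The second half is a plan rather than an argument, and its ingredients do not fit together. The claim that $\phi$ shows $I/I^2$ is generated by $n$ elements is unjustified when $P$ is not free: a quotient of a projective module of rank $n$ need not be globally $n$-generated. Even granting an appeal to Mohan Kumar's theorem (\ref{<}), it only yields a surjection $Q\surj I$ from \emph{some} rank-$n$ projective $A[T]$-module $Q$ with trivial determinant; $Q$ has no stated relation to $P[T]$ (whose determinant need not be trivial), and nothing is said about why such a surjection could be ``transported'' to $P[T]$, let alone why the transported surjection would agree with $\Phi_0$ modulo $I^2T$ --- which is the whole content of being a lift of $\phi$. Finally, your last paragraph names the genuine obstacle (producing automorphisms congruent to the identity modulo $I^2T$ and patching over the comaximal pair $(s,T)$ via (\ref{isotopy})) and asserts that (\ref{EE}), (\ref{1bass}), (\ref{lindel}) and (\ref{trans}) ``enable'' it, but no construction is given. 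In genuine proofs of this result these are exactly the steps that carry all the weight: one passes to a quotient by a monic polynomial lying in the correcting ideal, controls the dimension of that quotient, and corrects the map by elements of the ideal before patching, as in (\ref{mand1}) and (\ref{3.2}). As written, the proposal identifies the right tools but proves nothing beyond the first (easy) localization step.
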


\begin{definition}
Let $A$ be a  local ring of dimension $d$ with unique
maximal ideal $\mm$. If $\mm$ is generated by $d$ elements, then $A$
is said to be a {\it regular local ring}. A ring $B$ is
called {\it regular} if $B_{\mm}$ is a regular local ring for
every maximal ideal $\mm$ of $B$. A local ring $A$ is called a
$k$-{\it spot} if it is a localization of an affine $k$-algebra. 
\end{definition}
\medskip

The following result is due to Mandal and Varma 
(\cite{Mandal-Varma}, Theorem 4).

\begin{theorem}\label{M-V}
Let $A$ be a regular $k$-spot, where $k$ is an infinite perfect
field. Let $I\subset A[T]$ be an ideal of height $\geq 4$ and let $n$
be an integer such that $n\geq \dim (A[T]/I) +2$. Let $f_1,\ldots,f_n \in
I$ be such that $I=(f_1,\ldots,f_n)+(I^2T)$. Assume that  
$I(0)$ is a complete intersection ideal of $A$ of height $n$ or
$I(0) = A$. Then $I= (F_1,\ldots,F_n)$ with $F_i -f_i \in (I^2T)$. 
\end{theorem}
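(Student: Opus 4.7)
The plan is to reduce to Mandal's Theorem~\ref{mand}, which settles the analogous lifting question when the ideal contains a monic polynomial in $T$. First, evaluating the hypothesis $I=(f_1,\ldots,f_n)+I^2T$ at $T=0$ yields
$$I(0)=(f_1(0),\ldots,f_n(0)),$$
since $I^2T$ specializes to zero. Hence in Case I the $f_i(0)$ generate the complete-intersection ideal $I(0)$ of height $n$, and in Case II they form a unimodular row in $A^n$.

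Next, since $A$ is a regular local ring (being a $k$-spot with $k$ infinite perfect) and $n\geq 2$, elementary transformations in $E_n(A)\subseteq E_n(A[T])$ act transitively on $\Um_n(A)$. In Case II I would use this to reduce to $f_1(0)=1$ and $f_i(0)=0$ for $i\geq 2$, so that $f_1=1+Tg$ for some $g\in A[T]$; this does not alter the ideal $(f_1,\ldots,f_n)$. In Case I, the regular-sequence structure of $(f_1(0),\ldots,f_n(0))$, which relies on the regularity of $A$, provides the analogous normalization.

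The main step is a patching argument to reduce to the monic case. Using regularity of $A$, the infinite perfect residue field (which permits Bertini-type generic choices on the underlying $k$-variety), the height hypothesis $\hh I\geq 4$, and the numerical hypothesis $n\geq\dim(A[T]/I)+2$ (needed for Eisenbud--Evans moves, Theorem~\ref{EE}), I would construct an auxiliary ideal $K\subseteq A[T]$ with: (i) $I+K=A[T]$, (ii) $I\cap K$ contains a monic polynomial in $T$, and (iii) the given surjection $\phi:A[T]^n\to I/(I^2T)$ extends to a compatible surjection $A[T]^n\to(I\cap K)/((I\cap K)^2T)$. By the Chinese Remainder Theorem coming from (i), condition (iii) amounts to specifying a surjection $A[T]^n\to K/(K^2T)$ that matches $\phi$ upon restriction to the common quotient; compatibility at $T=0$ is guaranteed by our hypothesis on $I(0)$, which in Case I supplies a regular sequence to match against and in Case II supplies the unit $f_1(0)=1$.

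Mandal's Theorem~\ref{mand} applied to $I\cap K$ (which contains a monic) then lifts the compatible surjection to $\Psi:A[T]^n\surj I\cap K$. Projecting via the CRT isomorphism $A[T]/(I\cap K)\iso A[T]/I\times A[T]/K$ yields the desired $\Phi:A[T]^n\surj I$ with coefficients $F_i:=\Phi(e_i)$ satisfying $F_i-f_i\in I^2T$. The main obstacle is the construction of $K$: arranging simultaneously for comaximality with $I$, for $I\cap K$ to contain a monic polynomial, and for the compatibility with $\phi$. This is a delicate transversality construction where the full strength of the hypotheses (regular $k$-spot, infinite perfect $k$, $\hh I\geq 4$, and $n\geq\dim(A[T]/I)+2$) becomes essential.
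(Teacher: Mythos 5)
You should first note that the thesis does not prove Theorem \ref{M-V} at all: it is stated in the preliminaries without proof, quoted from Mandal and Varma \cite{Mandal-Varma}, so there is no in-paper argument to compare yours against and it must be judged on its own merits. Judged so, the central step of your plan fails for a structural reason. You require an auxiliary ideal $K$, comaximal with $I$, such that $I\cap K$ contains a monic polynomial in $T$. But $I\cap K\subset I$, so any monic polynomial in $I\cap K$ already lies in $I$; hence such a $K$ can exist only when $I$ itself contains a monic polynomial. That case is already settled by Mandal's theorem (\ref{mand}) with no regularity hypotheses whatsoever, and it is not the essential content of \ref{M-V}: for instance, if $J=(a_1,\ldots,a_n)\subset A$ is a complete intersection of height $n\leq\dim A$ and $I=JA[T]$, then all hypotheses of \ref{M-V} hold but $I$ contains no monic polynomial, so your $K$ cannot be constructed. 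The regular $k$-spot, infinite perfect field, and $\hh I\geq 4$ hypotheses exist precisely to handle ideals for which no monic polynomial is available, so no amount of ``delicate transversality'' can rescue the construction as stated.

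The final step is also unfounded. From a surjection $\Psi:A[T]^n\surj I\cap K$ one cannot ``project via the CRT isomorphism'' to a surjection onto $I$: CRT identifies the quotient ring $A[T]/(I\cap K)$ with $A[T]/I\times A[T]/K$, but it does not convert generators of $I\cap K$ into generators of $I$, and $I\cap K$ is strictly smaller than $I$ whenever $K$ is proper. Passing from a surjection onto $I\cap K$, together with compatible data modulo $K$, to a surjection onto $I$ is exactly what the Subtraction principle (\ref{subtraction}) accomplishes, and that is a substantial theorem with its own hypotheses and proof --- it cannot be absorbed into a one-line CRT remark. The genuine proof in \cite{Mandal-Varma} (and the thesis's generalization \ref{general}, built on the moving lemma \ref{move} and the subtraction principle) is organized in the opposite direction: one arranges the \emph{residual} ideal, not $I\cap K$ itself, to have the good property, solves the problem there, and subtracts. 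Your preliminary observations are fine ($I(0)=(f_1(0),\ldots,f_n(0))$, and the normalization in the case $I(0)=A$ using that $E_n(A)$ acts transitively on $\Um_n(A)$ over a local ring), but the core of the argument does not work.
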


The following result is a variant of (\cite{Bh-1}, Proposition
3.1). We give a proof for the sake of completeness.

\begin{proposition}\label{m-phil}
Let $B$ be a ring and let $I\subset B$ be an ideal of height $n$. 
Let $f \in B$ be such that it is not a zero divisor modulo $I$. 
Let $P=P_1\op B$ be a projective $B$-module of rank $n$. Let 
$\alpha : P \ra I$ be a linear map such that the induced map
$\alpha_f : P_f \surj I_f$ is a surjection.
Then, there exists $\Psi \in E({P_f}^*)$ such that

(1) $\beta = \Psi(\alpha) \in P^*$  and 

(2) $\beta (P)$ is an ideal of
$B$ of height $n$ contained in $I$.
\end{proposition}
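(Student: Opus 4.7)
Write $P = P_1 \oplus B$ and $\alpha = (\alpha_1, a) \in P_1^* \oplus B \cong P^*$ with $a = \alpha(0,1) \in I$ (the containment holding since $\alpha(P) \subset I$).

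The cornerstone of my approach is that every $\Psi \in E(P_f^*)$ preserves the image ideal over $B_f$. Indeed, for a single transvection $\Psi = 1 + \phi_p$, the new functional satisfies $\Psi(\theta)(x) = \theta(x) + \theta(\phi)\, p(x)$, which lies in the ideal $\theta(P_f) \subset B_f$ because $\theta(\phi) \in \theta(P_f)$; applying $\Psi^{-1}$ gives the reverse inclusion. Thus, whenever $\beta := \Psi(\alpha) \in P^* \subset P_f^*$ for some $\Psi \in E(P_f^*)$, we get $\beta_f(P_f) = \alpha_f(P_f) = I_f$. Since $f$ is a non-zero-divisor modulo $I$ we have $I_f \cap B = I$, so $\beta(P) \subset I$ is automatic, and every minimal prime of $\beta(P)$ outside $V(f)$ must contain $I$ and hence has height $\geq n$. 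The proposition reduces to choosing $\Psi$ so that $\beta \in P^*$ and no minimal prime of $\beta(P)$ inside $V(f)$ has height $< n$.

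I build $\Psi$ from two families of transvections. The first family lies already in $E(P^*)$: it uses the unimodular element $\phi = (0,1) \in P$ paired with $p = (q_1, 0) \in P^*$, and acts by $(\alpha_1, a) \mapsto (\alpha_1 + a q_1, a)$. The second family genuinely uses the $E(P_f^*)$ freedom: it employs the unimodular element $p = (0, 1/f) \in P_f^*$ (unimodular because $1/f \in B_f^\times$) paired with $\phi = (\psi_1, 0) \in P$ where $\psi_1 \in \alpha_1^{-1}(fB)$, acting by $(\alpha_1, a) \mapsto (\alpha_1,\, a + \alpha_1(\psi_1)/f)$; the divisibility condition $f \mid \alpha_1(\psi_1)$ ensures the resulting functional lands in $P^*$. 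Apply Theorem \ref{EE} to $P_1$ of rank $n-1$ with the pair $(\alpha_1, fa) \in P_1^* \oplus B$, producing $\eta \in P_1^*$ such that $J := (\alpha_1 + fa\eta)(P_1)$ satisfies $\hh J_{fa} \geq n-1$. Composing the first-family transvection with $q_1 = f\eta$ (giving the intermediate $(\alpha_1 + fa\eta, a)$) with a second-family transvection for a suitable $\psi_1$, I obtain $\beta = (\alpha_1 + fa\eta,\, a + s) \in P^*$, where $s := (\alpha_1(\psi_1) + fa\eta(\psi_1))/f \in B$.

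For the height verification: outside $V(f)$, every minimal prime of $\beta(P) = J + (a+s)B$ has height $\geq n$ by the dichotomy. Inside $V(f)$, a minimal prime $\mathfrak{p}$ of $\beta(P)$ contains $f$, $J$, and $a+s$; any minimal prime $\mathfrak{q}$ of $J$ contained in $\mathfrak{p}$ and not containing $fa$ has height $\geq n-1$ by the EE bound. Choosing $\psi_1$ via prime avoidance so that $a + s$ lies in no such $\mathfrak{q}$ of height exactly $n-1$ forces $\mathfrak{p} \supsetneq \mathfrak{q}$, hence $\hh \mathfrak{p} \geq n$.

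The main technical obstacle is the prime-avoidance step: one must simultaneously arrange $\psi_1 \in \alpha_1^{-1}(fB)$ (so the transvection stays in $P^*$) and $a + s \notin \mathfrak{q}$ for each of the finitely many minimal primes $\mathfrak{q}$ of $J$ of height $n-1$ inside $V(f)$, while respecting $\beta(P) \subset I$. The required flexibility comes from the hypothesis $\hh I = n$ (bounding the number of primes to avoid) together with the surjectivity $\alpha_f(P_f) = I_f$ (ensuring the colon ideal $(\alpha_1(P_1) : f)$ supplies enough candidate values of $s$).
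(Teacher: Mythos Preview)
Your two families of transvections are the right building blocks, and the reduction to primes in $V(f)$ is correct. The gap is in the height verification there. You argue via a minimal prime $\mq$ of $J$ contained in $\p$ with $fa\notin\mq$, concluding $\hh\mq=n-1$ and then $\hh\p\geq n$ once $a+s\notin\mq$. But you never establish that such a $\mq$ exists: it may happen that \emph{every} minimal prime of $J$ inside $\p$ contains $fa$ (hence $f$), and those can have height strictly below $n-1$. In that situation, even if $a+s\notin\mq$ so that $\p\supsetneq\mq$, you only get $\hh\p\geq\hh\mq+1$, which can be far below $n$. Your prime-avoidance step, even if carried out in full, only treats the height-$(n-1)$ minimal primes of $J$ and so cannot close this gap. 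Concretely, take $B=k[x,y,z,w]$, $n=3$, $f=x$, $I=(y,z,w)$, $P_1=B^2$, $\alpha_1(b_1,b_2)=xyb_1+xzb_2$, $a=w$. Then $\alpha_1(P_1)\subset(x)$, so for \emph{any} $\eta$ your $J=(\alpha_1+xw\eta)(P_1)=x\cdot(y+w\eta_1,z+w\eta_2)\subset(x)$; every admissible $s$ lies in $(J:x)=(y+w\eta_1,z+w\eta_2)\subset(y,z,w)$, so $w+s$ has zero constant term and is a non-unit modulo $x$. Hence $\beta(P)\subset(x,w+s)$ with $\hh(x,w+s)\leq 2$, and no choice of $\eta,\psi_1$ in your scheme reaches height $3$.

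The paper repairs this by iterating rather than attempting a single shot: one sets $N(\Gamma)=\hh\Gamma(\alpha)(P)$ for those $\Gamma\in E(P_f^*)$ with $\Gamma(\alpha)\in P^*$, and shows that whenever $N(\Gamma)<n$ there is a $\Gamma_1$ with $N(\Gamma_1)>N(\Gamma)$. The inductive step first applies Eisenbud--Evans with respect to the \emph{current} second coordinate $b$ (not $fa$); the key observation is then that every minimal prime of the resulting $\beta(P_1)$ containing $b$ must also contain $f$, because $(\beta(P_1),b)_f=I_f$ has height $n$. This lets one choose $x$ in all the $b$-avoiding minimal primes but none of the $b$-containing ones, so that $(xf)^r\in\beta(P_1)$; the move $b\mapsto b+x^r$ is then realized by the conjugate $\Delta^{-1}\Phi\Delta\in E(P_f^*)$ of a $B$-transvection $\Phi$, with $\Delta=\diag(1,f^r)$. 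Now $b+x^r$ avoids every minimal prime of $\beta(P_1)$, so the height strictly increases; finitely many repetitions reach $n$.
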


\begin{proof}
Note that, since $f$ is not a zero divisor modulo $I$ and 
$\alpha_f (P_f) = I_f$, if  
$\Delta$ is an automorphism of ${P_f}^*$ such that 
$ \delta = \Delta (\alpha) \in P^*$, then $\delta(P) \subset I$.  

Let $\CS$ be the set 
$\lbrace\, \Gamma \in E({P_f}^*) : \Gamma (\alpha) \in P^* \, \rbrace$.
Then $\CS \neq \varnothing$, since the identity automorphism of ${P_f}^*$
is an element of $\CS$. For $\Gamma \in \CS$, let $N(\Gamma)$ denote 
height of the ideal $\Gamma (\alpha)(P)$.
Then, in view of the above observation, it is enough to prove that there 
exists $\Psi \in \CS$ such that $N(\Psi) = n$. This is proved by showing
that for any $\Gamma \in \CS$ with $N(\Gamma) < n$, there exists
$\Gamma_1 \in \CS$ such that $N(\Gamma) < N(\Gamma_1)$.

Since $P = P_1 \oplus B$, we write $\alpha = (\gt,a)$, where 
$\gt \in {P_1}^*$ and $ a
\in B$.  Let $\Gamma \in \CS$ be such that $N(\Gamma) < n$. Let $\Gamma
((\gt,a)) =(\gb,b) \in {P_1}^* \oplus B$. Applying Eisenbud-Evans
theorem ($\ref{EE}$), there exists
$\phi \in {P_1}^\ast$ such that $\hh L_b \geq n-1$, where $L =
(\gb+b\phi)(P_1)$. It is easy to see that the automorphism $\Lambda$ of
${P_1}^* \oplus B$ defined by $\Lambda ((\delta, c)) = (\delta + c\phi,c)$
is a transvection of ${P_1}^* \oplus B $ and $\Lambda (\gb,b) =
(\gb+b\phi,b)$.  Hence $\Lambda\, \Gamma \in \CS$ and moreover
$N(\Gamma) = N(\Lambda \,\Gamma)$.  Therefore, if necessary, we can
replace $\Gamma$ by $\Lambda\, \Gamma$ and assume that if a prime ideal
$\p$ of $B$ contains $\gb(P_1)$ and does not contain $b$, then 
$\hh \p \geq n-1$. Now, we claim that $N(\Gamma) = \hh \gb(P_1)$.

We have $N(\Gamma) \leq n-1$. Since $N(\Gamma) = \hh (\gb(P_1),b)$, 
we have $\hh \gb(P_1) \leq N(\Gamma) \leq n-1$. Let $\p$ be a 
minimal prime ideal of $\gb(P_1)$ such that $\hh \p = \hh \gb(P_1)$.
If $b \notin \p$, then $\hh \p \geq n-1$. Hence, we have the
inequalities $n-1 \leq \hh \gb(P_1) \leq N(\Gamma)
\leq n-1$. This implies that $N(\Gamma) = \hh  \gb(P_1)= n-1$. 
If $b \in \p$, then $\hh \gb(P_1) = \hh \p \geq
\hh  (\gb(P_1),b) = N(\Gamma) \geq \hh \gb(P_1)$. This proves the claim.

Let $\CK$ denote the set of minimal prime ideals of
$\gb(P_1)$. Since $P_1$ is a projective $B$-module of rank $n-1$,
if $\p \in \CK$, then $\hh \p \leq n-1.$

Let $\CK_1 = \{ \p \in \CK : b \in \p \,\}$ and
let $\CK_2 = \CK - \CK_1$. Note that, since $\hh \gb(P_1) = 
\hh  (\gb(P_1),b)$, $\CK_1 \neq \varnothing$.  Moreover, every member
$\p$ of $\CK_1$ is a prime ideal of height $< n$ which contains 
$I_1 = (\gb(P_1),b).$ Therefore, since $(I_1)_f = I_f$ and $\hh I = n$, 
it follows that $f \in \p$ for all $\p \in \CK_1$.

 Since $\bigcap_{\p \in \CK_2}\p \not\subset \bigcup_{\p \in \CK_1}
\p$, there exists $x \in \bigcap_{\p \in \CK_2}\p$ such that $x \notin
\bigcup_{\p \in \CK_1} \p$. Since $f \in \p$ for all $\p \in \CK_1$, we
have $xf \in \bigcap_{\p \in \CK} \p$. This implies that $(xf)^r \in
\gb(P_1)$ for some positive integer $r$. 

Let $(x f)^r = \gb(q)$. As before, it is easy to see that the
automorphism $\Phi$ of ${P_1}^\ast \op B$ defined by $\Phi((\tau, d)) =
(\tau, d + \tau(q))$ is a transvection of ${P_1}^\ast \op B.$ Let $\Delta$
be an automorphism of $ {(P_1)_f}^* \oplus B_f $ defined by $\Delta (\eta,
c) = (\eta, f^rc)$.  Then, since $E({(P_1)_f}^\ast\op B_f)$ is a normal
subgroup of $GL({(P_1)_f}^\ast\op B_f)$, $\Phi_1 =\Delta^{-1}\, \Phi\,
\Delta$ is an element of $E({(P_1)_f}^\ast\op B_f).$ Moreover, $\Phi_1
((\gb,b)) = (\gb, b+x^r).$

Let $\Gamma_1 = \Phi_1 \,\Gamma.$ Then 
$\Gamma_1 (\alpha) = \Gamma_1 ((\gt,a)) = \Phi_1 ((\beta,b)) = (\gb, b+x^r).$
Therefore $\Gamma_1 \in \CS.$ Moreover, since $b+x^r$ does not belong to any
minimal prime ideal of $\gb(P_1),$  we have 
$N(\Gamma) = \hh \gb(P_1) < N(\Gamma_1) .$
This proves the result.  
$\hfill \square$
\end{proof}


\chapter{Subtraction Principle}

In this chapter, we prove ``Subtraction principle" (\ref{subtraction})
together with some other results for later use. Though these results are
technical in nature, they are the backbone for our main result
(\ref{general})  proved in this thesis.
We begin with the following easy lemma.

\begin{lemma}\label{010}
Let $B$ be a ring and let $I$ be an ideal of $B$. Let $K\subset I$ be
an ideal such that $I = K+I^2$. Then $I=K$ if and only if any maximal
ideal of $B$ containing $K$ contains $I$.
\end{lemma}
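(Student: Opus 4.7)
The forward direction is immediate: if $I=K$, then trivially any maximal ideal containing $K$ contains $I$. The real content lies in the converse, and my plan is as follows.

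First I would translate the hypothesis $I=K+I^2$ into a Nakayama-style statement. Working modulo $K$, the ideal $J:=I/K$ of $B/K$ satisfies $J=J^2$, because the image of $I^2$ in $B/K$ is exactly $(I^2+K)/K=I/K$. Since $B$ is Noetherian, $J$ is a finitely generated $B/K$-module with $J\cdot J=J$, so Nakayama's lemma (\ref{Nakayama}) yields $a\in J$ with $(1+a)J=0$ in $B/K$. Lifting $a$ to some $x\in I$, this says
$$(1+x)\,I \subset K.$$

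The second (and key) step is to show that the element $1+x$ is a unit modulo $K$, i.e.\ that $K+(1+x)B=B$. Here I would use the maximal ideal hypothesis in the natural way: if $K+(1+x)B$ were a proper ideal, it would be contained in some maximal ideal $\mm$ of $B$. Then $\mm\supset K$, so by hypothesis $\mm\supset I$; in particular $x\in\mm$, and combined with $1+x\in\mm$ this gives $1\in\mm$, a contradiction.

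Finally, having $k\in K$ and $b\in B$ with $1=k+(1+x)b$, I would multiply by an arbitrary element of $I$: for $y\in I$,
$$y=ky+(1+x)by,$$
where $ky\in K$ and $(1+x)by\in (1+x)I\subset K$. Hence $I\subset K$, which together with $K\subset I$ gives $I=K$. I do not anticipate any obstacle beyond organizing these three steps correctly; the only subtle point is the use of Nakayama's lemma to produce the element $x$, which is standard once one observes $I/K=(I/K)^2$.
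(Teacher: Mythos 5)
Your proof is correct and follows essentially the same route as the paper: both arguments produce an element of $I$ that acts as the identity on $I/K$ (the paper via an idempotent generator $a$ with $(1-a)I\subset K$, you via Nakayama giving $(1+x)I\subset K$) and then invoke the hypothesis that every maximal ideal containing $K$ contains $I$. The only cosmetic difference is that the paper finishes by localizing at maximal ideals, whereas you conclude globally from the comaximality $K+(1+x)B=B$; both are fine.
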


\begin{proof}
Since $I/K$ is an idempotent ideal of a Noetherian ring $B/K$ and
$I^2$ maps surjectively onto $I/K$, there exists an element $a\in
I^2$ such that $K+(a)=I$ and $a(1-a)\in K$. Therefore, $(1-a)I\subset
K$ and hence $I_{\mm}=K_{\mm}$ for every maximal ideal $\mm$ of $B$,
since any maximal
ideal of $B$ containing $K$ contains $I$. Hence $I=K$.
$\hfill \square$
\end{proof}

 
\begin{lemma}\label{002}
Let $B$ be a  ring and let $I\subset B$ be an ideal.
Let $I_1$ and $I_2$ be ideals of $B$ contained in $I$ such that
$I_2\subset I^2$ and $I_1+ I_2 = I$. Then $I=I_1 +(e)$ for some
$e\in I_2$ and $I_1 = I\cap I'$, where $I_2  +I'= B$.
\end{lemma}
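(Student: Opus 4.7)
The plan is to reduce both assertions to an idempotent decomposition in $B/I_1$, in the same spirit as the proof of Lemma \ref{010}.

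First I would pass to the quotient ring $\ol B := B/I_1$ and show that $\ol I := I/I_1$ is an idempotent ideal. Indeed, since $I_2 \subset I^2$ and $I = I_1 + I_2$, we get $I = I_1 + I^2$, i.e.\ $\ol I = \ol I^{\,2}$. Because $B$ (hence $\ol B$) is Noetherian, $\ol I$ is finitely generated, so by Nakayama's lemma applied to $\ol I = \ol I \cdot \ol I$ there exists $\ol a \in \ol I$ with $(1-\ol a)\ol I = 0$. Multiplying by $\ol a$ gives $\ol a^{\,2} = \ol a$, and the same identity $(1-\ol a)\ol I = 0$ forces $\ol I = (\ol a)$. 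Now observe that the natural map $I_2 \to \ol I$ is surjective, since $I_1 + I_2 = I$. Hence $\ol a$ lifts to an element $e \in I_2$, and by construction $I = I_1 + (e)$ while $e(1-e) \in I_1$ (because $\ol e = \ol a$ is idempotent in $\ol B$). This settles the first assertion.

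For the second assertion, the natural candidate is $I' := I_1 + B(1-e)$. Since $e \in I_2$ and $1-e \in I'$, we immediately have $1 = e + (1-e) \in I_2 + I'$, so $I_2 + I' = B$. The containment $I_1 \subset I \cap I'$ is clear, so it remains to establish $I \cap I' \subset I_1$. Given $x \in I \cap I'$, write $x = i_1 + b(1-e)$ with $i_1 \in I_1$ and $b \in B$; then $b(1-e) = x - i_1 \in I$. Reducing modulo $I_1$ gives $\ol b(1-\ol e) \in \ol I = (\ol e)$, and multiplying by $(1-\ol e)$ together with the identities $\ol e(1-\ol e)=0$ and $(1-\ol e)^2 = 1 - \ol e$ (both coming from $\ol e^{\,2}=\ol e$) yields $\ol b(1-\ol e) = 0$. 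Thus $b(1-e) \in I_1$, and hence $x \in I_1$, as desired.

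The only nontrivial step is producing the idempotent $\ol e$ in $\ol B$ and lifting it inside $I_2$; everything else is formal manipulation with the resulting idempotent. Since both $\ol I$ being idempotent and $I_2 \twoheadrightarrow \ol I$ follow directly from the hypotheses $I_2 \subset I^2$ and $I_1 + I_2 = I$, even this step is routine, so I do not expect any serious obstacle.
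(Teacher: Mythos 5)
Your proof is correct and follows the same route as the paper: exploit the idempotence of $I/I_1$ in the Noetherian ring $B/I_1$ together with the surjection $I_2 \twoheadrightarrow I/I_1$ to produce $e \in I_2$ with $I = I_1 + (e)$ and $e(1-e) \in I_1$, and then take $I' = I_1 + (1-e)$. You merely spell out the verifications that the paper leaves implicit, so there is nothing to add.
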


\begin{proof}
Since $I/I_1$ is an idempotent ideal of a Noetherian ring $B/I_1$ and
$I_2$ maps surjectively onto $I/I_1$, there exists an element $a\in
I_2$ such that $I_1+(a)=I$ and $a(1-a)\in I_1$. The result follow by
taking $I'=I_1+(1-a)$.
$\hfill \square$
\end{proof}
\medskip

The proof of the following result uses the explicit completion of the
unimodular row $[a^2,b,c]$ given by Krusemeyer \cite{Kruse}.
 
\begin{lemma}\label{krusem}
Let $B$ be a ring and let $I=(c_1,c_2)$ be an ideal of $B$. Let $b\in B$ be
such that $I+(b)=B$ and let $r$ be a positive even integer.
Then $I=(e_1,e_2)$ with $c_1 - e_1 \in I^2$ and 
$b^r c_2-e_2\in I^2$.
\end{lemma}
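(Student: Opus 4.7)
The plan is to exploit the even parity of $r$: write $r=2s$ and put $a=b^s$, so that $a^2=b^r$. Since $I+(b)=B$ we have $I+(a)=B$, hence $(a^2,c_1,c_2)=B$ and the row $[a^2,c_1,c_2]$ is unimodular over $B$. Its first entry being a perfect square, Krusemeyer's theorem applies and yields an \emph{explicit} matrix $M\in\mathrm{SL}_3(B)$ with first row $[a^2,c_1,c_2]$; write
\[
M = \begin{pmatrix} a^2 & c_1 & c_2 \\ u & \beta_1 & \beta_2 \\ w & \delta_1 & \delta_2 \end{pmatrix},\qquad \Delta = \beta_1\delta_2-\beta_2\delta_1.
\]
I will use the explicit shape of $M$, not merely the bare existence of such a completion.

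Expansion of $\det M = 1$ along the first row gives the identity $a^2\Delta+c_1\Lambda_1+c_2\Lambda_2=1$, where $\Lambda_1,\Lambda_2$ are the remaining signed $2\times2$ minors. Multiplying by $c_2$ yields $\Delta\cdot b^r c_2+c_1c_2\Lambda_1+c_2^2\Lambda_2=c_2$, so in particular $c_2\equiv\Delta\cdot b^r c_2\pmod{I^2}$, and hence $I=(c_1,b^r c_2)+I^2$. Applying Lemma~\ref{002} with $I_1=(c_1,b^r c_2)$, $I_2=I^2$ produces $e\in I^2$ with $I=(c_1,b^r c_2,e)$; since $e\in I^2=(c_1^2,c_1c_2,c_2^2)$ we may absorb the $c_1$-multiples to reduce to the form $I=(c_1,b^r c_2,c_2^2\gamma)$ for some $\gamma\in B$.

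The remaining step is to combine $b^r c_2$ and $c_2^2\gamma$ into a single generator $e_2\equiv b^r c_2\pmod{I^2}$ while preserving $(c_1,e_2)=I$. Taking $e_2 = b^r c_2 + c_2^2\xi$ for an explicit $\xi\in B$ built from the entries of $M$, the rewrite
\[
\Delta e_2 = c_2 - c_1(c_2\Lambda_1) + c_2^2(\Delta\xi-\Lambda_2)
\]
shows that a careful choice of $\xi$, together possibly with a mild modification $e_1 = c_1 + (\text{explicit element of }I^2)$, will yield $c_2\in(e_1,e_2)$, whence $I=(e_1,e_2)$ with the prescribed congruences.

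The main obstacle is precisely this final combination step: the congruence $c_2\equiv\Delta b^r c_2\pmod{I^2}$ alone is not enough, because $\Delta$ is only a unit modulo $I$ rather than in $B$ itself. The explicit polynomial form of Krusemeyer's entries (in $a,c_1,c_2$ and the coefficients of unimodularity) is what bridges this gap and makes the construction go through. The even hypothesis on $r$ enters essentially, since it expresses $b^r=(b^s)^2$ as a square and thereby makes Krusemeyer's theorem applicable; without it, the analogous statement can fail.
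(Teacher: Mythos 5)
Your proposal has a genuine gap, and it sits exactly where you acknowledge it. From the Krusemeyer completion you extract only the determinant identity, and from that only the congruence $c_2\equiv\Delta b^rc_2\pmod{I^2}$, i.e.\ $I=(c_1,b^rc_2)+I^2$. But this fact is trivial and needs no completion at all: $b^r$ is a unit modulo $I$, so $c_1,b^rc_2$ generate $I/I^2$ whenever $c_1,c_2$ do. The entire content of the lemma is the next step — passing from $I=(c_1,b^rc_2)+I^2$ to honest generators $e_1,e_2$ of $I$ with $e_1\equiv c_1$ and $e_2\equiv b^rc_2\pmod{I^2}$ — since lifting generators of $I/I^2$ to generators of $I$ is false in general (this failure is precisely what Euler class obstructions measure). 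That step you do not carry out: the ``careful choice of $\xi$'' is asserted, not produced, and nothing in your argument uses the matrix $M$ beyond its determinant, so the evenness of $r$ is not genuinely exploited.

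Moreover, the row you chose to complete makes the paper's mechanism unavailable. The paper completes $(z^2,c_1,c_2)$ where $z$ is an inverse of $b$ chosen so that $1-bz=xc_1+yc_2$ with $x,y\in I$ (possible because $b$ is a unit modulo $(c_1^2,c_2^2)$). The invertibility of the full Krusemeyer matrix is then used to transport a surjection, not to expand a determinant: the map $\Theta=(0,-c_2,c_1):B^3\twoheadrightarrow I$ kills the first row, so evaluating $\Theta$ on the other two rows gives $I=(d_1,d_2)$ with the explicit entries forcing $d_1\equiv bc_1$ and $d_2\equiv bc_2\pmod{I^2}$; a final correction by an element of $E_2(B)$ congruent to $\diag(z,b)$ modulo $I$ (liftable from $E_2(B/I)$ because $zb\equiv1$ modulo $I$) converts these into $e_1\equiv c_1$, $e_2\equiv b^2c_2$. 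If instead you complete $(b^r,c_1,c_2)=(a^2,c_1,c_2)$ as you propose, the analogous construction yields generators congruent to $\alpha c_1,\alpha c_2$ with $\alpha$ an inverse of $a$ modulo $I^2$, and the diagonal correction needed to reach $(c_1,\,b^rc_2)$ is $\diag(a,a^3)$, whose determinant $a^4$ is not congruent to $1$ modulo $I$, so it does not lift to an elementary (or, in general, invertible) matrix over $B$. To repair the proof you should complete the row built from an inverse of $b$, as the paper does, and use the completion to transport the generating set rather than only its determinant.
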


\begin{proof}
Replacing $b$ by $b^{r/2}$, we can assume that $r=2$.
Since $b$ is a unit modulo $I=(c_1,c_2)$, it is unit modulo
$({c_1}^2,{c_2}^2)$. Let $1-bz=x'{c_1}^2+y'{c_2}^2 = xc_1+yc_2$, where
$x=x'c_1\in I$ and $y=y'c_2\in I$. The unimodular row $(z^2,c_1,c_2)$
has the following Krusemeyer completion (see \cite{Kruse})
 to an invertible matrix
$\Gamma$ given by
$$ \begin{pmatrix} z^2 & c_1 & c_2 \\ -c_1-2zy & y^2 & b-xy \\
		-c_2+2z x & -b-xy & x^2 \end{pmatrix}.$$ 
Let $\Theta : B^3 \surj I$ be a surjective map defined by 
$\gT(1,0,0)=0,~ \gT(0,1,0)=-c_2$ and $\gT(0,0,1)=c_1$. Then, since
$\Gamma$ is invertible and $\gT\,(z^2,c_1,c_2)=0$, it follows that
$I=(d_1,d_2)$, where 
 $d_1 = -y^2 c_2 + c_1 (b-xy)$ and $d_2 =
c_2(b+xy)+c_1x^2$. From the construction of elements $d_1$ and $d_2$, it
follows that  $d_1 - c_1b \in I^2$ and $d_2 -
c_2 b\in I^2$. Let $\Delta=\diag (z,b)\in M_2(B)$. Since diagonal matrices
of determinant $1$ are elementary, $\Delta \ot B/I \in E_2(B/I)$. Since 
the canonical map $E_2(B) \ra E_2(B/I)$ is surjective, there exists
$\Phi\in E_2(B)$ such that $\Delta \ot B/I = \Phi \ot B/I$. Let 
$[d_1,d_2] \,\Phi = [e_1,e_2]$. From the construction of $\Phi$, it follows
that $I=(e_1,e_2)$ with $e_1-c_1 \in I^2$ and $e_2-b^2 c_2\in I^2$.
  This proves the lemma.  
$\hfill \square$
\end{proof}

\begin{lemma}\label{square-lift}
Let $A$ be a ring and let $I$ be an ideal of $A$. Let $s\in A$
be such that $I+(s)=A$. Let $Q$ be a projective $A$-module
such that $Q/IQ$ is free and let $P= Q \op A^2$. 
Let $\Phi : P \surj I$ be a surjection. 
Let $r$ be  a positive  integer. Then, the map
$\Phi' = s^r\, \Phi : P \ra I$ induces a surjection
$\Phi' \ot A/I : P/IP \surj I/I^2$. Moreover if $r$ is even, then, the
surjection $\Phi' \ot A/I$
can be lifted to a surjection $\Psi : P \surj I$.
\end{lemma}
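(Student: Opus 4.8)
The plan is to prove the two assertions of the lemma in turn; the second one will rest on Krusemeyer's explicit completion of the unimodular row $[a^{2},b,c]$, as packaged in Lemma \ref{krusem}.

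\emph{First assertion.} Since $I+(s)=A$, the image of $s$ in $A/I$ is a unit, hence so is the image $\ol{s}^{\,r}$ of $s^{r}$. As $\Phi$ is surjective, $\Phi\ot A/I\colon P/IP\to I/I^{2}$ is surjective; and $\Phi'\ot A/I$ is nothing but $\ol{s}^{\,r}$ times this map, so it too is surjective.

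\emph{The lift.} Write $P=Q\op Af_{1}\op Af_{2}$, put $\gt=\Phi|_{Q}\colon Q\to I$ and $c_{i}=\Phi(f_{i})\in I$, and set $L=\gt(Q)$, so that $I=L+(c_{1},c_{2})$. It is enough to produce a surjection $\Psi\colon P\surj I$ with $\Psi(p)-s^{r}\Phi(p)\in I^{2}$ for all $p\in P$, and I would look for $\Psi$ of the shape $\Psi(q,a_{1}f_{1}+a_{2}f_{2})=\gt'(q)+a_{1}e_{1}+a_{2}e_{2}$, where $\gt'\equiv s^{r}\gt$ and $e_{i}\equiv s^{r}c_{i}$ modulo $I^{2}$. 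To manufacture the $e_{i}$, I pass to $\ol A=A/L$: here $\ol I=I/L$ is generated by the two elements $\ol c_{1},\ol c_{2}$, and the image of $s$ is a unit modulo $\ol I$, so Lemma \ref{krusem} applies. Applying it to the pair $(\ol c_{1},\ol c_{2})$ with $b=\ol s$, and then once more to the transposed pair $(\ol c_{2},\ol c_{1})$ — and it is exactly here that the hypothesis "$r$ even" is used — one gets $\ol I=(\ol e_{1},\ol e_{2})$ with $\ol e_{i}-\ol{s}^{\,r}\ol c_{i}\in\ol I^{\,2}$. Choosing lifts $e_{i}\in A$ of $\ol e_{i}$, this says $e_{i}-s^{r}c_{i}\in I^{2}+L$; writing $e_{i}-s^{r}c_{i}=w_{i}+\ell_{i}$ with $w_{i}\in I^{2}$, $\ell_{i}\in L$ and replacing $e_{i}$ by $e_{i}-\ell_{i}$, I obtain $e_{1},e_{2}\in A$ with $e_{i}-s^{r}c_{i}\in I^{2}$ and $L+(e_{1},e_{2})=I$.

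\emph{The main obstacle.} What is left is to choose $\gt'\equiv s^{r}\gt\pmod{I^{2}}$ so that the resulting $\Psi=(\gt',e_{1},e_{2})$ is onto, i.e.\ $\gt'(Q)+(e_{1},e_{2})=I$. Writing $\gt'=s^{r}\gt+\mu$ with $\mu\colon Q\to I^{2}$, one checks $\gt'(Q)+I^{2}=s^{r}L+I^{2}=L+I^{2}$ (the last equality because $s$ is a unit modulo $I$), so the ideal $K:=\gt'(Q)+(e_{1},e_{2})$ is contained in $I$ and satisfies $K+I^{2}=I$; hence by Lemma \ref{010} it suffices to choose $\mu$ so that every maximal ideal of $A$ containing $K$ also contains $I$. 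This last patching is the heart of the matter and the step I expect to be hardest. It is here that the freeness of $Q/IQ$ enters: because $\Psi(IQ)\subset I\cdot\Psi(Q)\subset I^{2}$ automatically, the condition $\gt'\equiv s^{r}\gt\pmod{I^{2}}$ needs only be imposed on lifts $q_{1},\dots,q_{m}$ of an $A/I$-basis of $Q/IQ$, which reduces the correction to a finite problem at the finitely many "bad" maximal ideals; there one adjusts the values $\mu(q_{j})$ against $e_{1},e_{2}$ — using Lemma \ref{002} and the Eisenbud--Evans moves of Theorem \ref{EE} — to achieve $K=I$. Once $\Psi$ is surjective, $\Psi\ot A/I=\Phi'\ot A/I$ holds by construction, so $\Psi$ is the required lift.
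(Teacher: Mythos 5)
Your first assertion and your use of Krusemeyer's lemma modulo $L=\gt(Q)$ are fine, but the proof has a genuine gap exactly at the step you flag as ``the heart of the matter'': you never actually produce the correction $\mu\colon Q\ra I^{2}$ making $K=\gt'(Q)+(e_{1},e_{2})$ equal to $I$. Saying that one ``adjusts the values $\mu(q_{j})$ against $e_{1},e_{2}$ using Lemma \ref{002} and Eisenbud--Evans'' is a plan, not an argument: the maximal ideals containing $K$ but not $I$ need not be finite in number, Eisenbud--Evans moves modify $\gt'$ by multiples of a single element (not by arbitrary elements of $I^{2}Q^{*}$ while preserving the congruence $\gt'\equiv s^{r}\gt \pmod{I^{2}}$), and it is not clear that the ideal $s^{r}L+(e_{1},e_{2})$ can always be enlarged to $I$ by such corrections. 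The difficulty is self-inflicted: by insisting that \emph{every} coordinate of $\Psi$ be congruent to $s^{r}$ times the corresponding coordinate of $\Phi$ (which is why you apply Krusemeyer twice), you are forced to perturb $\gt$ itself, and then surjectivity is no longer automatic.

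The paper's proof avoids this entirely, and this is where the freeness of $Q/IQ$ is really used. Over $A/I$ the map $\Phi'\ot A/I$ is $(\wt\Phi)\,\wt\gb$ with $\wt\gb=\diag(s^{r},\ldots,s^{r})\in\Aut(P/IP)$, and since diagonal matrices of determinant $1$ are elementary, $\wt\gb=\diag(1,\ldots,1,\wt{s^{nr}})\,\wt{\gb'}$ with $\wt{\gb'}\in E(P/IP)$; by Proposition \ref{trans} the factor $\wt{\gb'}$ lifts to an automorphism of $P$, so one is reduced to lifting $(\phi,f_{1},s^{nr}f_{2})\ot A/I$, i.e.\ to changing only the \emph{last} coordinate (by the even power $s^{nr}$, a square). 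Then a single application of Lemma \ref{krusem} modulo $K=\phi(Q)$ produces $g_{1},g_{2}$ with $f_{1}-g_{1}\in I^{2}$, $s^{nr}f_{2}-g_{2}\in I^{2}$ and $K+(g_{1},g_{2})=I$, so $(\phi,g_{1},g_{2})$ is surjective by construction --- no correction of $\phi$ is needed. To repair your write-up you should insert this elementary-matrix factorization (or an equivalent device) before invoking Krusemeyer; as it stands the final surjectivity claim is unproven.
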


\begin{proof}
Since $I+(s)=A$ and $\Phi : P\surj I$ is a surjection, it is easy to see
that $\Phi' \ot A/I$ is a surjection from $P/IP$ to $I/I^2$.
 Now, we assume that $r=2l$.
 
Since $P = Q \op A^2$, we write $\Phi = (\phi,f_1,f_2).$ Let rank $Q/IQ=n-2$.
Let ``tilde'' denote reduction modulo $I$. Then, since $Q/IQ$ is free of
rank $n-2$, fixing a basis of $Q/IQ,$ we can write $\wt \Phi = (\wt
{k_1},\ldots, \wt {k_{n-2}},\wt {f_1}, \wt {f_2})$. Let 
$\gb = \diag (s^r,\ldots,s^r)$. Then $\wt \gb \in
\Aut(P/IP)$ and $\wt {\Phi^\prime} = \wt \Phi\, \wt \gb$. 
Since diagonal matrices of determinant $1$ are elementary,
we get $\wt \gb= \diag(1,\ldots,1,\wt {s^{nr}})\,\wt {\gb'}$, where $\wt
{\gb'}\in E(P/IP)$. By (\ref{trans}), $\wt {\gb'}$ can be lifted to an 
automorphism of $P$. Therefore, to prove the lemma, it is enough to show that 
the surjection 
$(\phi,f_1,s^{nr}f_2)\ot A/I : P/IP \surj I/I^2$
 can be lifted to a surjection $(\phi,g_1,g_2) : P\surj I$. Since $nr$ is
even, $s^{nr}={s_1}^2$. Therefore, replacing $s$ by $s_1$, we can
 assume that $nr=2$.

Let $K=\phi(Q)$ and let ``bar'' denote reduction modulo $K$.  Then
 $\ol I = (\ol {f_1},\ol {f_2})$. Applying (\ref{krusem}), we get $\ol I=
 (\ol {h_1},\ol {h_2})$ with $\ol {f_1} -\ol {h_1} \in \ol {I^2}$ and
$\ol {s^2 f_2} -\ol {h_2}\in \ol {I^2}$.
 Therefore, $I=(h_1,h_2)+K$, where $f_1 - h_1 =f'_1 +h'_1$ and
 $s^2 f_2 - h_2 = f'_2+h'_2$ for some $f'_1,f'_2\in
 I^2$ and $h'_1,h'_2\in K$.  Let $g_i= h_i+ h'_i$ for $i=1,2$. Then, we
 have $I =(g_1,g_2)+ K$ with $f_1-g_1\in I^2$ and $s^2 f_2-g_2\in
 I^2$. This proves the result.  
$\hfill \square$
\end{proof}

\begin{remark}
It will be interesting to know if the above result is valid without the
assumption that $Q/IQ$ is free.
\end{remark}

The following result is very crucial for our main result
(\ref{general}). 

\begin{lemma}\label{3.2}
Let $B$ be a ring and let $s,t\in B$ be
such that $Bs+Bt=B$. Let $I,L$ be ideals of $B$ such that $L\subset I^2$.
 Let $P$ be  a projective $B$-module and let $\phi : P\surj
I/L$ be a surjection. If $\phi\ot B_t$ can be lifted to a surjection $\Phi
:P_t\surj I_t$. Then $\phi$ can be lifted to
a surjection $\Psi : P\surj I/(sL)$.
\end{lemma}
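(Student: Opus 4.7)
The plan is to construct $\Psi$ by a patching argument over the cover $\Spec B = D(s) \cup D(t)$ (which is a cover because $Bs + Bt = B$). The key observation that makes everything work is that $(sL)_{st} = L_{st}$ since $s$ is a unit in $B_{st}$, so the distinction between $sL$ and $L$ disappears on the overlap.

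First I would build the local lifts. On $B_s$, since $s$ is a unit, $(sL) B_s = L B_s$, so the natural surjection $I_s/(sL)_s \surj I_s/L_s$ is actually an isomorphism; identifying the two, the localized surjection $\phi_s : P_s \surj I_s/L_s$ is automatically a surjective lift $\Psi_s : P_s \surj I_s/(sL)_s$ of $\phi_s$. On $B_t$, the given surjection $\Phi : P_t \surj I_t$ lifts $\phi_t$, so composing with the quotient $I_t \surj I_t/(sL)_t$ yields a surjection $\Psi_t : P_t \surj I_t/(sL)_t$ that also lifts $\phi_t$.

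Next I would verify compatibility on the overlap $B_{st}$. Both $\Psi_s \otimes B_{st}$ and $\Psi_t \otimes B_{st}$ are maps $P_{st} \to I_{st}/(sL)_{st}$, and since $(sL)_{st} = L_{st}$, this target is just $I_{st}/L_{st}$. Each of these maps is a lift of $\phi_{st} : P_{st} \surj I_{st}/L_{st}$ to the same module $I_{st}/L_{st}$, so each one simply equals $\phi_{st}$; in particular they agree.

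Finally, since $\{D(s), D(t)\}$ is a Zariski cover (equivalently, $B \to B_s \times B_t$ is faithfully flat because $Bs + Bt = B$), module-valued homomorphisms satisfy sheaf-like gluing: the compatible pair $(\Psi_s, \Psi_t)$ descends to a unique $B$-linear map $\Psi : P \to I/(sL)$ whose localizations at $s$ and $t$ recover $\Psi_s$ and $\Psi_t$, and which therefore lifts $\phi$. Surjectivity is a local property, and both $\Psi_s$ and $\Psi_t$ are surjective, so $\Psi$ is surjective. There is no real obstacle here; the content of the lemma is just the bookkeeping identity $(sL)_{st} = L_{st}$ that forces the two local lifts to coincide on the overlap for free.
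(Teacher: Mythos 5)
Your argument is correct, but it takes a genuinely different route from the paper's. You glue the two evident local lifts over the cover $\Spec B = D(s)\cup D(t)$: on $B_s$ the ideal $sL$ coincides with $L$, so $\phi_s$ itself already is the required lift; on $B_t$ you compose the given $\Phi$ with $I_t\ra I_t/(sL)_t$; and on the overlap both localizations are forced to be equal to $\phi_{st}$ because $(sL)_{st}=L_{st}$, so the pair descends to a map $P\ra I/(sL)$, with surjectivity and the lifting property checked locally. The only point you leave implicit is that this gluing uses the exactness of $0\ra N\ra N_s\oplus N_t\ra N_{st}$ together with the fact that $\Hom_B(P,-)$ commutes with localization, which holds here since $P$ is finitely generated projective, hence finitely presented --- a standard point, not a gap. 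The paper argues instead by a global explicit construction: after normalizing $t\equiv 1$ modulo $(s)$, it takes a lift $\Phi'\in \Hom_B(P,I)$ of $\phi$, chooses $r$ large so that $\Gamma=t^r\Phi$ lands in $I$ and agrees with $t^r\Phi'$ modulo $L$, writes $t^r=1-sa$ and sets $\Theta=\Gamma+sa\Phi'$, which is again a lift of $\phi$, and then verifies $\Theta(P)+sL=I$ via the maximal-ideal criterion of Lemma \ref{010} (this is exactly where the hypotheses $L\subset I^2$ and Noetherianness are used), finally composing with $I\ra I/(sL)$. Your proof is shorter and more conceptual, and it shows in passing that the hypothesis $L\subset I^2$ is not actually needed for this lemma; the paper's proof has the mild advantages of being completely explicit (it exhibits $\Psi$ as the reduction of an honest map $P\ra I$, though projectivity of $P$ makes such a factorization automatic anyway) and of staying within the elementary ideal-theoretic toolkit used throughout the chapter rather than invoking the patching formalism.
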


\begin{proof}
Without loss of generality, we can assume that $t=1$ modulo the ideal $(s)$.
Let $l$ be a positive integer such that $t^l \Phi(P) \subset I$. 
Let $\Phi^\prime : P \ra I$ be a lift of $\phi$. Then, since $\Phi$
is a lift of $\phi_t,$ there exists an integer $r \geq l$ such that
$(t^r \Phi - t^r \Phi^\prime) (P) \subset L.$  Let $\Gamma = t^r \Phi$
and $K= \Gamma(P).$ Then, since $r \geq l$,  $K \subset I$. Since $K_t =
I_t$, we have $t^n I\subset K$ for some positive integer $n$. 
Since $1-t \in (s)$, $t^n=1-sx$ for some $x\in B$. Hence
$(1-sx)I\subset K$. Therefore, we have $K +sI = I.$

Let $t^r = 1- sa$ and let $\Theta = \Gamma + sa\Phi^\prime.$
Then $\Theta - \Phi^\prime = \Gamma - t^r \Phi^\prime.$
Therefore $(\Theta - \Phi^\prime)(P) \subset L$ and hence
$\Theta$ is also lift of $\phi.$ Therefore, $\Theta(P) + L=I$. Moreover, 
$\Theta (P) + sI = \Gamma (P) +sI = K+sI=I .$  
Write $I_1=\Theta(P)+sL$. Any maximal
ideal of $B$ containing $I_1$ contains $s$ or $L$ and hence contains $I$.
Moreover, since $L\subset I^2$, $I=I_1+I^2$.
Therefore, by (\ref{010}), $I=I_1$, i.e.
$\Theta(P) + sL = I.$ If $ \Gamma' : I \surj I/sL$  is a canonical surjection,
then putting $\Psi = \Gamma' \Theta $,  we are through.
$\hfill \square$
\end{proof}
\medskip

The following technical lemma is used in the proof of (\ref{monic})
which is very crucial for our main result (\ref{general}).

\begin{lemma}\label{sub}
Let $B$ be a ring and let $I_1,I_2$ be two comaximal ideals of
$B$. Let $P=P_1\op B$ be a projective $B$-module of rank $n$. Let
$\Phi : P \surj I_1$ and $\Psi : P\surj I_1\cap I_2$
be two surjections such that $\Phi\ot B/I_1 = \Psi \ot B/I_1$. Assume
that

$(1)~a=\Phi(0,1)$ is a non zero divisor modulo the ideal $\sqrt
{\Phi(P_1)}$.

$(2)~ n-1 > \dim (\ol B/\CJ(\ol B))$, where $\ol B =B/\Phi(P_1)$.

Let $L\subset {I_2}^2$ be an ideal such that
$\Phi(P_1)+L=B$.  Then, the surjection $\Psi : P \surj I_1\cap I_2$
induces a surjection $\ol \Psi : P \surj I_2/L$. Moreover, $\ol \Psi$
can be lifted to a surjection $\Lambda : P\surj I_2$.
\end{lemma}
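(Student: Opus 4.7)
The first assertion --- that $\bar\Psi:P\surj I_2/L$ is a well-defined surjection --- is a routine verification. Since $\Psi(P)=I_1\cap I_2\subseteq I_2$, composing $\Psi$ with $I_2\surj I_2/L$ yields a well-defined map, and the ideal identity $(I_1\cap I_2)+L=I_2$ needed for surjectivity follows because $\Phi(P_1)\subseteq I_1$ together with the hypothesis $\Phi(P_1)+L=B$ forces $I_1+L=B$; writing $1=u+v$ with $u\in I_1$ and $v\in L$, each $x\in I_2$ decomposes as $x=xu+xv\in (I_1\cap I_2)+L$.

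For the lifting assertion, the plan is to seek $\Lambda$ in the form $\Psi+\Delta$ with $\Delta:P\to L$, so that the congruence $\Lambda\equiv\Psi\pmod{L}$ is automatic and the task reduces to arranging $\Lambda(P)=I_2$. Write $\Phi=(\phi,a)$ and $\Psi=(\psi,b)$, and set $J=\Phi(P_1)=\phi(P_1)$. The relation $\Phi\ot B/I_1=\Psi\ot B/I_1$ unpacks to $(\Psi-\Phi)(P)\subseteq I_1^2$. Passing to $\bar B=B/J$, the picture simplifies sharply: $\bar I_1=(\bar a)$ with $\bar a$ a nonzerodivisor modulo the nilradical (by hypothesis (1)), while $\bar I_2=\bar L=\bar B$ (since $J\subseteq I_1$ combined with $I_1+I_2=B$ gives $J+I_2=B$, and $J+L=B$ is hypothesized). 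Moreover, hypothesis (2) gives $\mathrm{rank}(P_1/JP_1)=n-1>\dim\bar B/\CJ(\bar B)$, so by Serre's theorem (\ref{Serre}) the projective $\bar B$-module $P_1/JP_1$ carries a unimodular element, i.e.\ a surjective functional $\bar g\in(P_1/JP_1)^*$; lift to $g\in P_1^*$. Pick $s\in L$ with $s\equiv 1\pmod J$ (using $J+L=B$). The congruence $\bar b=\bar a(1+\bar a\bar c)$ in $\bar B$ (forced by $b-a\in I_1^2$ and the NZD property of $\bar a$) is what allows me to calibrate $\Delta=s\cdot(g,\eta)$ for an appropriate $\eta\in B$ so that $\Delta(P)\subseteq sB\subseteq L$ and the induced map $\bar\Lambda\in(P/JP)^*$ becomes surjective onto $\bar B=\bar I_2$.

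The main obstacle is then the surjectivity check $\Lambda(P)=I_2$. By Lemma \ref{010} it suffices to verify (i) $\Lambda(P)+I_2^2=I_2$, which is automatic from $\Delta(P)\subseteq L\subseteq I_2^2$ and $\Psi(P)+I_2^2=I_1I_2+I_2^2=I_2(I_1+I_2)=I_2$, and (ii) every maximal ideal $\mathfrak m\supseteq\Lambda(P)$ contains $I_2$. Maximal ideals containing $L$ are handled by $\Lambda(P)+L=I_2$; those containing $J$ by the mod-$J$ construction, since $\bar\Lambda$ is surjective mod $J$ gives $\Lambda(P)+J=B$. The delicate remaining configuration is $\mathfrak m$ containing neither $L$ nor $J$: there $(I_1)_{\mathfrak m}=B_{\mathfrak m}$ (as $J\subseteq I_1$ and $J\not\subseteq\mathfrak m$), so $\Psi_{\mathfrak m}$ is already surjective onto $(I_2)_{\mathfrak m}$ and possible cancellation in $(\Psi+\Delta)_{\mathfrak m}$ must be ruled out. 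I expect this to require an Eisenbud--Evans--style height-bumping step, applying proposition \ref{EE} (or the stronger \ref{m-phil}) to $\Lambda$ to guarantee $\Lambda(P)$ has height $n$; executing this adjustment simultaneously with the constraints $\Delta(P)\subseteq L$ and the mod-$J$ surjectivity of $\bar\Lambda$ is the technical crux of the proof.
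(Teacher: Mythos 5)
Your verification of the first assertion is fine, and the ansatz $\Lambda=\Psi+\Delta$ with $\Delta(P)\subseteq L$ does make the congruence modulo $L$ automatic; but the proof stops exactly where the lemma lives. After reducing, via (\ref{010}), to showing that every maximal ideal containing $\Lambda(P)$ contains $I_2$, you dispose only of the maximal ideals containing $L$ or containing $J=\Phi(P_1)$, and for the remaining ones you offer only the expectation that an Eisenbud--Evans ``height-bumping'' applied to $\Lambda$ will finish. That expectation is the entire content of the lemma and it is not clear it can be realized along these lines. First, the lemma places no bound on $\dim B$ (only on $\dim(\ol B/\CJ(\ol B))$ for $\ol B=B/\Phi(P_1)$), so even arranging $\hh\,\Lambda(P)=n$ would not rule out a maximal ideal containing $\Lambda(P)$ but not $I_2$: generically an ideal of the form $\Lambda(P)$ with $\Lambda\equiv\Psi \pmod L$ will be $I_2\cap I_3$ for some extra height-$n$ component $I_3$, and killing that component is precisely the subtraction problem, not something (\ref{EE}) or (\ref{m-phil}) can do, especially under the extra constraint that all corrections land in $L$. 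Second, a telling symptom is that hypotheses (1) and (2) do essentially no work in your argument: the identity $\bar b=\bar a(1+\bar a\bar c)$ follows from $b-a\in I_1^2$ alone (no nonzerodivisor condition needed), and surjectivity of $\bar\Lambda$ modulo $J$ can be forced by a trivial choice of $\eta$; likewise the compatibility $\Phi\ot B/I_1=\Psi\ot B/I_1$ enters only through that congruence, which is far too weak. A proof that does not consume these hypotheses in an essential way cannot be complete, since the statement fails without them.

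For contrast, the paper's proof is multiplicative rather than additive. Setting $K=\Phi(P_1)$ and $S=1+K$, one first proves the key claim that $\Psi_S\,\Delta=\Phi_S$ for some automorphism $\Delta$ of $P_S$: writing $\Phi\Gamma=\Psi$ and $\Psi\Gamma'=\Phi$ with $\Gamma,\Gamma'$ congruent to the identity modulo $I_1$, hypothesis (1) (together with $K_S\subseteq\CJ(B_S)$) shows the last row $(\zeta,c)$ of $\Gamma$ is unimodular, and hypothesis (2), via Bass (\ref{1bass}) and (\ref{trans}), normalizes it to $(0,1)$ and extracts the automorphism. Then one chooses $s=1+t\in S\cap L$ and patches two local surjections onto $I_2$: over $B_t$ one uses $\Psi_t$ (surjective onto $(I_2)_t$ because $t\in K\subseteq I_1$), and over $B_s$ one uses the coordinate projection (a surjection onto $(I_2)_s=B_s$ because $s\in L\subseteq I_2^2$), the gluing being made possible by an isotopic-to-identity automorphism split via Quillen's lemma (\ref{isotopy}). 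Some mechanism of this strength --- connecting $\Phi$ and $\Psi$ by an automorphism after a suitable localization and then patching --- is what your sketch is missing.
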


\begin{proof}
Since $L+I_1=B$ (in fact $L+\Phi(P_1)=B$) , it is easy to see that 
$\Psi$ induces a surjection $\ol \Psi : P \surj I_2/L$.
 
Let $K=\Phi(P_1)$ and $S=1+K$. Then $S\cap L \neq \varnothing$.
Therefore, we have surjections $\Phi_S$ and $\Psi_S$ from $P_S$ to
$(I_1)_S$.

\paragraph{Claim:} There exists an automorphism $\Delta$ of $P_S$ such that
$\Delta^*(\Psi_S)=\Psi_S\, \Delta = \Phi_S$, where $\Delta^*$ is an
automorphism of ${P_S}^*$ induced from $\Delta$. 
\medskip

Assume the claim. Then, there exists $s=1+t\in S,~ t\in K$ such that
$\Delta \in \Aut (P_s)$ and
$\Psi_s\, \Delta = \Phi_s$. Since $S\cap
L \neq \varnothing$, we can assume that $s\in S\cap L$.

With respect to the decomposition $P=P_1 \op B$, we write $\Phi\in P^*$ as 
$(\Phi_1,a)$, where $\Phi_1 \in {P_1}^*$ and $a\in B$. Similarly, we
write $\Psi=(\Psi_1,b)$, where $\Psi_1 \in {P_1}^*$ and $b\in B$.
Let $pr : P_1\op B(=P) \surj B$ be the map defined by $pr (p_1,\wt
b)=\wt b$,
where $p_1\in P_1$ and $\wt b\in B$. 

Since $s\in L$, $(I_2)_s=B_s$ and therefore, we can regard $pr_s$
as a surjection from $(P_1)_s\op B_s$ to $(I_2)_s$.
Since $t\in K=\Phi_1(P_1)$, the element $(\Phi_1)_t\in 
{(P_1)_t}^\ast$ is a unimodular
element. Hence, there exists an element $\Gamma \in E((P_1)_{st} \op
B_{st})$ such that $\Gamma^*((\Phi_1,a)_{st}) = pr_{st}$
i.e. $(\Phi_t)_s \, \Gamma = (pr_s)_t$. Note that $\Psi_t$ is a
surjection from $P_t$ to $(I_2)_t$.

We also have $\Psi_s\, \Delta = \Phi_s$. Hence $(\Psi_s\, \Delta)_t
\, \Gamma = (pr_s)_t$. Let $\wt \Delta = \Delta_t\, \Gamma\, 
{\Delta_t}^{-1}$. Then, we have
$(\Psi_s)_t\, \wt \Delta= (\Psi_t)_s\, \wt \Delta
 =(pr_s)_t \,{\Delta_t}^{-1}$.  Since $\Gamma$ is an
element of $E(P_{st})$ which is a normal subgroup of $\Aut (P_{st})$,
 $\wt \Delta \in E(P_{st})$ and hence is isotopic to
identity, by (\ref{lem8}). Therefore, by (\ref{isotopy}), $\wt \Delta =
{\Delta''}_s\, {\Delta'}_t$, where $\Delta'$ is an automorphism of
$P_s$ such that $\Delta'=$ Id modulo $(t)$ and $\Delta''$ is an
automorphism of $P_t$ such that $\Delta''=$ Id modulo $(s)$.
 
Thus, we have surjections $(\Psi_t \,\Delta'') : P_t \surj (I_2)_t$ and 
$(pr_s\,\Delta^{-1}\, (\Delta')^{-1}) : P_s \surj (I_2)_s$ such that
$(\Psi_t\, \Delta'')_s = (pr_s\, \Delta^{-1}\, (\Delta')^{-1})_t$.
Therefore, they patch up to yield a surjection $\Lambda : P \surj
I_2$. Since
$s=1+t\in L$, the map $B\ra B/(s)$ factors through $B_t$. Since
$\Delta'' =$ Id modulo $(s)$, we have $\Lambda \ot B/L = \Psi\ot
B/L$. 

\paragraph{Proof of the claim:} 
Since $S=1+K$, $\ol B=B/K=B_S/K_S$ and $K_S \subset
\CJ(B_S)$. Therefore, $\ol B/\CJ(\ol B) = B_S/\CJ(B_S)$. Hence $\dim
B_S/\CJ(B_S) < n-1$.

	To simplify the notation,
we denote $B_S$ by $B$, $(P_1)_S$ by $P_1$ and $(I_1)_S$ by $I$. Then, we
have two surjections $\Phi =(\Phi_1,a)$ and $\Psi=(\Psi_1,b)$
from $P_1 \op B$ to $I$ such that $\Phi\ot B/I=\Psi\ot B/I$. 
Moreover, $\Phi_1(P_1) = K \subset \CJ (B)$ and rank $P_1(=n-1)>
\dim (\ol B/\CJ(\ol B))$, where $\ol B = B/K$.
Our aim is to show that there exists an automorphism $\Delta$ of
$P=P_1\op B$ 
such that $\Psi\, \Delta= \Phi$. 

Hence onward, we write an element $\sigma\in \End(P_1 \op B)$ in the
following matrix form
$$\sigma = \left(\begin{matrix} \alpha & p \\ \eta & d
	\end{matrix}\right),\; {\mbox {where}}~ \alpha \in \End(P_1),\,
	p\in P_1, \,\eta \in {P_1}^*~{\mbox {and}}~ d\in B. $$

	Note that, with this presentation of $\sigma\in \End(P)$, if
$\Theta=(\Theta_1,e)\in {P_1}^* \op B$, then $\sigma^* (\Theta) =
\Theta \sigma = (\Theta_1 \alpha + e \eta, \Theta_1(p)+ed)$.
Moreover, if $\sigma'\in \End(P)$ has a matrix representation 
$\sigma'=\left(\begin{smallmatrix}
	\beta & p_1 \\ \mu & f
	\end{smallmatrix}\right)$, then the endomorphism $\sigma'\sigma$
of $P$
has the matrix representation   
$$\sigma'\sigma = \left(\begin{matrix}
	\beta & p_1 \\ \mu & f
	\end{matrix}\right)
\left(\begin{matrix} \alpha & p \\ \eta & d
	\end{matrix}\right)
=
\left(\begin{matrix}
	\beta\alpha + \eta_{p_1} & \beta(p)+dp_1 \\ 
	\mu	\alpha + f\eta & \mu(p)+fd
	\end{matrix}\right),$$ where $\eta_{p_1} \in \End(P_1)$ is the
composite map $P_1 \lby \eta B \lby {p_1} P_1$. 
	 
Since $\Phi \ot B/I = \Psi \ot B/I : P\surj I/I^2$, $\Phi-\Psi : P \ra
I^2$. Since $\Phi : P \surj I$, $\Phi(IP)=I^2$. Hence, there
exists
$\eta : P \ra IP$ such that $\Phi \eta = \Phi-\Psi$ (since $P$ is
projective). Write $\Gamma =
Id-\eta$. Then $\Gamma \in \End(P)$ is
identity modulo the ideal $I$ and
 $\Phi\Gamma=\Psi$. Similarly, there exist
$\Gamma' \in \End(P)$ which is identity modulo the ideal $I$ such that
$\Psi\,\Gamma'=\Phi$.
Let 
$$\Gamma = \left(\begin{matrix}
	\gamma & q \\ \zeta & c
	\end{matrix}\right),~ 
\Gamma' = \left(\begin{matrix}
	\gamma' & q' \\ \zeta' & c'
	\end{matrix}\right)$$ be the matrix representation of $\Gamma$
and $\Gamma'$, 
where $\gamma,\gamma' \in
 \End(P_1)$, $q,q' \in P_1$, $\zeta,\zeta'\in {P_1}^\ast$ and $c,c'\in B$.
Then 
$$\Gamma\,\Gamma' = \left(\begin{matrix}
	\gamma\gamma' + {\zeta'}_q & \gamma (q') + c' q \\
	\zeta \gamma' + c \zeta' & \zeta (q') + c c'
	\end{matrix} \right). $$

Since $\Phi\,\Gamma\,\Gamma' = \Phi$ ($\Phi \Gamma=\Psi$ and
 $\Psi\Gamma'=\Phi$) and $\Phi = (\Phi_1,a)$, we get
 $\Phi_1(\gamma (q') + c'q) + a(\zeta (q') + c c') =a$. 
Hence $a(1-\zeta (q') - c c') \in K$. Since, by hypothesis, no minimal 
prime ideal of	$K$ contains $a$, we have
$(1-\zeta (q') - c c') \in \sqrt K$, i.e. $(\zeta (q') + c c')+\sqrt
	K = B$. But $K\subset \CJ(B)$ and hence 
$(\zeta (q') + c c')=B$, i.e. the element $\zeta(q')+cc' \in B^*$.
 Therefore  $(\zeta,c)\in P^*$ is a
 unimodular element. Note that, since $\Gamma$ is an endomorphism of
$P$ which is identity modulo $I$, $(\zeta,c)=(0,1)$ modulo $I$. Now, we
show that there exists an automorphism $\Delta_1$ of $P$ such that 
$(1)~ (\zeta,c)\,\Delta_1=(0,1)$ and $(2)~ \Delta_1$ is an identity
automorphism of $P$ modulo $I$. 

Let ``bar'' denote reduction modulo $K$. Since $\dim (\ol B/\CJ(\ol B))
 < n-1$, by a classical result of Bass (\ref{1bass}), there exists 
 $\zeta_1 \in {P_1}^\ast$ such
that 
$(\ol {\zeta+c\, \zeta_1})$ is a unimodular element of $\ol {{P_1}^\ast}$.
 But then, since $K\subset \CJ(B)$, $\zeta+c\,\zeta_1$ is a
unimodular element of ${P_1}^\ast$.
Let $q_1\in P_1$ be such that $(\zeta+c\,\zeta_1)(q_1)=1$. 
 Let 
$$\gf_1 = \left(\begin{matrix}
	1 & 0 \\ \zeta_1  & 1
	\end{matrix} \right),~
\gf_2 = \left(\begin{matrix}
	1 & (1-c)\,q_1 \\ 0 & 1
	\end{matrix}\right),~
\gf_3 = \left(\begin{matrix}
	1& 0  \\ - (\zeta +c\zeta_1)  & 1
	\end{matrix} \right) .$$

Write $ \Delta_1 = \gf_1\,\gf_2 \, \gf_3 .$ 
	 Since $(\zeta,c)=(0,1)$ modulo $I$, from the construction, it
follows that $\Delta_1$ is an automorphism of $P=P_1\op B$ which is
identity modulo $I$. Moreover, it is easy to see that $(\zeta,c)
\Delta_1 = (0,1)$. Therefore, we have 
$$\Gamma\,\Delta_1 = \left(\begin{matrix}
	\gamma_1 & q_2 \\ 0 & 1
	\end{matrix}\right),$$ for some $\gamma_1\in \End(P_1)$ and
	$q_2\in P_1$.
Since both $\Gamma$ and $\Delta_1$ are identity modulo $I$,
$\gamma_1$ is an endomorphism of $P_1$ which is identity modulo $I$
and $q_2 \in IP_1$. Let
$\Delta_2 =\left(\begin{matrix}
	1 & -q_2 \\ 0 & 1
	\end{matrix}\right).$
Then, $\Delta_2$ is an automorphism of $P_1\op B$ which is
identity modulo $I$. Moreover, 
$$\Delta=\Delta_2\, \Gamma\, \Delta_1 = \left(\begin{matrix}
	\gamma_1 & 0 \\ 0 & 1
	\end{matrix}\right).$$

Let $\wt a = \Phi_1(q_2) + a$. Then $\Phi\, {\Delta_2}^{-1} = ( \Phi_1,
\wt a)$ and hence $K +(\wt a) = I$. Moreover, $( \Phi_1, \wt
a)\, \Delta =  (\Phi_1\gamma_1, \wt a)=\Psi\,\Delta_1$ (since
$(\Phi_1,\wt a) \Delta = (\Phi_1,\wt a) \Delta_2 \Gamma \Delta_1 = \Phi
\Gamma \Delta_1 =\Psi \Delta_1$). Let $\wt \Psi_1 = \Phi_1\,\gamma_1$. 
Therefore, to complete the
proof (of the claim), it is enough to show that the surjections  
$\wt \Phi = ( \Phi_1, \wt a)$ and
$\wt \Psi = (\wt \Psi_1, \wt a)$  from $P$ to $I$ are connected by an
automorphism of $P$. Note that $\Delta \in \End(P)$.

Since $\gamma_1 \in \End(P_1)$ is identity
 modulo $I$, $(1-\gamma_1)(P_1) \subset IP_1$.
Since $P_1$ is a projective
$B$-module, we have $\Hom(P_1,IP_1)=I\,\Hom(P_1,P_1)$. 
Hence $1-\gamma_1 = \sum
b_i\beta_i$, where $\beta_i \in \End(P_1)$ and $b_i\in I$. Let $b_i =
c_i+d_i \wt a$, where $c_i\in K$ and $d_i\in B$. Then $1-\gamma_1 =
\sum c_i\beta_i + \wt a\sum d_i\beta_i$. Hence $\gamma_1 =
\theta + \wt a\theta'$, where $\theta = 1-\sum c_i \beta_i$ and
$\theta' = -\sum d_i \beta_i$. Since determinant of $\theta$ is $1+x$ for some
$x\in K \subset \CJ(B)$, $\theta$ is an automorphism of $P_1$. 

	We have $\wt \Psi_1 =
 \Phi_1 \gamma_1 =  \Phi_1\gt + \wt a \Phi_1 \theta'$.
  Let $\gL =
\left(\begin{matrix} \theta & 0 \\  \Phi_1 \theta' & 1
\end{matrix}\right).$ 
	Then  $( \Phi_1,\wt a)\gL = (\wt \Psi_1,
\wt a)$ and $\gL$ is an automorphism of $P$. This proves the result.
$\hfill \square$
\end{proof}
\newpage

Now, we will prove the main result of this chapter which is labeled
as ``Subtraction Principle''. This result is very important for the
proof of our main result and is also used crucially to prove other
results of the next chapter.

\begin{theorem}\label{subtraction}
Let $B$ be a  ring of dimension $d$ and let $I_1, I_2 \subset
B$ be two comaximal ideals of height $n$, where $2n \geq d+3$. Let
$P=P_1\op B$ be a projective $B$-module of rank $n$. Let $\Gamma : P\surj
I_1$ and $\Theta : P\surj I_1\cap I_2$ be two surjections such that 
$\Gamma \ot B/I_1
 =\Theta \ot B/I_1$. Then, there exists a surjection $\Psi : P\surj I_2$
such that $\Psi \ot B/I_2 = \Theta \ot B/I_2$.   
\end{theorem}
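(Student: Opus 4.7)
The plan is to reduce Theorem \ref{subtraction} to Lemma \ref{sub} by modifying $\Gamma$ and $\Theta$ via the same automorphism of $P$, chosen so that the compatibility $\Gamma \ot B/I_1 = \Theta \ot B/I_1$ is preserved, until the hypotheses of \ref{sub} are met. Writing $\Gamma = (\Gamma_1, a)$ and $\Theta = (\Theta_1, c)$ under $P^* = {P_1}^* \oplus B$, the two features to arrange are (i) $\hh \Gamma(P_1) = n-1$, and (ii) $\Gamma(P_1) + I_2 = B$. Granting (i) and (ii), the hypotheses of \ref{sub} drop out: every minimal prime of $\Gamma(P_1)$ has height $\leq n-1$ (since $P_1$ has rank $n-1$), so none can contain $a$ (else it would contain $I_1 = \Gamma(P_1) + (a)$ of height $n$), yielding (1); with $\ol B = B/\Gamma(P_1)$, the estimate $\dim(\ol B/\CJ(\ol B)) \leq \dim \ol B \leq d-(n-1) < n-1$ follows from $2n \geq d+3$, yielding (2); and from (ii) one picks $h \in I_2$ with $1-h \in \Gamma(P_1)$ and takes $L = (h^2) \subset {I_2}^2$, so $\Gamma(P_1) + L = B$.

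To arrange (ii): the surjectivity of $\Gamma$ onto $I_1$ combined with $I_1 + I_2 = B$ makes $(\Gamma_1, a) \bmod I_2$ unimodular in $({P_1}/I_2 P_1)^* \oplus B/I_2$. Bass's theorem \ref{1bass} applies (because the rank $n-1$ of $({P_1}/I_2 P_1)^*$ exceeds $\dim(B/I_2) \leq d-n$, using $2n \geq d+3$) and yields $\ol\beta$ such that $\Gamma_1 + a\ol\beta$ is unimodular in $({P_1}/I_2 P_1)^*$. Lift $\ol\beta$ to $\beta \in {P_1}^*$ and apply the transvection $\sigma \in E(P)$ determined by $\sigma(p_1, t) = (p_1, t + \beta(p_1))$ --- a transvection because $(0,1) \in P$ is unimodular and pairs to zero with $(\beta, 0) \in P^*$ --- to both $\Gamma$ and $\Theta$. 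This replaces $(\Gamma_1, a)$ by $(\Gamma_1 + a\beta, a)$ and $(\Theta_1, c)$ by $(\Theta_1 + c\beta, c)$, and the new $\Gamma$ satisfies (ii).

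To arrange (i) while preserving (ii): apply a further transvection of the same form, with the correction now chosen inside $I_2 {P_1}^*$ --- that is, $\Gamma_1 \mapsto \Gamma_1 + a\gamma$ for some $\gamma \in I_2 {P_1}^*$. Any such correction preserves (ii) because $a\gamma(P_1) \subset I_2$ forces $(\Gamma_1 + a\gamma)(P_1) + I_2 = \Gamma_1(P_1) + I_2 = B$. Now run the Eisenbud--Evans procedure \ref{EE} (or the iterative prime-avoidance argument in \ref{m-phil}) restricted to such $\gamma$: at primes $\p$ containing $I_2$, property (ii) already forces $\Gamma_1(P_1) \not\subset \p$, so height escape is automatic; at primes $\p$ not containing $I_2$, the constraint $\gamma \in I_2 {P_1}^*$ is vacuous after localizing at an element of $I_2 \setminus \p$, and the usual prime avoidance produces the needed $\gamma$. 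The output satisfies $\hh (\Gamma_1 + a\gamma)(P_1) \geq n-1$, and the reverse inequality is automatic since $P_1$ has rank $n-1$, so (i) holds.

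With (i), (ii) and $L$ in place, Lemma \ref{sub} applies and produces a surjection $\Lambda : P \surj I_2$ with $\Lambda \ot B/L = \Theta \ot B/L$; since $L \subset {I_2}^2 \subset I_2$, this gives $\Lambda \ot B/I_2 = \Theta \ot B/I_2$, so $\Psi := \Lambda$ is the required surjection. The principal obstacle is the constrained Eisenbud--Evans step in the third paragraph: \ref{EE} as stated does not explicitly permit a constraint of the form $\gamma \in I_2 {P_1}^*$, and justifying that the standard prime-avoidance argument still goes through under this constraint --- using both (ii) and the dimension slack from $2n \geq d+3$ --- is the delicate step of the proof.
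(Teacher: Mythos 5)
Your proposal follows essentially the same route as the paper's proof: write $\Gamma=(\Gamma_1,a)$, use Bass (\ref{1bass}) modulo $I_2$ to make $K=\Gamma_1(P_1)$ comaximal with $I_2$, force $\hh K=n-1$ by an Eisenbud--Evans type correction, and then invoke (\ref{sub}). The only real difference is bookkeeping: you twist both $\Gamma$ and $\Theta$ by the same transvections of $P$ so that $\Gamma\ot B/I_1=\Theta\ot B/I_1$ is preserved, whereas the paper leaves $\Theta$ untouched and corrects $\Gamma_1$ by multiples of $a^2$, so that the change lands in $I_1^2$. Both are legitimate; if you twist, note at the end that (\ref{sub}) yields $\Lambda$ agreeing modulo $L$ with the \emph{twisted} $\Theta$, so the required $\Psi$ is $\Lambda$ composed with the inverse of the automorphism used --- harmless, since the whole reduction is by an automorphism of $P$, but your final line ``$\Psi:=\Lambda$'' silently identifies the twisted $\Theta$ with the original one.

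The step you flag as delicate is genuine --- it is exactly the point the paper glosses over with ``using similar arguments, one can assume that height of $K$ is $n-1$'' --- but it does not require a constrained version of (\ref{EE}): the constraint can be absorbed into the coefficient. Having arranged (ii), choose $e\in I_2$ with $1-e\in\Gamma_1(P_1)$ and apply (\ref{EE}) to the element $(\Gamma_1,ae)\in {P_1}^*\op B$; this gives $\beta\in{P_1}^*$ such that $\hh K_{ae}\geq n-1$, where $K=(\Gamma_1+ae\beta)(P_1)$. The correction $ae\beta$ is of your allowed form (it is $a$ times an element of $I_2{P_1}^*$), so $K+I_2=\Gamma_1(P_1)+I_2=B$ and $K+(a)=\Gamma_1(P_1)+(a)=I_1$ persist. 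Now let $\p$ be a minimal prime of $K$: if $a\in\p$, then $\p\supseteq I_1$, so $\hh\p\geq n$; if $e\in\p$ and $a\notin\p$, then $\p\supseteq K+(e)=\Gamma_1(P_1)+(e)=B$, impossible; otherwise $ae\notin\p$ and $\hh\p\geq n-1$ by the choice of $\beta$. Since $K$ is an image of the rank $n-1$ module $P_1$, we get $\hh K=n-1$, i.e.\ (i) holds while (ii) is preserved. With this insertion your argument is complete and coincides in substance with the paper's proof.
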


\begin{proof}
Let $\Gamma = (\Gamma_1,a)$. Let ``bar'' denote reduction modulo $I_2$. 
 Then $\ol \Gamma =(\ol
{\Gamma_1},\ol a)$ is a unimodular element of $\ol {P^*}$. Since, by
 (\ref{lem1}),  $\dim
(B/I_2) \leq \dim B - \hh I_2 =d-n <n=$ rank $\ol {P_1}$, 
by Bass' result (\ref{1bass}), there exists $\Theta_1 \in
{P_1}^*$ such that $\ol {\Gamma_1} + \ol {a^2 \Theta_1}$ is a
unimodular element of $\ol {{P_1}^*}$.
Therefore, replacing $\Gamma_1$ by $\Gamma_1+a^2 \Theta_1$, we can
assume that $\Gamma_1(P_1)=K$ is comaximal with $I_2$. Moreover,
using similar arguments, one can assume that height of $K$ is $n-1$
and therefore, $\dim (B/K)\leq d-(n-1)\leq n-2$.
 Since $K$ is a surjective image of
$P_1$ (a projective $B$-module of rank $n-1$), every minimal prime
ideal of $K$ has height $n-1$. Hence, since 
$I_1=K+(a)$ is an ideal of height $n$, $a$ is a non-zero divisor
modulo the ideal $\sqrt K$. Therefore, by 
(\ref{sub}), there exists a surjection $\Psi : P \surj I_2$ which is
a lift of $\Theta \ot B/I_2$. This proves the result.  
$\hfill \square$
\end{proof}

\begin{remark}
The above theorem has been already proved in the following cases.

$(1)$ In the case $P$ is free (\cite{Bh-Raja-5}, Proposition 3.2). 

$(2)$ For $n=d$ (\cite{Bh-Raja-3}, Theorem 3.3). 

Our approach is different from that of (\cite{Bh-Raja-5} and
\cite{Bh-Raja-3}) and we believe is of some independent interest.
\end{remark}


\chapter{Main Theorem}

In this chapter, we prove our main result (Theorem \ref{general}).  We
begin with the following lemma which is easy to prove (see
\cite{Gopal}, Proposition 1, p. 206). 

\begin{lemma}\label{555}
Let $A$ be a ring and $\p_1\subsetneqq \p_2\subsetneqq \p_3$ be a
chain of prime ideals of $A[T]$. Then, we can not have $(\p_1\cap A) =
(\p_2\cap A)= (\p_3\cap A)$.
\end{lemma}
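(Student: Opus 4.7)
The plan is to reduce to the case of a polynomial ring over a field, where the Krull dimension is $1$, so chains of primes of length $3$ are impossible.

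Set $\p := \p_1 \cap A = \p_2 \cap A = \p_3 \cap A$ and let $S := A \setminus \p$. Since $\p_i \cap S = \varnothing$ for $i=1,2,3$, under the standard order-preserving bijection between primes of $A[T]$ disjoint from $S$ and primes of $A_\p[T] = S^{-1} A[T]$, the extensions $\p_i A_\p[T]$ remain prime and form a strictly increasing chain of length three.

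Next I would pass to the fibre over $\p$. Because $\p_i \cap A = \p$ for each $i$, each extended prime $\p_i A_\p[T]$ contains the ideal $\p A_\p[T]$, so they descend to a strictly increasing chain of three prime ideals in the quotient
$$A_\p[T] / \p A_\p[T] \;\cong\; (A_\p/\p A_\p)[T] \;=\; k(\p)[T],$$
where $k(\p)$ is the residue field at $\p$. But $k(\p)[T]$ is a principal ideal domain, hence has Krull dimension $1$: any ascending chain of prime ideals in $k(\p)[T]$ contains at most two distinct terms (namely $(0)$ and one maximal ideal). This contradicts the existence of three strictly increasing primes and finishes the proof.

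There is no real obstacle; the only point that must be checked, rather than computed, is that the combined localize-and-quotient operation preserves strict inclusions. This is immediate from the two standard order-preserving bijections — primes of $A[T]$ meeting $S$ trivially correspond to primes of $A_\p[T]$, and primes of $A_\p[T]$ containing $\p A_\p[T]$ correspond to primes of $k(\p)[T]$ — so a strict chain upstairs remains a strict chain downstairs.
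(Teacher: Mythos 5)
Your argument is correct: passing to $A_\p[T]$ with $\p=\p_1\cap A$ and then to the fibre $k(\p)[T]$, both via the standard inclusion-preserving bijections, reduces the chain to three distinct primes in a one-dimensional ring, which is impossible. The paper itself gives no proof of this lemma (it only cites Gopalakrishnan's book), and your fibre argument is precisely the standard one given there, so there is nothing to add.
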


\begin{lemma}\label{ht}
Let $A$ be a ring and let $I\subset A[T]$ be an ideal of height
$k$. Then $\hh (I\cap A) \geq k-1$.
\end{lemma}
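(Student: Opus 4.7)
The plan is to reduce the statement to a height inequality on primes and then invoke Lemma~\ref{555}. Since $\sqrt{I \cap A} = \sqrt{I} \cap A = \bigcap_{i} (\p_i \cap A)$, where $\p_1, \ldots, \p_r$ are the minimal primes of $I$ in $A[T]$, every minimal prime of $I \cap A$ in $A$ must coincide with some $\p_i \cap A$. As each $\p_i$ satisfies $\hh \p_i \geq \hh I = k$, it will suffice to prove the following key inequality: for every prime $\p$ of $A[T]$, $\hh(\p \cap A) \geq \hh \p - 1$.

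To establish this, I will let $\p$ have height $h$, set $\q = \p \cap A$, and choose a saturated chain $\p_0 \subsetneq \p_1 \subsetneq \cdots \subsetneq \p_h = \p$ in $A[T]$. Contracting to $A$ yields a weakly increasing sequence $\q_i := \p_i \cap A \subseteq \q$, and $\hh \q$ is bounded below by one less than the number of distinct values appearing among the $\q_i$. Lemma~\ref{555} immediately rules out three consecutive equal contractions. Moreover, whenever $\q_i = \q_{i+1} = \q'$, the one-dimensionality of the fibre ring $\kappa(\q')[T]$ (which is a PID) forces $\p_i = \q'[T]$ and $\p_{i+1}$ to be maximal above $\q'[T]$ inside the fibre over $\q'$; then any prime $\p_{i+2} \supsetneq \p_{i+1}$ in $A[T]$ must have $\q_{i+2} \supsetneq \q'$, which limits how repeats can cluster along the chain.

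A careful bookkeeping using these structural constraints will show that the distinct values among $\q_0, \ldots, \q_h$ form a chain of length at least $h-1$ in $A$ ending at $\q$, yielding $\hh \q \geq h-1$. The main obstacle is exactly this combinatorial count, since Lemma~\ref{555} alone permits several separated ``equal pairs'' and one must use the PID-fibre rigidity to rule these out; the cleanest packaging is instead to localize at $\q$, where the Noetherian dimension identity $\dim A_\q[T] = \dim A_\q + 1$ is available and gives $\hh \p = \hh(\p A_\q[T]) \leq \dim A_\q[T] = \hh \q + 1$ at once. Either route, combined with the reduction to minimal primes above, gives $\hh(I \cap A) \geq k-1$.
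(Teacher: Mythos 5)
Your final (localization) argument is correct, and your reduction to minimal primes is the same as the paper's; where you differ is in the proof of the key inequality $\hh (\p\cap A)\geq \hh \p -1$ for a prime $\p$ of $A[T]$. The paper argues directly with Krull's height theorem: writing $\mq=\p\cap A$ as a minimal prime of $\ma=(a_1,\ldots,a_r)$ with $r=\hh \mq$, it shows that either $\p=\mq[T]$ is minimal over $\ma[T]$, or $\p$ is minimal over $\ma[T]+fA[T]$ for any $f\in \p\setminus \mq[T]$, the minimality being exactly where Lemma \ref{555} enters; this yields the sharper dichotomy $\hh \p=\hh \mq$ or $\hh \mq+1$. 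You instead localize at $\mq$ and quote the Noetherian dimension formula $\dim A_{\mq}[T]=\dim A_{\mq}+1$, getting $\hh \p\leq \hh \mq+1$ at once. That is shorter and proves precisely the inequality needed, but it imports a theorem whose standard proof is essentially the paper's argument (Krull's height theorem plus a no-three-primes lemma), so the paper's version is the more self-contained and slightly more informative of the two; yours is a legitimate citation-based shortcut.

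One correction: your closing remark that ``either route'' works is too optimistic, because the combinatorial route as you sketch it has a genuine gap. It is not true that the distinct contractions $\p_i\cap A$ along a saturated chain of length $h=\hh \p$ form a chain of length at least $h-1$. For instance, in $k[x,y][T]$ the chain $(0)\subsetneqq (xT+y)\subsetneqq (x,y)\subsetneqq (x,y,T)$ is saturated of length $3$, yet its contractions to $k[x,y]$ are $(0),(0),(x,y),(x,y)$: two separated equal pairs and only two distinct values. Lemma \ref{555} and the fibre rigidity you invoke (which do correctly force $\p_i=(\p_i\cap A)[T]$ at each repeat) only prevent three consecutive equal contractions; they do not prevent repeats over several different primes of $A$, so the count along a chosen chain can undershoot the true height. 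The conclusion $\hh \mq\geq h-1$ is of course still valid, but that particular chain cannot establish it, and no amount of bookkeeping on a fixed chain will. So the localization argument (or the paper's minimality argument) is not merely the cleaner packaging --- it is the step that actually closes the proof.
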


\begin{proof}
First, we assume that $I=\p$ is a prime ideal. Then, we claim that 
$$\hh \p = \left\{ \aligned \hh (\p\cap A) ~~~~{\rm if} ~\p = (\p\cap A)[T] \\
\hh (\p\cap A)+1 ~~~{\rm if}~ \p \supsetneqq (\p\cap A)[T] \endaligned
\right.$$  

Any prime chain $\mq_0 \subsetneqq \ldots \subsetneqq \mq_r \subsetneqq
(\p\cap A)$ in $A$ extends to a prime chain $\mq_0[T] \subsetneqq
\ldots \subsetneqq \mq_r[T] \subsetneqq(\p\cap A)[T] \subset \p$ in
$A[T]$. Hence $\hh\p \geq \hh (\p \cap A)$ when $\p=(\p\cap A)[T]$ and $\hh \p
\geq \hh (\p \cap A)+1$ when $\p \supsetneqq (\p\cap A)[T]$.
	Now, let $\hh (\p\cap A) =
r$. Then, by the dimension theorem (see \cite{Matsu} Theorem 13.6),
 $\p\cap A$ is minimal over an
ideal $\ma =(a_1,\ldots,a_r)$. Then $(\p\cap A)[T]$ is
minimal over $\ma[T]$, so $\hh (\p\cap A)[T] \leq r$. Thus, we have
$\hh \p =\hh (\p\cap A)$ in the case $1$. 

Now, assume that $(\p\cap A)[T] \subsetneqq \p$, say $f\in \p-(\p\cap
A)[T]$. We will be done if we can show that $\p$ is a minimal prime
over $\ma [T]+f A[T]$, for then $\hh \p\leq r+1$. Let $\p'$ be a prime
between these. Then $\ma\subset (\p'\cap A) \subset (\p\cap A)$, so
$(\p'\cap A) = (\p\cap A)$, since $(\p\cap A)$ is minimal prime over
$\ma$.  In particular, $(\p\cap A)[T]\subsetneqq \p'\subset\p$. By
($\ref{555}$), we have $\p=\p'$. Thus, we are done in case $2$.

Now, we prove the lemma for any ideal $I\subset A[T]$.  Let $\sqrt I =
\bigcap_1^r \p_i$, where $\p_1,\ldots,\p_r$ are minimal primes over
$I$.  Then $\sqrt {I\cap A} = \bigcap_1^r (\p_i\cap A)$. 
	The  prime ideals minimal over
$(I\cap A)$ occur among $(\p_1\cap A),\ldots,(\p_r\cap A)$. Choose $\p_i$
such that  $\hh (I\cap
A)=\hh (\p_i\cap A)$. Then $\hh (I\cap
A)= \hh (\p_i\cap A)\geq \hh \p_i -1 \geq
\hh I-1$. This proves the lemma. 
$\hfill \square$
\end{proof}

\begin{lemma}\label{004}
Let $A$ be a ring of dimension $d$.  Suppose $K\subset A[T]$ is an
 ideal such that $K+\CJ(A) A[T]=A[T]$.  Then, any maximal ideal of
 $A[T]$ containing $K$ has height $\leq d$.
\end{lemma}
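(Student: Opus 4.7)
The plan is to combine Lemma \ref{ht} with the standard fact that the Jacobson radical of $A$ is contained in every maximal ideal of $A$.

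Let $\mm$ be a maximal ideal of $A[T]$ containing $K$. Since $K \subset \mm$ and $K + \CJ(A)A[T] = A[T]$, we get $\mm + \CJ(A)A[T] = A[T]$. In particular, $\CJ(A)A[T] \not\subset \mm$. Setting $\p = \mm \cap A$, this forces $\CJ(A) \not\subset \p$, for otherwise $\CJ(A)A[T] \subset \p A[T] \subset \mm$, contradicting the above.

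Next I would observe that $\p$ cannot be a maximal ideal of $A$, since every maximal ideal of $A$ contains $\CJ(A)$. Hence there is a maximal ideal $\mm' \supsetneqq \p$ of $A$, giving $\hh \p + 1 \leq \hh \mm' \leq \dim A = d$, so $\hh \p \leq d-1$.

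Finally, I would apply Lemma \ref{ht} to the (prime) ideal $\mm$: it yields $\hh(\mm \cap A) \geq \hh \mm - 1$, i.e.\ $\hh \mm \leq \hh \p + 1 \leq d$. This is the desired conclusion, and there is no real obstacle — the argument is a short bookkeeping combining the earlier height lemma with the defining property of $\CJ(A)$. The only point that needs a moment of care is the implication ``$\p$ non-maximal in $A$ $\Rightarrow$ $\hh \p \leq d-1$'', which uses that $\p$ is strictly contained in some maximal ideal whose height is at most $d$.
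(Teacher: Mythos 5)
Your argument is correct and is essentially the paper's own proof run in the direct rather than the contrapositive direction: the paper supposes $\hh \mm = d+1$, notes that $\mm\cap A$ is then maximal in $A$ and hence contains $\CJ(A)$, and contradicts $K+\CJ(A)A[T]=A[T]$, while you show directly that $\mm\cap A$ misses $\CJ(A)$, is therefore non-maximal of height at most $d-1$, and conclude $\hh\mm\leq d$ via Lemma \ref{ht}. The two proofs rest on exactly the same facts (the contraction of $\mm$ avoids $\CJ(A)$, and height drops by at most one under contraction to $A$), so this is the same approach.
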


\begin{proof}
Suppose $\mm\subset A[T]$ is a maximal ideal of height $d+1$. Then
$\mm\cap A$ is a maximal ideal of $A$. Hence $\mm$ contains
$\CJ(A)$. Since $K+\CJ(A)A[T]=A[T]$, it follows that $K$ is not
contained in $\mm$. This proves the lemma.
$\hfill \square$
\end{proof}
\medskip

The following result is labeled as ``Moving lemma''. Its proof 
is similar to (\cite{Bh-Raja-1}, Lemma 3.6).

\begin{lemma}\label{move}
{\rm (Moving Lemma)}
Let $A$ be a  ring of dimension $d$ and let $n$ be an integer such that
$2n \geq d+3$. Let $I$ be an ideal of $A[T]$ of 
height $n$ and let $J=I\cap A$. 
Let $\wt P$ be  a projective $A[T]$-module of rank $n$ and $f\in A[T]$.
 Suppose 
$\phi : \wt P \surj I/(I^2f)$ be a surjection. Then, we can find a lift
$\Delta \in \Hom_{A[T]}(\wt P,I)$ of $\phi$ such that the ideal
$\Delta(\wt P)=I''$ satisfies the following properties: 

(i) $I = I'' + (J^2f)$.

(ii) $I''= I\cap I'$, where $\hh I' \geq n$.

(iii) $I' + (J^2 f) = A[T]$.
\end{lemma}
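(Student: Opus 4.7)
The plan is to start with an arbitrary lift of $\phi$ and then perturb it, within its coset under $\Hom_{A[T]}(\wt P, I^2 f)$, by Eisenbud--Evans-type modifications: first to force $\hh \Delta(\wt P) = n$, and then to force the needed comaximality with $J$. Since $\wt P$ is projective, I first lift $\phi$ to some $\Delta_0 \in \Hom_{A[T]}(\wt P, I)$; setting $K_0 = \Delta_0(\wt P)$, the lifting condition reads $K_0 + I^2 f = I$, and any other lift has the form $\Delta = \Delta_0 + \mu$ with $\mu(\wt P) \subseteq I^2 f$. The goal is to choose $\mu$ so that $K := \Delta(\wt P)$ has height $n$ and the residual ideal $I' := (K : I)$ is comaximal with $J$.

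For the height, I apply Theorem \ref{EE}, using a reduction to a splittable setting $\wt P \cong \wt P_1 \oplus A[T]$ as in Proposition \ref{m-phil} when needed, to choose $\mu$ so that $\hh K = n$. For the comaximality with $J$: Lemma \ref{ht} together with $2n \geq d+3$ gives $\dim(A[T]/JA[T]) = \dim(A/J) + 1 \leq d - (n-1) + 1 = d - n + 2 \leq n - 1$, so the associated primes of $A[T]/JA[T]$ lying outside $V(I)$ form a finite collection of primes each of height at most $n-1$. A further Eisenbud--Evans-style perturbation of $\Delta$ by an element of $\Hom(\wt P, I^2 f)$ --- permitted because $K_0 + I^2 f = I$ forces the image to be ``large'' modulo any such prime --- moves $\Delta(\wt P)$ out of each such prime, giving $I' + J = A[T]$.

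Having obtained such a $\Delta$, I apply Lemma \ref{002} to the pair $(I_1, I_2) = (K, I^2 f)$, noting that $I^2 f \subseteq I^2$: this yields $K = I \cap I'$ with $I^2 f + I' = A[T]$, hence both $I' + (f) = A[T]$ and $I' + I = A[T]$. Combining $I' + (f) = A[T]$ with the arranged $I' + J = A[T]$, and using that comaximality with $I'$ is multiplicative (if $A_1 + I' = R$ and $A_2 + I' = R$ then $A_1 A_2 + I' = R$), gives (iii): $I' + J^2 f = A[T]$. The height claim $\hh I' \geq n$ in (ii) follows because minimal primes of $K = I \cap I'$ split into minimal primes of $I$ (of height $n$) and minimal primes of $I'$, with the latter necessarily of height $\geq n$ since $\hh K = n$. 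Finally (i) follows formally from (ii) and (iii): writing $1 = y + z$ with $y \in I'$ and $z \in J^2 f$, then for any $x \in I$ we have $x = xy + xz$ with $xy \in I \cap I' = I''$ and $xz \in J^2 f$.

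The main obstacle is the second perturbation step --- arranging $I' + J = A[T]$ while preserving both the lift of $\phi$ (which pins down $\Delta$ modulo $I^2 f$) and the height condition already achieved for $K$. Here the inequality $2n \geq d + 3$ is precisely the margin needed, via the dimension estimate above, for these two perturbations to be simultaneously realizable within $\Delta_0 + \Hom(\wt P, I^2 f)$.
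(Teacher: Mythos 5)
Your overall skeleton (lift, perturb within $\Delta_0+\Hom(\wt P, I^2f)$, apply Lemma \ref{002} to split off a residual $I'$, then deduce (i)--(iii) formally) is sound, and your final assembly steps (multiplicativity of comaximality, the height of $I'$ via minimal primes of $K=I\cap I'$) are correct. The genuine gap is in the one step that is the actual content of the lemma: arranging $I'+J=A[T]$. Your justification rests on the assertion that the associated primes of $A[T]/JA[T]$ lying outside $V(I)$ have height at most $n-1$ because $\dim(A[T]/JA[T])\le n-1$. That is a non sequitur: the dimension of the quotient bounds nothing about heights computed in $A[T]$. The minimal primes of $JA[T]$ are $\p[T]$ with $\p$ minimal over $J$ in $A$, and these can have height anywhere up to $d$, which exceeds $n-1$ whenever $d\ge n$ (e.g. $d=5$, $n=4$). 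Moreover, even granting a finite set of ``bad'' primes, moving the image $\Delta(\wt P)$ off those primes does not yield $I'+J=A[T]$: comaximality requires that \emph{no} prime contain $K+J$ without containing $I$, not merely the associated primes of $A[T]/JA[T]$. Finally, you yourself flag the real difficulty --- making this perturbation compatible with the height condition already achieved and with the constraint $\mu(\wt P)\subseteq I^2f$ --- and then simply assert that $2n\ge d+3$ is ``precisely the margin needed,'' which is the statement to be proved, not an argument.

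For comparison, the paper's proof handles exactly this point by ordering the two perturbations the other way and by a localization trick you do not engage with. First, one picks $b\in(I^2f)$ with $\Phi(\wt P)+(b)=I$ (Lemma \ref{002}) and applies Eisenbud--Evans \emph{modulo $(J^2f)$} at the element $b$; writing $N=I\cap K$ with $K+(b)=A[T]$, the key claim $K+(J^2f)=A[T]$ is proved by contradiction: since $b$ is a multiple of $f$, $K$ is comaximal with $(f)$, so $\hh \ol K=\hh \ol K_{\ol f}=\hh \ol K_{\ol b}\ge n$, while $\dim\bigl((A/J^2)[T]_f\bigr)\le \dim(A/J)+1\le n-1$. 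Note that the localization at $f$ is essential here, because $\dim A[T]/(J^2f)$ itself is \emph{not} bounded by $n-1$ (the factor $f$ contributes components of large dimension); this is precisely the subtlety your dimension count for $A[T]/JA[T]$ does not address. Having secured $I=N+(J^2f)$, one then chooses the second auxiliary element $c$ inside $(J^2f)$ and perturbs once more; Lemma \ref{002} then hands you $I''=I\cap I'$ with $I'+(c)=A[T]$, and since $c\in(J^2f)$ this gives (iii) for free, with the height statement coming from $\hh I''_c\ge n$. So the comaximality is not obtained by prime avoidance at all, but by first forcing surjectivity of the lift modulo $(J^2f)$ and then drawing the second Eisenbud--Evans element from $(J^2f)$ itself. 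As it stands, your proposal is missing this mechanism, and the step you replace it with is incorrect.
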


\begin{proof}
Let $\Phi$ be a
lift of $\phi$. Then $I=\Phi(\wt P) + (I^2 f)$. By (\ref{002}), there
exists $b\in (I^2 f)$ such that $I = \Phi(\wt P)+(b)$.  Let
``bar'' denote reduction modulo $(J^2 f)$. Applying Eisenbud-Evans
theorem (\ref{EE}), there exist $\Phi_1 \in \wt P^*$ such that
if $N = (\Phi+ b\Phi_1)(\wt P)$, 
 then $\hh \ol N_{\ol b} \geq n$. Since $I = N+ (b)$ and $b\in I^2$,
by (\ref{002}), we get $N = I \cap K$ with $K+ (b) = A[T]$. 
We claim that $\ol K = \ol {A[T]}$.

Assume otherwise, i.e. $\ol K$ is a proper ideal of $\ol {A[T]}$.
Since $b$ is a multiple of $f$, $K+(f)=A[T]$. Hence $\hh \ol K_{\ol f}
=\hh \ol K = \hh \ol K_{\ol b} = \hh \ol N_{\ol b} \geq n$. Therefore,
\medskip

$n \leq \hh \ol K_{\ol f} \leq \dim (\ol {A[T]}_{\ol f}) =  \dim 
((A/J^2)[T,f^{-1}])$

$\leq \dim (A/J) +1\leq 
\dim A - \hh J+1$ (by (\ref{lem1})) 

$\leq d-(n-1)+1$ (since $\hh J \geq n-1$, by (\ref{ht}))

$\leq n-1$ (since $2n\geq d+3$).
\medskip

This is a contradiction. Hence
$\ol K = \ol {A[T]}$ and $\ol N = \ol I$, i.e. $K+(J^2f)=A[T]$ and 
$I = N + (J^2 f)$. This proves the claim. 

Write $\Psi = \Phi +b \Phi_1$. Then $\Psi$ is also a lift of $\phi$.
We have $I = \Psi(\wt P)+(J^2f)$.
There exists $c \in (J^2 f)$ such that $I = \Psi(\wt P)+(c)$.
Again, applying (\ref{EE}), there exists $\Psi_1 \in \wt P^*$ such that
if $I''=(\Psi+c \Psi_1)(\wt P)$, then $\hh I_c'' \geq n$. 
	
Write $\Delta=\Psi+ c\Psi_1$. Then $\Delta$ is also a lift of $\phi$.
We have $I'' = \Delta(\wt P)$ and $I= \Delta (\wt P)+(c)$. By
(\ref{002}), we get $I'' = I\cap I'$ with $I' +(c)=A[T]$ and $\hh I' \geq
n$.

Thus, we have $(1)~ I= I'' +(J^2 f)$, 
$(2)~ I''= I \cap I'$, where $\hh I' \geq n$ and 
$(3)~ I'+(J^2 f)=A[T]$.
This proves the result.
$\hfill \square$
\end{proof}

\begin{lemma}\label{011}
Let $C$ be a ring with $\dim (C/\CJ(C)) = r$ and
let $P$ be a projective $C$-module of rank $m\geq r+1$. 
Let $I$ and $L$ be ideals of $C$ such that $L\subset I^2$.
Let $ \phi : P \surj I/L$ be a surjection. Then $ \phi$ can be
lifted to a surjection $\Psi : P \surj I$. 
\end{lemma}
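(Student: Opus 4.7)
My plan is to prove the lemma in two stages: first in the special case $\CJ(C) = 0$, then reduce the general case to this via base change to $\bar{C} = C/\CJ(C)$.

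In the special case $\CJ(C) = 0$ we have $\dim C = r$. I will lift $\phi$ to some $\Phi_0 : P \to I$ (so $\Phi_0(P) + L = I$), apply Lemma \ref{002} with $I_1 = \Phi_0(P)$ and $I_2 = L$ to produce $e \in L$ with $\Phi_0(P) + (e) = I$, and then invoke the Eisenbud--Evans theorem \ref{EE} on $(\Phi_0, e) \in P^\ast \oplus C$ (with $P$ of rank $m$) to obtain $\beta \in P^\ast$ such that $\hh J_e \geq m$, where $J := (\Phi_0 + e\beta)(P)$. Setting $\Phi := \Phi_0 + e\beta$, the difference $e\beta$ maps $P$ into $(e) \subset L$, so $\Phi$ still lifts $\phi$; and one checks $J + (e) = I$ directly. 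Lemma \ref{010} then reduces the surjectivity $J = I$ to showing that no maximal ideal $M$ contains $J$ without containing $I$: if such $M$ existed, then $e \in M$ would force $M \supset J + (e) = I$, whereas $e \notin M$ would force $\hh M \geq \hh M_e \geq m$, contradicting $\hh M \leq \dim C = r < m$.

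For the reduction step, writing bars for reduction modulo $\CJ(C)$, we have $\CJ(\bar C) = 0$, $\dim \bar C = r$, $\bar P$ projective of rank $m \geq r+1$, $\bar L \subset \bar I^2$, and $\bar\phi : \bar P \surj \bar I/\bar L$; the special case applied to $\bar C$ yields a surjection $\bar \Psi : \bar P \surj \bar I$ lifting $\bar \phi$. I lift $\bar \Psi$ through the surjection $I \surj \bar I$ to obtain $\Psi_0 : P \to I$ using projectivity of $P$; the surjectivity of $\bar \Psi$ gives $\Psi_0(P) + (I \cap \CJ(C)) = I$, and Nakayama (\ref{Nakayama}) then yields $\Psi_0(P) = I$. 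To upgrade $\Psi_0$ from a lift of $\bar\phi$ to a lift of $\phi$, set $\epsilon := \phi - \pi_L \circ \Psi_0 : P \to I/L$; this map takes values in the kernel of the natural surjection $I/L \to \bar I/\bar L$, which is the image of $I \cap \CJ(C)$ in $I/L$. Lifting $\epsilon$ through $I \cap \CJ(C) \twoheadrightarrow \ker$ by projectivity yields $\tilde\epsilon : P \to I \cap \CJ(C)$, and $\Psi := \Psi_0 + \tilde\epsilon$ is a lift of $\phi$ by construction; a second Nakayama argument shows $\Psi$ remains surjective onto $I$.

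The main obstacle will be the reduction step: the Eisenbud--Evans height argument in the special case essentially needs $\dim C = r$, which can easily fail when $\CJ(C) \neq 0$ (since $\dim C$ may well exceed $r$). The delicate point in transporting the solution from $\bar C$ back to $C$ is preserving both surjectivity onto $I$ and the exact lifting condition for $\phi$ rather than merely $\bar \phi$; this is precisely where the two applications of Nakayama's lemma become essential.
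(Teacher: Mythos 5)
Your special case ($\CJ(C)=0$) is fine and matches the paper's Eisenbud--Evans plus Lemma \ref{010} argument, but the reduction step contains a genuine gap: neither appeal to Nakayama is valid. The kernel of $I \ra \bar I$ is $I\cap \CJ(C)$, not $\CJ(C)\cdot I$, so from $\Psi_0(P)+(I\cap\CJ(C))=I$ you cannot conclude $\CJ(C)\cdot(I/\Psi_0(P))=I/\Psi_0(P)$, which is what Lemma \ref{Nakayama} needs. In fact the intermediate claim ``$\Psi_0(P)=I$'' is simply false in general: take $C=k[[t]]$, $I=(t)$, $L=(t^2)$, $P=C$, $\phi(1)=t+L$. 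Then $\bar I=0$, $\bar\Psi=0$, and $\Psi_0=0$ is a legitimate lift of $\bar\Psi$ through $I\surj \bar I$, yet $\Psi_0(P)=0\neq I$, even though $\Psi_0(P)+(I\cap\CJ(C))=I$. The same objection applies verbatim to your ``second Nakayama argument'' for $\Psi=\Psi_0+\tilde\epsilon$. The underlying issue is that your reduction step never uses the hypothesis $L\subset I^2$ over $C$ itself; without it the desired implication is false, so no Nakayama-type argument can close this step.

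The repair is exactly the device the paper uses, namely Lemma \ref{010}. Skip the (false and unnecessary) claim that $\Psi_0$ is surjective; go directly to your corrected map $\Psi=\Psi_0+\tilde\epsilon$, which lifts $\phi$. Then $\Psi(P)+L=I$, hence $\Psi(P)+I^2=I$; and since $\Psi\equiv\Psi_0$ modulo maps into $I\cap\CJ(C)$, also $\Psi(P)+(I\cap\CJ(C))=I$. Any maximal ideal containing $\Psi(P)$ contains $\CJ(C)\supseteq I\cap\CJ(C)$, hence contains $I$, and Lemma \ref{010} gives $\Psi(P)=I$. Note also that the paper's proof avoids your fix-up step altogether: it applies Eisenbud--Evans over $C/\CJ(C)$ but modifies the original lift only by $e\Theta$ with $e\in L$, so the modified map is automatically still a lift of $\phi$, and two applications of Lemma \ref{010} finish the proof; your two-stage base-change structure is workable but only after replacing both Nakayama citations by the Lemma \ref{010} argument just described.
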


\begin{proof}
Let $\Psi : P \ra I$ be a lift of $\phi$. Then $\Psi(P) + L= I$.
Since $L\subset I^2$, by (\ref{002}), there exists $e\in L$ such that
$\Psi(P) + (e)=I$.

Let ``tilde'' denote reduction modulo $\CJ(C)$. 
Then $\wt \Psi (\wt P) +(\wt e) = \wt I$. Applying Eisenbud-Evans theorem
(\ref{EE}) to the element $(\wt \Psi,\wt e)$ of $\wt {P^*} \op \wt C$,
we see that there exists $\Theta \in P^*$ such that if
$K=(\Psi+e\,\Theta)(P)$, then $\hh \wt K_{\wt e} \geq m$. As $\dim \wt C
=r\leq m-1$, we have $\wt K_{\wt e}=\wt C_{\wt e}$.
 Hence ${\wt e\,}^l \in \wt K$ for some positive integer $l$.
Since $\wt K + (\wt e)=\wt I$ and  
$e\in L\subset I^2$, by (\ref{010}), $\wt K=\wt I$.
Since $e\in L$, the element $\Psi + e\, \Theta$ is also a lift of
$\phi$. Hence, replacing $\Psi$ by $\Psi + e\Theta$, we can assume
that $\wt {\Psi(P)} = \wt I$ i.e. $\wt \Psi : \wt P \surj \wt I$ is a
surjection. Therefore, since $\wt I = (I+\CJ(C))/\CJ(C)=I/(I\cap \CJ(C))$,
 we have $\Psi(P) + (I\cap \CJ(C)) = I$. We also have $\Psi(P)+ L = I$.
Therefore, since $L\subset I^2$, by (\ref{010}), $\Psi(P)=I$.
$\hfill \square$
\end{proof}
\medskip

As a consequence of (\ref{011}), we have the following result.

\begin{lemma}\label{mand1}
Let $A$ be a ring with $\dim( A/\CJ(A))=r$.  Let $I$ and $L$
be ideals of $A[T]$ such that $L\subset I^2$ and $L$ contains a monic
polynomial. Let $P'$ be a projective $A[T]$-module of rank
$m \geq r+1$. Let $\phi: P'\op A[T] \surj I/L$ be a surjection. Then, we can
lift $\phi$ to a surjection $\Phi : P'\op A[T] \surj I$ with $\Phi(0,1)$ a
monic polynomial.
\end{lemma}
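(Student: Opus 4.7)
The plan is to first adjust the given lift of $\phi$ so that its last coordinate is a monic polynomial, then pass to the quotient ring modulo this monic in order to apply Lemma \ref{011}, and finally lift back to $A[T]$.

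First I would pick any lift $\Psi_0 = (\psi, a_0) : P' \oplus A[T] \to I$ of $\phi$. Since $L$ contains a monic polynomial $f$, replacing $a_0$ by $a = a_0 + f^N$ for $N$ large enough yields a new lift $\Psi = (\psi, a)$ whose last coordinate $a$ is monic, and since $f^N \in L$, $\Psi$ still lifts $\phi$. The problem thus reduces to modifying the first coordinate $\psi$ so as to make $\Psi$ surjective onto $I$ without disturbing $a$.

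Next I would pass to the quotient ring $C = A[T]/(a)$. The key observation is that, because $a$ is monic, any maximal ideal $\mm$ of $A[T]$ containing $(a)$ meets $A$ in a maximal ideal (as $A[T]/\mm$ is a field integral over $A/(\mm \cap A)$), hence contains $\CJ(A)$; therefore $C/\CJ(C)$ is a quotient of $(A/\CJ(A))[T]/(\bar a)$, which is a finite $A/\CJ(A)$-module, giving $\dim(C/\CJ(C)) \leq r$. Reducing the identity $\psi(P') + (a) + L = I$ modulo $(a)$, I obtain $\bar \psi (\bar P') + \bar L = \bar I$ in $C$, where $\bar P' = P'/(a)P'$, $\bar I = I/(a)$, and $\bar L$ is the image of $L$. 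Thus $\bar \psi$ induces a surjection $\bar P' \surj \bar I / \bar L$, and $\bar L \subseteq \bar I^{\,2}$. Since $\mathrm{rank}\,\bar P' = m \geq r+1 \geq \dim(C/\CJ(C)) + 1$, Lemma \ref{011} applied to $C$ yields a surjection $\bar \Xi : \bar P' \surj \bar I$ with $\bar \Xi \equiv \bar \psi \pmod{\bar L}$.

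Finally, using projectivity of $P'$ I would lift $\bar \Xi$ to some $\Xi' : P' \to I$. Then $\Xi' - \psi$ maps $P'$ into $L + (a)$, and by projectivity of $P'$ combined with the surjection $L \oplus A[T] \surj L + (a)$, $(l,t) \mapsto l + ta$, I can decompose $\Xi' - \psi = \theta_1 + a \theta_2$ with $\theta_1 : P' \to L$ and $\theta_2 : P' \to A[T]$. Setting $\Xi'' := \Xi' - a\theta_2 = \psi + \theta_1$, the map $\Phi := (\Xi'', a) : P' \oplus A[T] \to I$ satisfies $\Phi \equiv \Psi \pmod{L}$ (hence lifts $\phi$), has monic last coordinate $\Phi(0,1) = a$, and is surjective because $\Phi(P' \oplus A[T]) = \Xi''(P') + (a) = \Xi'(P') + (a) = I$, the last equality coming from $\bar \Xi$ being surjective onto $\bar I$. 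The main obstacle is realising that the monic polynomial in $L$ is precisely what allows one to bound $\dim(C/\CJ(C))$ by $r$, thereby bringing Lemma \ref{011} into play; once that bound is established the lift-back is a routine projectivity argument.
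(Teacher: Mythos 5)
Your proposal is correct and follows essentially the same route as the paper's proof: make the last coordinate monic by adding a large power of the monic polynomial in $L$, pass to $C=A[T]/(a)$, bound $\dim(C/\CJ(C))$ by $r$, apply Lemma \ref{011} there, and then correct the first coordinate by an element of $\Hom(P',L)$ plus a multiple of $a$ to get a surjective lift with monic $\Phi(0,1)$. The only cosmetic difference is that you bound $\dim(C/\CJ(C))\leq r$ via $\CJ(A)C\subseteq\CJ(C)$ and finiteness of $(A/\CJ(A))[T]/(\bar a)$ over $A/\CJ(A)$, while the paper uses integrality of $A\hookrightarrow C$ to get equality; both suffice.
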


\begin{proof}
Let $\Phi'=(\Theta,g(T))$ be a lift of $\phi$. Let $f(T)\in L$ be a
monic polynomial. By adding some large power of $f(T)$
to $g(T)$, we can assume that the lift $\Phi'=(\Theta,g(T))$
of $\phi$ is such that $g(T)$ is a monic polynomial.
 Let $C=A[T]/(g(T))$. Since $A \inj C$ is an integral extension, we
have $\CJ(A) = \CJ(C) \cap A$ and hence $A/\CJ(A) \inj C/\CJ(C)
$ is also an integral extension. Therefore, $\dim (C/\CJ(C)) =r$.
 
Let ``bar'' denote reduction modulo $(g(T))$. Then, $\Theta$ induces a
surjection $\alpha : \ol {P'}  \surj \ol I/\ol L$, which, by (\ref{011}),
can be lifted to a surjection from $\ol {P'}$ to $\ol I$. 
Therefore, there exists a map $\Gamma : P' \ra I$ such that
$\Gamma(P')+ (g(T))=I$ and $(\Theta-\Gamma)(P')=K \subset L+(g(T))$.
Hence $\Theta - \Gamma \in K{P'}^*$. This shows that $\Theta -
\Gamma = \Theta_1 +g(T)\,\Gamma_1$ for some $\Theta_1 \in L{P'}^*$ and
$\Gamma_1 \in {P'}^*$. 

Let
$\Phi_1 = \Gamma + g(T)\, \Gamma_1$ and let $\Phi = (\Phi_1,g(T))$.
Then, $\Phi(P'\op A[T]) = \Phi_1(P')+(g(T)) = \Gamma(P')+(g(T)) = I$.
Thus $\Phi : P' \op A[T] \surj I$ is a surjection. Moreover, 
 $\Phi(0,1) = g(T)$ is a monic polynomial. 
Since $\Phi' - \Phi =
(\Theta -\Phi_1,0)=(\Theta_1,0)$, where $\Theta_1 \in L{P'}^*$ and $\Phi'$
is a
lift of $\phi$, we see that $\Phi$ is a (surjective) lift of $\phi$.
$\hfill \square$
\end{proof}

\begin{lemma}\label{0sub}
Let $A$ be a ring of dimension $d$ and let $I,I_1\subset A[T]$ be two
comaximal ideals of height $n$, where $2n \geq d+3$. Let $P=P_1\op A$
be a projective $A$-module of rank $n$. Assume $J=I\cap A\subset
\CJ(A)$ and $I_1+(J^2T)=A[T]$. Let $\Phi : P[T] \surj I\cap I_1$ and
$\Psi : P[T] \surj I_1$ be two surjections with $\Phi\ot
A[T]/I_1=\Psi\ot A[T]/I_1$. Then, there exists a surjection $\Lambda : P[T]
\surj I$ such that $ (\Phi-\Lambda) (P[T]) \subset (I^2T)$.
\end{lemma}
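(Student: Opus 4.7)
The plan is to apply Lemma \ref{sub} with $B = A[T]$, taking the roles $(I_1^{\mathrm{sub}}, I_2^{\mathrm{sub}}) = (I_1, I)$, $(\Phi^{\mathrm{sub}}, \Psi^{\mathrm{sub}}) = (\Psi, \Phi)$, and $L = I^2 T \subset I^2$. The strategy parallels the Subtraction Principle (\ref{subtraction}), with the twist that $\dim A[T] = d+1$ is one too large for the naive application; the extra hypotheses $J \subset \CJ(A)$ and $I_1 + (J^2 T) = A[T]$ are exactly what close the gap. The first step is to deduce $I_1 + I^2 T = A[T]$: since $J^2 T \subset (T)$, we have $I_1 + (T) = A[T]$, and combined with $I + I_1 = A[T]$ a routine multiplication yields $I_1 + IT = A[T]$, whence $I = II_1 + I^2 T$ and so $I_1 + I^2 T = I_1 + I = A[T]$.

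Write $\Psi = (\Psi_1, a)$ with $\Psi_1 : P_1[T] \to A[T]$ and $a = \Psi(0,1)$, set $K := \Psi_1(P_1[T])$, and note $K + (a) = I_1$. The relation $I_1 + I^2T = A[T]$ together with a standard squaring argument yields $(\bar\Psi_1, \overline{a^2})$ unimodular modulo $I^2T$. I apply Bass's theorem (\ref{1bass}), together with an Eisenbud-Evans adjustment via Theorem \ref{EE}, to modify $\Psi_1$ by an element of $a^2 P_1[T]^*$ so that the new $K$ satisfies both $K + I^2T = A[T]$ and $\hh K = n - 1$; since the modification lies in $I_1^2$, the agreement $\Phi \equiv \Psi \pmod{I_1}$ is preserved. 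The key dimension input is $\dim((A[T]/I^2T)/\CJ) \leq d - n + 1$: every maximal ideal of $A[T]$ containing $I^2 T$ must contain $I$ or $T$, and in either case its contraction to $A$ is a maximal ideal containing $J \subset \CJ(A)$, so $JA[T] \subset \CJ(A[T]/I^2T)$. The quotient $(A[T]/I^2T)/\CJ$ is therefore a quotient of $(A/J)[T]/(\bar T\bar I^2)$, whose spectrum $V(\bar T) \cup V(\bar I)$ has each component of dimension at most $\dim(A/J) \leq d - \hh J \leq d - n + 1$ by Lemma \ref{ht}. Since $2n \geq d+3$, this is at most $n - 2 < n - 1 = $ rank of $\bar P_1$, and Bass applies.

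To apply Lemma \ref{sub} I verify the remaining hypotheses: $L \subset I^2$ and $K + L = A[T]$ by construction; and $a$ is a non-zero-divisor modulo $\sqrt K$ since every minimal prime of $K$ has height $n-1$ while $\hh(K + (a)) = \hh I_1 = n$. The nontrivial condition $n-1 > \dim((A[T]/K)/\CJ)$ is proved by the same dimension count: for $n \geq 3$, $\hh K \geq 2$ forces every maximal ideal of $A[T]$ containing $K$ to have height $\geq 2$ and hence contain $JA[T]$; and the image $\bar K$ of $K$ in $(A/J)[T]$ is nonzero, because $K + (T) = A[T]$ (which follows from $K + I^2T = A[T]$) forces some $k \in K$ to have constant term $1 \notin J$. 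This gives $\dim((A[T]/K)/\CJ) \leq \dim(A/J) \leq d - n + 1 \leq n - 2$. Lemma \ref{sub} then provides the desired surjection $\Lambda : P[T] \surj I$ with $\Lambda \equiv \Phi \pmod{I^2T}$. The main obstacle is precisely this dimension count, which fully exploits the interaction of $J \subset \CJ(A)$ with the derived relation $K + (T) = A[T]$; the degenerate case $n = 2$ (which forces $d = 1$) requires separate handling to rule out height-one maximal ideals of $A[T]$ that may not contain $JA[T]$.
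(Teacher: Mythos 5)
Your overall plan (rewrite the data so that Lemma \ref{sub} applies with $B=A[T]$, $\Phi^{\mathrm{sub}}=\Psi$, $\Psi^{\mathrm{sub}}=\Phi$, $L=(I^2T)$) is the same as the paper's, and your Bass step over $A[T]/(I^2T)$ is fine as far as it goes. The gap is in your verification of hypothesis $(2)$ of \ref{sub}, namely $n-1>\dim\bigl((A[T]/K)/\CJ(A[T]/K)\bigr)$ for $K=\Psi_1(P_1[T])$. You argue that every maximal ideal of $A[T]$ containing $K$ contains $JA[T]$ ``because it has height $\geq 2$''; height gives no such thing, and in fact the claim is provably false in your situation: $K\subset I_1$ and $I_1+(J^2T)=A[T]$ force $I_1+JA[T]=A[T]$ (if a maximal ideal contained both $I_1$ and $J$ it would contain $(J^2T)$), so in the main case $I_1\neq A[T]$ there are maximal ideals containing $K$ (those containing $I_1$) which do \emph{not} contain $J$. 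Consequently your bound $\dim((A[T]/K)/\CJ)\leq\dim(A/J)$ is unsupported, and it cannot be rescued from what you have arranged: $K+(I^2T)=A[T]$ only excludes maximal ideals over $K$ that contain $I$ or $T$, and does not rule out height-$(d+1)$ maximal ideals over $K$, so the naive bound is only $\dim(A[T]/K)\leq (d+1)-(n-1)\leq n-1$, one more than what \ref{sub} needs.

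This is exactly why the paper works harder at this point: it arranges the \emph{stronger} condition $\Psi_1(P_1[T])+(J^2T)=A[T]$ together with $f=\Psi(0,1)\in (J^2T)$, using Plumstead's theorem and Lindel's transitivity theorem (\ref{lindel}) over $D=A[T]/(J^2T)$ plus lifting of transvections (\ref{trans}); plain Bass cannot do this because $\dim(D/\CJ(D))$ is only bounded by $\dim(A/J)+1\leq n-1$, which equals the rank of $P_1$ — the polynomial structure of $D/JD=(A/J)[T]$ and $J\subset\CJ(A)$ are what close this gap. With $K+(J^2T)=A[T]$ one gets $K+\CJ(A)A[T]=A[T]$, so Lemma \ref{004} kills height-$(d+1)$ maximal ideals over $K$ and yields $\dim(A[T]/K)\leq d-(n-1)\leq n-2$, which is hypothesis $(2)$. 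A secondary flaw in your write-up: after the Bass step you modify $\Psi_1$ by elements of $a^2P_1[T]^*$ to force $\hh K=n-1$, but such a modification only preserves $K$ modulo $(a^2)$ and need not preserve the comaximality $K+(I^2T)=A[T]$ you just obtained; in the paper this is harmless because the Eisenbud--Evans modification is by multiples of $f\in(J^2T)$, so comaximality with $(J^2T)$ is automatically preserved. So the proposal is not a complete proof: the decisive missing ingredient is the Plumstead--Lindel normalization making $K$ comaximal with $(J^2T)$, which is the heart of the paper's argument.
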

 
\begin{proof}
We first note that, to prove the lemma, we can replace $\Phi$ and
$\Psi$ by $\Phi\,\Delta$ and $\Psi\, \Delta$, where
$\Delta$ is an automorphism of $P[T]$. 

Let $\Psi = (\Psi_1,f)$. Let ``bar'' denote reduction modulo $(J^2T)$
and let $D = A[T]/(J^2T)$. Since $I_1 + (J^2T)=A[T]$, it follows that
$(\ol \Psi_1,\ol f)\in \Um(\ol {P_1[T]^*} \op D)$. 
Since $J\subset \CJ(A)$, we have $JD \subset \CJ(D)$. Moreover, 
$D/JD=(A/J)[T]$ and $\hh J \geq n-1$, by (\ref{ht}). Hence
$\dim (A/J) \leq \dim A -\hh J\leq d-(n-1) \leq n-2$. 
Therefore, since rank $P_1 =n-1$, by (\cite{P}, Corollary 2), 
$\ol {P_1[T]}$ has a unimodular element. By Lindel's result 
(\ref{lindel}), $E(\ol {P_1[T]^*} \op D)$ acts
transitively on $\Um(\ol {P_1[T]^*} \op D)$ 
and by (\ref{trans}), any element of $E(\ol {P_1[T]^*} \op D)$ can
be lifted to an automorphism of $P_1[T]\op A[T]$. 
Putting above facts together, we can assume, replacing $(\Psi_1,f)$ by
$(\Psi_1,f)\Delta$ ($\Delta$: suitable automorphism of $P[T]$) if 
necessary, that $\Psi_1(P_1[T]) + (J^2T)A[T] = A[T]$ and $f\in (J^2T)$.
Moreover, applying Eisenbud-Evans theorem (\ref{EE}), we can assume,
that $\hh (\Psi_1(P_1[T])) =n-1$. 

Since $J\subset \CJ(A)$ and $\Psi_1(P_1[T])+(J^2T)=A[T]$, we have
$\Psi_1(P_1[T])+\CJ(A)A[T]=A[T]$ and therefore, by (\ref{004}), 
any maximal ideal
of $A[T]$ containing $\Psi_1(P_1[T])$ is of height $\leq d$. 
	Hence, by (\ref{lem1}), $\dim
(A[T]/\Psi_1(P_1[T])) \leq d - \hh (\Psi_1(P_1[T]))
\leq d-(n-1) \leq n-2$.
	Hence, applying (\ref{sub}),
we get a surjection $\Lambda : P[T] \surj I$ such that
$(\Phi-\Lambda) (P[T]) \subset (I^2T)$.
$\hfill \square$
\end{proof}

\begin{lemma}\label{021}
Let $A$ be a  ring of dimension $d$ and let $n$ be an integer such
that $2n \geq d+3$. Let $I$ be an ideal of $A[T]$ of height $n$ such that
$I+\CJ(A)A[T]=A[T]$.
Assume that $\hh \CJ(A)\geq n-1$. Let $P$ be a projective $A$-module
of rank $n$ and let  $\phi : P[T] \surj I/I^2$ be a surjection. If
the surjection $\phi\ot A(T) : P(T)
\surj IA(T)/I^2A(T)$ can be lifted to a surjection from
$P(T)$ to $IA(T)$, then $\phi$ can be lifted to a
surjection $\Phi : P[T] \surj I$.
\end{lemma}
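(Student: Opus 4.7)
The plan is to combine the Moving Lemma (\ref{move}) with the refined Subtraction Principle (\ref{sub}), using the $A(T)$-lift hypothesis to feed a monic polynomial into the Moving Lemma.

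By hypothesis, fix a monic $g \in A[T]$ and a surjection $\Theta : P[T]_g \surj I_g$ lifting $\phi_g$. Patching $\Theta$ against an arbitrary lift of $\phi$ produces a surjection $\wt\phi : P[T] \surj I/(I^2 g)$ that still reduces to $\phi$ modulo $I^2$. Applying the Moving Lemma (\ref{move}) to $\wt\phi$ with $f = g$ then yields a lift $\Delta : P[T] \to I$ of $\phi$ with $\Delta(P[T]) = I \cap I'$, for some ideal $I'$ of height at least $n$ satisfying $I' + (J^2 g) = A[T]$. In particular, $I$ and $I'$ are comaximal and $I' + (g) = A[T]$.

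Next, construct an auxiliary surjection $\Gamma : P[T] \surj I'$ with $\Gamma \equiv \Delta \pmod{(I')^2}$. The bounds $\hh \CJ(A) \geq n-1$ and $2n \geq d+3$ force $\dim A/\CJ(A) \leq n-2$, so Serre's theorem (\ref{Serre}) furnishes a unimodular element of $P$; write $P = P_1 \op A$ and fix a unimodular $\ga \in (P[T])^*$. Pick $u \in I$ and $v \in I'$ with $u + v = 1$. For any $x \in I'$ one has $xu \in I \cdot I' = I \cap I'$ and $xv = xv(u+v) = xvu + xv^2 \in I \cdot (I')^2 + (v^2) \subset (I \cap I') + (v^2)$; this gives $I' \subset (I \cap I') + (v^2)$, so equality holds. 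Setting $\Gamma = \Delta + v^2 \ga$ therefore yields $\Gamma(P[T]) = (I \cap I') + v^2 A[T] = I'$, while $\Gamma - \Delta$ takes values in $(v^2) \subset (I')^2$, so $\Gamma \equiv \Delta \pmod{(I')^2}$.

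Finally, apply Lemma \ref{sub} to the surjections $\Gamma : P[T] \surj I'$ and $\Delta : P[T] \surj I' \cap I$, with $I_1 = I'$, $I_2 = I$, and $L = I^2$. Before doing so, adjust $\Gamma$ by an Eisenbud--Evans maneuver applied to the decomposition $\Gamma(P_1[T]) + (\Gamma(0,1)) = I'$ (of height $n$), arranging $\hh \Gamma(P_1[T]) \geq n-1$ and $\Gamma(0,1) \in I^2$; the latter ensures $\Gamma(P_1[T]) + L = A[T]$ via $I' + I^2 = A[T]$. The principal obstacle lies in verifying the dimension condition $n - 1 > \dim(\ol B/\CJ(\ol B))$ of Lemma \ref{sub}, where $\ol B = A[T]/\Gamma(P_1[T])$; this is where the hypotheses $\hh \CJ(A) \geq n-1$, $I + \CJ(A) A[T] = A[T]$, and the coprimality of $I'$ with the monic $g$ are crucially used. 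Lemma \ref{sub} then produces the desired lift $\Phi : P[T] \surj I$ of $\phi$.
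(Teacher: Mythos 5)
The decisive gap is your first step. Producing a surjection $\wt\phi : P[T] \surj I/(I^2g)$ lifting $\phi$ is equivalent to producing a lift $\Phi : P[T] \ra I$ of $\phi$ with $\Phi(P[T])+(g)=A[T]$: since $I+\CJ(A)A[T]=A[T]$ forces $I+(g)=A[T]$, at a maximal ideal containing $I$ the monic $g$ is a unit and Nakayama gives local surjectivity for \emph{any} lift, while at a maximal ideal $\mm$ not containing $I$ the needed condition is exactly $\Phi(P[T])_{\mm}+(g)_{\mm}=A[T]_{\mm}$. So the problematic maximal ideals are precisely those containing $g$, and there the hypothesis --- a surjective lift of $\phi_g$ over $A[T]_g$ --- carries no information. ``Patching $\Theta$ against an arbitrary lift'' does not produce such a $\Phi$: the image of a sum of two homomorphisms is not the sum of their images, and the paper's patching tool (\ref{3.2}) requires a coprime pair $s,t$ with the surjective lift over $B_t$ and only returns surjectivity onto $I/(sL)$; you are in effect invoking it with $s=t=g$. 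Tellingly, after this step your argument never uses $\Theta$ again, so if the step were correct the lemma would follow from the mere comaximality of $I$ with monic polynomials (which is automatic from $I+\CJ(A)A[T]=A[T]$), i.e.\ without the $A(T)$-lifting hypothesis at all --- far too strong.

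There are further breaks downstream. From $\Gamma=\Delta+v^2\ga$ one gets only $\Gamma(P[T])+(v^2)=I'$, not $\Gamma(P[T])=(I\cap I')+(v^2)$; thus $\Gamma$ is merely a lift of a surjection onto $I'/(I')^2$, and promoting that to an honest surjection onto $I'$ is itself the hard point, not a construction you can write down in one line. Likewise the dimension estimate $n-1>\dim(\ol B/\CJ(\ol B))$ demanded by (\ref{sub}) is exactly what you leave unverified; in the paper it comes for free because the relevant ideal $\Gamma(\wt Q)+(h_1)$ contains a monic, so $A[T]$ modulo it is integral over a quotient of $A$, whereas in your construction $I'$ is comaximal with $(g)$ and there is no reason for $\Gamma(P_1[T])$ to contain a monic. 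Compare the paper's route: it clears denominators to obtain a map lifting $f^N\phi$ with $N$ even, uses (\ref{m-phil}) to make its image $I\cap I'$ with the residual ideal $I'$ containing a power of the monic $f$ (this is where the $A(T)$-lift is genuinely used), lifts onto $I'$ by (\ref{mand1}), subtracts with (\ref{sub}), and only at the end converts the surjective lift of $f^N\phi$ into one of $\phi$ via Bass' theorem, (\ref{trans}) and the Krusemeyer lemma (\ref{krusem}); your outline has no analogue of that final conversion because it claims, incorrectly, to lift $\phi$ itself from the start.
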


\begin{proof}
Recall that $A(T)$ denote the ring obtained from $A[T]$ by inverting all
monic polynomials and $P(T)=P[T]\ot A(T)$.
It is easy to see that, under the hypothesis of the lemma, there
exists a monic polynomial $f(T)\in A[T]$ and a surjection 
$\Phi' : P[T]_f \surj I_f$ such that $\Phi'$ is a lift of $\phi_f$.
 Since $I+\CJ(A)A[T] = A[T]$, $I$ is not 
contained in any maximal ideal of $A[T]$ which
contains a monic polynomial and hence $f(T)$ is a unit modulo $I$. 

Since $\dim (A/\CJ(A))\leq \dim A - \hh \CJ(A) \leq d-(n-1)\leq n-2$, 
by Serre's result (\ref{Serre}), $P$ has a free
direct summand of rank 2, i.e. $P=Q \op A^2$.

For the sake of simplicity of notation, we write $R$ for $A[T]$,
$\wt Q$ for $Q[T]$ and $\wt P$ for $P[T]$. Since 
$\Phi' \in \Hom_{R_f}({\wt P}_f,I_f)$, there
exists a positive even integer $N$ such that  
$\Phi'' =f^N \Phi' \in \Hom_{R}(\wt P,I)$. It is easy to see, by the 
very construction of $\Phi''$, that the induced map ${\Phi''}_f$ from $\wt
P_f$ to $I_f$ is a surjection. Since $f$ is a unit modulo $I$, the
canonical map $R/I \ra R_f/I_f$ is an isomorphism and hence 
$I/I^2 = I_f/{I_f}^2$. Putting these
facts together, we see that $\phi'' = \Phi'' \ot R/I : \wt P \surj
I/I^2$ is a surjection. Moreover, $\phi''=f^N \phi$.

\paragraph{Claim:} $\phi'' : \wt P \surj I/I^2$ can be lifted to a
surjection from $\wt P$ to $I$. 
\medskip
 
Assume the claim. Let $\gL : \wt P \surj I$ be a lift of $\phi''$.
Write $D=R/(f(T))$. Since $(f(T))+I=R$ and $\Lambda(\wt P) = I$,
 $\Lambda \ot D$ is a unimodular element of ${\wt P}^*\ot D$.  Let
 $\Lambda = (\gl,d_1,d_2)$, where $\gl\in \Hom_R(\wt Q,R)$ and
 $d_1,d_2\in R$.

Since $f(T)$ is monic, $A \hookrightarrow D$ is an integral extension
and hence $A/\CJ(A) \hookrightarrow D/\CJ(D)$ is also an integral
extension. Hence $\dim (D/\CJ(D)) =\dim
(A/\CJ(A)) \leq n-2$.
Therefore, in view of Bass' result (\ref{1bass}), 
the unimodular element $(\gl,d_1,d_2)\ot D$  can be taken to 
$(0,0,1)$ by an element of $E({\wt P}^* \ot D)$. By (\ref{trans}),
every element of $E({\wt P}^* \ot D)$ can be lifted to an automorphism of
${\wt P}^*$. Moreover, since $I+(f)=R$, a lift can be chosen to be an
automorphism of ${\wt P}^*$ which is identity modulo $I$. 

The upshot of the above discussion is that there exists an
automorphism $\Omega$ of $\wt P$ such that $\Omega$ is identity
modulo $I$ and $\Omega^*(\Lambda)=\Lambda \,\Omega =(0,0,1)$ modulo $(f(T))$.
Therefore, replacing $\Lambda$ by $\Lambda\,\Omega$,
 we can assume that $\Lambda=(\gl,d_1,d_2)$ with $1-d_2 \in (f(T))$. 

Recall that our aim is to lift the surjection $\phi : \wt P \surj I/I^2$
to a surjection $\Phi : \wt P \surj I$. Recall also that the
 surjection $\Lambda : \wt P \surj I$ is a lift of 
 $f^N \phi : \wt P \surj I/I^2$. 
 
Let $g\in R$ be such that $fg=1$ modulo $(d_2)$ and hence modulo $I$. 
Let $\ma = (g^N d_1,d_2)$. Then, since $N$ is even, by (\ref{krusem}), 
$\ma = (e_1,e_2)$ with $e_1-g^N d_1 \in {\ma}^2$ and $e_2 - g^N d_2
\in {\ma}^2$. Since $\Lambda=(\gl,d_1,d_2)$, $\Lambda(\wt P)=I$ and
$Rg+R d_2=R$, we see that
$$I=\gl(\wt Q)+ (d_1,d_2)= g^N \gl (\wt Q) + (g^Nd_1,d_2)= g^N\gl
(\wt Q) + (e_1,e_2).$$
Let $\Phi = (g^N \gl,e_1,e_2) \in \Hom_R(\wt P,I)$. From the above
equality, we see that $\Phi : \wt P \surj I$ is a surjection.
Moreover, since $1-fg\in I$, $\Phi \ot R/I = g^N \Lambda \ot R/I$ and
$\Lambda \ot R/I = f^N \phi \ot R/I$, $\Phi$ is a (surjective)
lift of $\phi$. This proves the lemma.
 
\paragraph{Proof of the claim:}
Recall that $\Phi'' : \wt P \ra I$ such that the induced map ${\Phi''}_f :
\wt P_f \surj I_f$ is a surjection and $\phi'' = \Phi''\ot R/I : \wt P \surj
I/I^2$.

We first note that if $\Delta$ is an automorphism of $\wt P$ and if
the surjection $\phi'' \Delta : \wt P \surj I/I^2$ has a surjective
lift from $\wt P$ to $I$, then so also has $\phi''$. We also note that,
by (\ref{trans}), any element of $E(\wt P/I\wt P)$ can be lifted to
an automorphism of $\wt P$. Keeping these facts in mind, we proceed
to prove the claim.

By (\ref{m-phil}), there exists $\Delta_1 \in E(\wt P_f)$ such that
$(1)~ \Psi={\Delta_1}^*(\Phi'') \in \Hom_R(\wt P,I)$ and  $(2)~ \Psi(\wt
P)$ is an ideal of $R$ of height $n$, where ${\Delta_1}^*$ is an
element of $E({\wt P_f}^*)$ induced from $\Delta_1$.

Since $\Psi_f(\wt P_f)=I_f$ and $f$ is a unit modulo $I$, we have 
 $I=\Psi(\wt P)+I^2$. Hence, by (\ref{002}), $\Psi(\wt P)=I_1= I\cap
I'$, where $I'+I=R$. Since $(I_1)_f = I_f$, ${I'}_f=R_f$ and hence $I'$
contains a monic polynomial $f^r$ for some positive integer $r$. 

Since $\Delta_1 \in E(\wt P_f)$, $\ol \Delta = \Delta_1 \ot R_f/I_f 
\in E(\wt P_f/I_f \wt P_f)$.
 Since $\wt P/I\wt P= \wt P_f/I_f \wt P_f$, we can regard $\ol
\Delta$ as an element of $E(\wt P/I\wt P)$. By (\ref{trans}), $\ol
\Delta$ can be lifted to an automorphism $\Delta$ of $\wt P$.

The map $\Psi : \wt P \surj I\cap I'$ induces a surjection $\psi :
\wt P \surj I/I^2$ and it is easy to see that $\psi = \phi'' \Delta$.
Therefore, to prove the claim, it is enough to show that $\psi$ can
be lifted to a surjection from $\wt P$ to $I$. If $I'=R$, then
obviously $\Psi$ is a required surjective lift of $\psi$. Hence, we
assume that $I'$ is an ideal of height $n$. 

The map $\Psi : \wt P \surj I \cap I'$ 
induces a surjection 
$\psi' : \wt P \surj I'/{I'}^2$. Recall that $\wt P =\wt Q \op
R^2$. Therefore, since  
$I'$ contains $f^r$; a monic polynomial and $\dim (A/\CJ(A))
\leq n-2$, by (\ref{mand1}),
$\psi'$ can be lifted to  a surjection $\Psi' (=(\Gamma,h_1,h_2))
 : \wt P \surj I'$, where $\Gamma \in {\wt Q}^*$, $h_1,h_2 \in R=A[T]$
and $h_1$ is monic. Moreover, if necessary, by (\ref{EE}), 
we can replace $\Gamma$ by $\Gamma+ {h_2}^2\, \Gamma_1$ for suitable
$\Gamma_1 \in {\wt Q}^*$ and assume that $\hh K =n-1$, where
$K=\Gamma(\wt Q) + Rh_1$. Let $\ol R = R/K$ and $\ol A = A/(K\cap
A)$. Then $\ol A \inj \ol R$ is an integral extension and hence 
$\dim (\ol R/\CJ(\ol R)) = \dim (\ol A/\CJ(\ol A)) \leq \dim (A/\CJ(A))
\leq n-2$. 

Let $P_1 = \wt Q\op R$. Then $\wt P=P_1 \op R$ and $K=\Psi'(P_1)$.
Since $K$ contains a monic polynomial $h_1$, $K+I^2=R$. Moreover,   
surjections $\Psi: \wt P \surj I\cap I'$ and $\Psi' : \wt P
\surj I'$ are such that $\Psi\ot R/I'= \Psi' \ot R/I'$. Therefore,
since $\ol R = R/K$ and $\dim (\ol R/\CJ(\ol R)) < n-1$, 
by (\ref{sub}), there exists a surjection
$\Lambda_1 : \wt P \surj I$ with
$\Lambda_1 \ot R/I= \Psi \ot R/I = \psi$. Therefore, $\Lambda = \Lambda_1
\, \Delta^{-1} : \wt P \surj I$ is a lift of $\phi''$.
Thus, the proof of the claim is complete. 
$\hfill \square$
\end{proof}

\begin{remark}
The above result has been proved in (\cite{Das}, Lemma 3.6) in case
$A$ is semi-local and $n=d \geq 3$.
\end{remark}

The following result is due to Bhatwadekar and Raja Sridharan
(\cite{Bh-Raja-1}, Lemma 3.5).

\begin{lemma}\label{3.5}
Let $A$ be a regular domain containing a field $k$, $I\subset A[T]$ an
ideal, $J= A\cap I$ and $B=A_{1+J}$. Let $P$ be a 
projective $A$-module and let $\ol \phi : P[T] \surj I/(I^2T)$ be a
surjective map. Suppose there exists a surjection $\theta : P_{1+J}[T]
\surj I_{1+J}$ such that $\theta$ is a lift of $\ol \phi \ot
B$. Then, there exists a surjection $\Phi : P[T] \surj I$ such that
$\Phi$ is a lift of $\ol \phi$.
\end{lemma}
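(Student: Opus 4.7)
The plan is to iterate Lemma~\ref{3.2}. Since $\theta$ is a surjection of $A_{1+J}[T]$-modules and both $P$ and $I$ are finitely generated over the Noetherian ring $A$, one finds $s_0\in 1+J$ such that $\theta$ descends to a surjection $\theta_0:P_{s_0}[T]\surj I_{s_0}$ over $A_{s_0}[T]$ that lifts $\ol\phi\ot A_{s_0}[T]$. Write $s_0=1+j$ with $j\in J$; since $s_0-j=1$, the elements $s_0$ and $j$ are comaximal in $A$, hence in $A[T]$, and $j\in J\subset I\cap A\subset I$.

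Next, apply Lemma~\ref{3.2} with ring $A[T]$, ideals $I$ and $L=(I^2T)$, projective module $P[T]$, surjection $\phi=\ol\phi$, and comaximal pair $(s,t)=(j,s_0)$. The hypothesis is exactly the existence of the surjective lift $\theta_0$. The conclusion yields a surjection $\Psi_1:P[T]\surj I/(jI^2T)$ lifting $\ol\phi$, together with a map $\Theta_1:P[T]\to I$ satisfying $\Theta_1(P[T])+(jI^2T)=I$ and $\Psi_1=\Theta_1 \bmod (jI^2T)$.

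Examining the explicit construction in the proof of Lemma~\ref{3.2}, one checks that $(\Theta_1)_{s_0}$ differs from $\theta_0$ by a map into $(jI^2T)_{s_0}$ (the difference equals $s_0\cdot a\cdot(\Phi'_{s_0}-\theta_0)$ in the notation of that proof, which has image in $j\cdot(I^2T)_{s_0}$). Hence $\theta_0$ is itself a surjective lift of $\Psi_1\ot A_{s_0}[T]$, and the hypothesis of~\ref{3.2} is satisfied again with $L$ replaced by $(jI^2T)$. By induction on $k$, one obtains surjections $\Psi_k:P[T]\surj I/(j^kI^2T)$ lifting $\ol\phi$ together with lifts $\Theta_k:P[T]\to I$ satisfying $\Theta_k(P[T])+(j^kI^2T)=I$.

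The main obstacle is to extract an actual surjection $\Phi:P[T]\surj I$ from this sequence. Since $A[T]$ is a Noetherian domain and $(j)$ is a proper ideal, Krull's intersection theorem gives $\bigcap_k(j^kI^2T)=0$, so heuristically the approximations converge to the desired surjection. To make this rigorous, one should exploit an Artin--Rees type stability argument for the descending chain of ideals $\Theta_k(P[T])\cap(j^kI^2T)$ inside $I$; this is where the hypothesis that $A$ is a regular domain containing a field enters essentially (ensuring the required Artin--Rees regularity and compatibility of the iterated $\Theta_k$). After this stabilization one concludes that $\Theta_k(P[T])=I$ for sufficiently large $k$, so $\Phi=\Theta_k$ is the desired surjective lift of $\ol\phi$.
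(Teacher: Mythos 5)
The first half of your argument is sound: descending $\theta$ to a surjective lift $\theta_0$ over $A_{s_0}[T]$, applying (\ref{3.2}) with $(s,t)=(j,s_0)$ (note $s_0=1+j$ is already $\equiv 1$ mod $(j)$, so the normalization step in that proof is automatic), and even the iteration is legitimate — the difference you compute is $ja(\Phi'_{s_0}-\theta_0)$, not $s_0a(\cdots)$, but its image does lie in $(jI^2T)_{s_0}$, so $\theta_0$ does lift $\Psi_k\ot A_{s_0}[T]$ at every stage. The genuine gap is the last step. Having surjections $\Psi_k:P[T]\surj I/(j^kI^2T)$ for all $k$, with lifts $\Theta_k$ satisfying $\Theta_k(P[T])+(j^kI^2T)=I$, yields nothing about $I$ itself: the ideals $\Theta_k(P[T])$ vary with $k$ and are not nested or compatible, there is no $j$-adic completeness of $A[T]$ that would let the "approximations converge" to an actual map, and Krull intersection ($\bigcap_k(j^kI^2T)=0$) is irrelevant because nothing is intersected against a fixed submodule; stabilization of the chain $\Theta_k(P[T])\cap(j^kI^2T)$ would in any case not force $\Theta_k(P[T])=I$. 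Your appeal to regularity here is vacuous: Artin--Rees holds over every Noetherian ring, so if this scheme worked it would prove the lemma for arbitrary Noetherian $A$ containing a field — but the hypothesis ``regular'' is exactly what cannot be dropped, as the non-smooth examples of \cite{Bh-Raja-1} recalled in the introduction show that such lifting statements fail in general.

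What the actual proof (\cite{Bh-Raja-1}, Lemma 3.5; see also the remark preceding the proof of Theorem \ref{general}) does is patch rather than iterate, and this is the idea missing from your proposal. Since $j\in J\subset I$, the ideal $I_j=A_j[T]$ is the unit ideal, so over $A_j[T]$ the surjection $\ol\phi\ot A_j[T]$ lifts essentially for free (a lift can be manufactured from $\ol\phi(0)$). One then has surjective lifts over the two charts $A_{s_0}$ and $A_j$; over $A_{js_0}[T]$ both become surjections onto the unit ideal, and — this is where ``regular domain containing a field'' enters, through extendedness of projective modules over $A_{js_0}[T]$ (Lindel/Popescu), exactly as in (\ref{ex}) and (\ref{lem9}) — they are connected by an automorphism of $P_{js_0}[T]$ which is isotopic to the identity (\ref{lem8}). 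Quillen's splitting lemma (\ref{isotopy}) splits this automorphism over the two charts, and the two lifts patch to a global surjection $\Phi:P[T]\surj I$ lifting $\ol\phi$; compare the proof of (\ref{5.4}) in this thesis, which runs on the same pattern. No completion or Artin--Rees argument can substitute for this patching step, so as written your proof does not go through.
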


The following result is very crucial for the proof of our main result
(\ref{general}).

\begin{proposition}\label{monic}
Let $A$ be a regular domain of dimension $d$ containing a field $k$ 
and let $n$ be an integer such that  
 $2n\geq d+3$. Let $I$ be an ideal of $A[T]$ of height $n$. Let 
 $P$ be a projective $A$-module of rank $n$ and let
 $\psi : P[T] \surj I/(I^2T)$ be a surjection. Suppose there exists a 
surjection  $\Psi' : P[T]\ot A(T) \surj IA(T)$
which is a lift of $\psi \ot A(T)$. Then, we can lift $\psi$ to a
surjection $\Psi : P[T] \surj I$.
\end{proposition}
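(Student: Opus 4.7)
The plan is to combine the Moving Lemma, the Subtraction Principle, and Lemma~\ref{021} (which promotes an $A(T)$-lift to an $A[T]$-lift), preceded by a localization that places $J = I \cap A$ inside the Jacobson radical. Setting $B = A_{1+J}$, Lemma~\ref{3.5} reduces the problem to producing a surjection $P_B[T] \surj I_B$ lifting $\psi \ot B$, and the hypothesis $\Psi'$ localizes automatically via $A(T)_{1+J} \cong (A_{1+J})(T)$. Continuing to write $A, I, J, P, \psi, \Psi'$ for the localized data, one has $J \subset \CJ(A)$, and Lemma~\ref{ht} gives $\hh J \geq n-1$, whence $\dim(A/\CJ(A)) \leq d-(n-1) \leq n-2$. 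Two iterations of Serre's theorem~\ref{Serre} produce a rank-$2$ free summand of $P$, so $P = P_1 \op A^2$ and $P(T) = P_1(T) \op A(T)^2$ has unimodular elements; note also $\dim A(T) = \dim A \leq d$.

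I then apply the Moving Lemma~\ref{move} with $f = T$ to obtain a lift $\Delta \in \Hom_{A[T]}(P[T], I)$ of $\psi$ with $\Delta(P[T]) = I \cap I'$, $\hh I' \geq n$, $I' + (J^2 T) = A[T]$, and consequently $I + I' = A[T]$. If $I' = A[T]$ we are done; otherwise $I'$ is proper of height $n$, and the comaximality lets $\Delta$ induce a surjection $\Delta_1 : P[T] \surj I'/(I')^2$.

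The main step is to lift $\Delta_1$ to a surjection $\eta : P[T] \surj I'$ via Lemma~\ref{021}. The structural hypotheses $I' + \CJ(A)A[T] = A[T]$ and $\hh \CJ(A) \geq n-1$ are immediate from $J \subset \CJ(A)$, $\hh J \geq n-1$, and $I' + (J^2T) = A[T]$. The missing input is an $A(T)$-lift $\eta' : P(T) \surj I'A(T)$ of $\Delta_1 \ot A(T)$, produced as follows: if $I'A(T) = A(T)$ then $\Delta_1 \ot A(T) = 0$ and any surjection $P(T) \surj A(T)$ serves (available via the $A(T)^2$-summand); otherwise $IA(T)$ and $I'A(T)$ are comaximal proper ideals of $A(T)$ of height $n$, and the Subtraction Principle~\ref{subtraction}, applied over $A(T)$ to $\Psi' : P(T) \surj IA(T)$ and $\Delta \ot A(T) : P(T) \surj (I \cap I')A(T)$, yields $\eta'$. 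The refinement built into the proof of Lemma~\ref{sub} (on which~\ref{subtraction} rests) ensures $\eta'$ agrees with $\Delta \ot A(T)$ modulo an ideal contained in $(I'A(T))^2$, which is precisely what lifting $\Delta_1 \ot A(T)$ requires.

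Lemma~\ref{021} now delivers $\eta : P[T] \surj I'$ lifting $\Delta_1$. Finally, Lemma~\ref{0sub} applied to $\Delta : P[T] \surj I \cap I'$ and $\eta : P[T] \surj I'$ (whose hypotheses are all in place) produces $\Lambda : P[T] \surj I$ with $(\Delta - \Lambda)(P[T]) \subset (I^2 T)$. Since $\Delta$ lifts $\psi$ modulo $(I^2T)$, so does $\Lambda$, completing the lift over the localized ring, and Lemma~\ref{3.5} propagates it back to the original $A$. The hardest step is the construction of $\eta'$: pinning down the $(I')^2$-level congruence inside the Subtraction Principle, and handling the degenerate case $I'A(T) = A(T)$, require the most care.
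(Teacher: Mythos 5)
Your proposal is correct and follows essentially the same route as the paper: reduce via Lemma \ref{3.5} to the case $J=I\cap A\subset \CJ(A)$, apply the Moving Lemma to get a lift $\Delta$ with image $I\cap I'$ where $I'+(J^2T)=A[T]$, use the Subtraction Principle over $A(T)$ on $\Psi'$ and $\Delta\ot A(T)$ to produce an $A(T)$-lift for $I'$, promote it to a surjection $P[T]\surj I'$ by Lemma \ref{021}, and finish with Lemma \ref{0sub}. The only cosmetic differences (the extra free summand from Serre, the explicit degenerate cases, and attributing the mod-$(I')^2$ agreement to the proof of Lemma \ref{sub} when it is already the stated conclusion of Theorem \ref{subtraction}) do not change the argument.
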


\begin{proof}
In view of (\ref{3.5}), we can assume that $J=I\cap A\subset \CJ(A)$.
Hence $\hh \CJ(A) \geq n-1$, by (\ref{ht}) and $\dim (A/\CJ(A)) 
\leq d- (n-1) \leq n-2$. Therefore, by Serre's result
(\ref{Serre}), we can assume that $P$ has a unimodular element
i.e. $P=P_1 \op A$.

Applying Moving lemma (\ref{move})
 for the surjection $\psi : P[T] \surj I/(I^2T)$, we get
 a lift $\Theta \in \Hom_{A[T]}(P[T],I)$ of $\psi$ such that the ideal
$\Theta(P[T]) = I''$ satisfies the following properties:

$(i)~ I= I'' +(J^2T)$.

$(ii)~ I''=I\cap I'$,  where $I'$ is an ideal of height $n$. 

$(iii)~ I' +( J^2T) = A[T]$.
\medskip

The surjection
$\Theta : P[T] \surj I\cap I'$ induces surjections 
$\phi : P[T]/I'P[T] \surj I'/{I'}^2$ and
$\Theta \ot A(T)
: P(T) \surj (I\cap I')A(T)$ such that 
$\Psi' \ot A(T)/IA(T) = (\Theta \ot A(T)) \ot
A(T)/IA(T).$ 

Since $\dim A(T)=d$ and $I,I'$ are two comaximal ideals of height $n$,
where $2n\geq d+3$, applying Subtraction principle (\ref{subtraction})
to surjections $\Psi'$ and $\Theta \ot A(T)$, we get a surjection
$\Phi' : P(T) \surj I'A(T)$ such that $\Phi' \ot A(T)/I' A(T) =
\phi \ot A(T).$

Since $I'+ \CJ(A)=A[T]$ and $\phi \ot A(T)$ has a
surjective lift, namely, $\Phi' : P(T) \surj I'A(T)$, by (\ref{021}),
there exists a surjection $\Phi : P[T] \surj I'$ which is a lift of $\phi$.
 
Thus, we have  surjections $\Phi : P[T] \surj I'$ and $\Theta : P[T] \surj 
I\cap I'$ such that $\Phi \ot A[T]/I' = \phi = \Theta \ot A[T]/I'$.
Hence, as $I' + (J^2T)=A[T]$ and $J\subset \CJ(A)$, by
(\ref{0sub}), there exists a surjection $\Psi : P[T] \surj I$
such that $(\Psi-\Theta)(P[T])\subset (I^2T)$. Since
$\Theta$ is a lift of $\psi$, we are through.
$\hfill \square$
\end{proof}

\begin{remark}
For $n=d$, the above proposition has been already proved in (\cite{Das},
Theorem 4.7) in the case $A$ is an arbitrary ring
containing a field of characteristic $0$.  
\end{remark}

As an application of (\ref{monic}), we prove the following ``Subtraction
principle'' for polynomial algebra.

\begin{corollary}\label{subtract}
Let $A$ be a regular domain of dimension $d$
containing an infinite field $k$ and let $n$ be an integer such that 
$2n \geq d+3$. Let $P=  P_1 \op A$
be a projective $A$-module of rank $n$ and let
$I, I' \subset A[T]$ be two
comaximal ideals of height $n$. Let
$\Gamma : P[T] \surj I$ and $\Theta : P[T] \surj I\cap I^\prime$ be
surjections such that $\Gamma \ot A[T]/I = \Theta\ot A[T]/I$. Then,
there exists a surjection $\Psi : P[T] \surj I^\prime$ such that $\Psi \ot
A[T]/I^\prime = \Theta\ot A[T]/I^\prime$.
\end{corollary}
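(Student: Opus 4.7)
The plan is to combine Theorem \ref{subtraction} applied over the ring $A(T)$ with a descent from $A(T)$ back to $A[T]$. Note first that Theorem \ref{subtraction} cannot be applied directly over $A[T]$: since $\dim A[T] = d+1$, using it there would demand $2n \geq d+4$, which is stronger than our hypothesis $2n \geq d+3$. However, $\dim A(T) = d$, so the hypothesis permits the Subtraction Principle to be used over $A(T)$.

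First I would dispose of the ``monic'' cases separately. If $I$ or $I'$ contains a monic polynomial, the desired surjection can be built by direct application of Mandal's theorem \ref{mand} (or its refinement to lift the prescribed residue mod $I'$), so assume from now on that neither does; then $IA(T)$ and $I'A(T)$ remain proper comaximal ideals of height $n$ in $A(T)$. I would then apply Theorem \ref{subtraction} to the surjections $\Gamma \otimes A(T):P(T) \surj IA(T)$ and $\Theta \otimes A(T):P(T) \surj IA(T)\cap I'A(T)$ (using the decomposition $P(T) = P_1(T) \oplus A(T)$), obtaining a surjection $\Psi_0: P(T) \surj I'A(T)$ which agrees with $\Theta\otimes A(T)$ modulo $I'A(T)$. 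In particular, $\Psi_0$ is a lift of $\theta \otimes A(T)$, where $\theta: P[T] \surj I'/(I')^2$ is the surjection induced from $\Theta$ via the inclusion $I\cap I' \hookrightarrow I'$ (surjectivity follows from $I' = I'(I+I') = II' + (I')^2 \subseteq (I\cap I') + (I')^2$).

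Next I would descend $\Psi_0$ to a surjection $\Psi:P[T] \surj I'$ lifting $\theta$. This is precisely the task addressed by Lemma \ref{021}, whose hypotheses are $\hh \CJ(A) \geq n-1$ and $I' + \CJ(A)A[T] = A[T]$. To arrange these, I would first localize $A$ at $S = 1+J$ where $J = I'\cap A$. By Lemma \ref{ht} applied to $I'$, we have $\hh J \geq n-1$, so after localization $JA_S$ lies in $\CJ(A_S)$ of height $\geq n-1$, and the comaximality condition becomes automatic. Applying Lemma \ref{021} over $A_S$ yields a lift $\Psi_S: P_S[T] \surj I'A_S[T]$ of $\theta\otimes A_S$, which must then be patched with $\Theta$ (viewed over the complement of $V(J)$, where $I$ is a unit ideal and the original $\Theta$ itself furnishes the lift) to produce the desired global surjection $\Psi: P[T] \surj I'$. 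The patching proceeds along the lines of the proof of Lemma \ref{3.5}, exploiting that both candidates agree modulo $I'$ on the overlap.

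The main obstacle is precisely the descent-and-patching step: obtaining a global surjection $\Psi:P[T] \surj I'$ that simultaneously lifts $\theta$ and coincides with $\Theta$ modulo $I'$. Lemma \ref{021} handles the $A(T) \rightsquigarrow A[T]$ descent only under the Jacobson-radical hypotheses, and the reduction to the case $J \subseteq \CJ(A)$ together with the subsequent reassembly of a global lift compatible with $\Theta$ is the technically delicate portion of the argument.
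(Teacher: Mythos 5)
Your first step --- applying Theorem \ref{subtraction} over $A(T)$, where $\dim A(T)=d$ makes the hypothesis $2n\geq d+3$ sufficient --- is exactly what the paper does, and it correctly produces a surjection $P(T)\surj I'A(T)$ lifting $\theta\ot A(T)$. The gap is in the descent back to $A[T]$. You propose to invoke Lemma \ref{021} after localizing at $1+J$ with $J=I'\cap A$, claiming the hypothesis $I'+\CJ(A)A[T]=A[T]$ then ``becomes automatic''. It does not; in fact the localization works against you. After inverting $1+J$ one has $J\subset \CJ(A_{1+J})$, and since $J\subset I'$, every maximal ideal of $A_{1+J}[T]$ containing $I'$ already contains $J$; there is no reason it should avoid $\CJ(A_{1+J})$ (take $A$ local and $I'=(T,\pi)$ with $\pi$ a regular parameter: then $I'+\CJ(A)A[T]\subseteq (\mm,T)\neq A[T]$). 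In the paper, Lemma \ref{021} is never applied to an ideal whose contraction has been pushed into the radical; it is applied (inside Proposition \ref{monic}) only to a \emph{residual} ideal which the Moving Lemma \ref{move} has made comaximal with $(J^2T)\subseteq \CJ(A)A[T]$, which is what makes its comaximality hypothesis hold.

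The second, related problem is your patching step ``along the lines of Lemma \ref{3.5}''. Both \ref{3.5} and \ref{monic} take as input a surjection onto $I'/({I'}^2T)$, i.e.\ data anchored at $T=0$, not merely onto $I'/{I'}^2$; your construction only ever produces the mod-${I'}^2$ datum $\theta$, and lifting that alone is precisely the situation known to fail without extra hypotheses. Manufacturing the $({I'}^2T)$-level lift of $\theta$ is the entire first half of the paper's proof, and it is where the infinite-field hypothesis enters: one chooses $\gl\in k$ with $K(\gl)=A$ or $\hh K(\gl)=n$ for $K=I\cap I'$, replaces $T$ by $T-\gl$, and then builds a surjection $\psi:P[T]\surj I'/({I'}^2T)$ by a case analysis at $T=0$ --- in the hardest case applying Theorem \ref{subtraction} over $A$ itself to $\Gamma(0)$ and $\Theta(0)$ to get a compatible surjection $P\surj I'(0)$, which is then patched with $\theta$. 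Only after $\psi$ is in hand does the paper run your $A(T)$ step and conclude via Proposition \ref{monic} (which internally performs the Moving Lemma, \ref{021}, and \ref{0sub} steps legitimately). Your outline omits this $T=0$ construction entirely and never uses the infinite-field hypothesis, so as written the descent cannot be completed. (Your preliminary reduction via Mandal's theorem when $I$ or $I'$ contains a monic polynomial has the same defect: Theorem \ref{mand} also needs a surjection onto $I'/({I'}^2T)$, which you have not produced.)
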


\begin{remark}
Since $\dim A[T] =d+1$, if $2n \geq d+4$, then, we can appeal to 
(\ref{subtraction}) for the proof. So, we need to prove the result only in the 
case $2n=d+3$. However, the proof given below in this case works equally 
well for $2n > d+3$ and hence, allows us to give a unified treatment.  
\end{remark}

\begin{proof} 
Let $K=I\cap I^\prime$. Then, since $k$ is infinite, there exists a 
$\gl\in k$ such that $K(\gl)=A$ or $K(\gl)$ has height $n$
(\cite{Bh-Raja-1}, Lemma 3.3). Therefore, 
replacing $T$ by $T-\gl$, if necessary, we assume that $K(0)=A$ or 
$\hh K(0)=n$. 

Note that $\Theta$ induces a surjection $\ol \theta : 
P[T] \surj I'/{I'}^2$. 
We first show that $\ol \theta$ can be lifted to a surjection from $P[T]$
to
$I'/({I'}^2T)$.
 
{\it Case 1.} If $I^\prime(0)=A$, then, since $P=P_1 \op A$, we
can lift  $\ol \theta$ to a surjection
$\phi : P[T] \surj I^\prime/({I^\prime}^2T)$. 

{\it Case 2.} Assume that 
$\hh I'(0)=n$. The map $\Theta$ induces a surjection $\Theta(0) : P
\surj K(0)(=I(0) \cap I'(0))$. If $I(0)=A$, then $K(0)=I'(0)$ and
therefore it is easy to see that $\Theta(0)$ and $\ol \theta$ will
patch up to give a surjection $\psi : P[T] \surj I'/({I'}^2T)$ which
is a lift of $\ol \theta$.
Now, if $\hh I(0)=n$, then, since   $\Gamma \ot A[T]/I = \Theta\ot
A[T]/I$,  
we can apply the Subtraction principle (\ref{subtraction}) to the surjections
$\Gamma(0) : P \surj I(0)$ and $\Theta(0) : P \surj I(0) \cap I'(0)$ to 
conclude that there is a surjection 
$\gf : P \surj I'(0)$ such that $\gf\ot A/I^\prime(0) = 
\Theta(0) \ot A/I^\prime(0)$. Hence, as before, we see that $\ol
\theta$ and $\gf$ will patch up to give a surjection $\psi : P[T]
\surj I'/({I'}^2T)$ which is a lift of $\ol \theta$. 

In view of (\ref{monic}), to show that there exists a surjection $\Psi
: P[T] \surj I'$ such that $\Psi \ot A[T]/I' = \ol \theta = \Theta
\ot A[T]/I'$, it is enough to show that $\psi \ot A(T)$ has a
surjective lift from $P(T)$ to $I'A(T)$.
 
The surjections $\Gamma$ and $\Theta$ induces surjections $\Gamma \ot
A(T) : P(T) \surj IA(T)$ and $\Theta \ot A(T) : P(T) \surj (I\cap
I')A(T)$ respectively with the property 
$$(\Gamma \ot A(T)) \ot
A(T)/IA(T) = (\Theta \ot A(T)) \ot A(T)/IA(T).$$ 
	Therefore, by Subtraction principle 
(\ref{subtraction}), there exists a surjection
$\Psi' : P(T) \surj I^\prime A(T)$ with the property  
$\Psi' \ot A(T)/I^\prime A(T) = 
(\Theta\ot A(T)) \ot A(T)/I^\prime A(T).$ 

Since, $(\Theta \ot A(T))
\ot A(T)/I' A(T) = \psi \ot A(T)$, we are through. 
$\hfill \square$
\end{proof}
\medskip

Recall that ring $A$ is called {\it essentially of finite type over a
field $k$}, if $A$ is a localization of an affine algebra over $k$.

Now, we prove our main result of this thesis.

\begin{theorem}\label{general}
Let $k$ be an infinite perfect field and let 
$A$ be a regular domain of dimension $d$ which is 
essentially of finite type over 
$k$. Let $n$ be an integer such that $2n \geq
d+3$. Let $I\subset A[T]$ be an ideal of height $n$ and let $P$ be a 
projective $A$-module of rank $n$.
Assume that we are given a surjection $$\phi : P[T]\surj
I/(I^2T).$$ Then, there exists a surjection
$$\Phi: P[T]\surj I$$ such that $\Phi$ is a lift of $\phi$.
\end{theorem}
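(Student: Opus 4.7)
The plan is to combine the Moving Lemma with Subtraction-style arguments to reduce producing $\Phi$ to lifting a surjection against an auxiliary ideal $I'\subset A[T]$ with nicer structure. First, by Lemma \ref{3.5}, I may assume $J:=I\cap A\subset \CJ(A)$; then Lemma \ref{ht} gives $\hh \CJ(A)\geq \hh J\geq n-1$, hence $\dim A/\CJ(A)\leq n-2$ via $2n\geq d+3$, and Serre's theorem \ref{Serre} furnishes a splitting $P=P_1\op A$.

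Next I apply the Moving Lemma \ref{move} with $f=T$ to obtain a lift $\Theta:P[T]\surj I\cap I'$ of $\psi$, where $\hh I'\geq n$, $I'+(J^2T)=A[T]$, and $I=(I\cap I')+(J^2T)$. Two crucial consequences: $I$ and $I'$ are comaximal in $A[T]$ (since $J^2T\subset I$), and $I'(0)=A$. If I can produce a surjection $\Gamma:P[T]\surj I'$ compatible with $\Theta$ modulo $I'$, that is $\Gamma\ot A[T]/I'=\Theta\ot A[T]/I'$, then Lemma \ref{0sub}---whose hypotheses $J\subset \CJ(A)$ and $I'+(J^2T)=A[T]$ hold---yields a surjection $\Lambda:P[T]\surj I$ with $\Lambda-\Theta\in (I^2T)$. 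Since $\Theta$ lifts $\psi$, so does $\Lambda$, giving the desired $\Phi$.

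The heart of the proof, and the main obstacle, is the construction of $\Gamma$. The key structural features of $I'$ are that $I'(0)=A$ and that $I'$ contains the element $e:=1-Tg$ with $g\in J^2 A[T]$ (coming from $I'+(J^2T)=A[T]$). Since $I'(0)=A$, the induced surjection $\bar\theta:P[T]\surj I'/I'^2$ coming from $\Theta$ lifts to $\theta':P[T]\surj I'/(I'^2T)$ via the projection $P=P_1\op A\surj A=I'(0)$ (the compatibility condition at $T=0$ is vacuous as $I'(0)/I'(0)^2=0$). To produce $\Gamma:P[T]\surj I'$ lifting $\theta'$, I would invoke Proposition \ref{monic} applied to the data $(A,I',n,P,\theta')$, which reduces the task to producing a surjective lift of $\theta'\ot A(T)$ over $A(T)$. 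Here the hypotheses on $k$ (infinite perfect) and $A$ (essentially of finite type over $k$) enter essentially: combined with the near-monic element $e\in I'$, they permit a construction of the $A(T)$-lift via Mandal-type monic-polynomial arguments (Lemma \ref{mand1}, together with Mandal \ref{mand}), applied after suitable manipulation to install an actual monic polynomial in $I'$. This step avoids circular reliance on the main theorem because the application of \ref{monic} to $I'$ is not a full application of the main theorem---only of its $A(T)$-to-$A[T]$ descent content---and the $A(T)$-lift for $I'$ is built from the specific extra structure just noted rather than by recursion. With $\Gamma$ in hand, Lemma \ref{0sub} delivers $\Lambda$, completing the proof.
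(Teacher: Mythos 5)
Your reduction steps (passing to $J=I\cap A\subset \CJ(A)$ via \ref{3.5}, splitting $P=P_1\op A$ by Serre, applying the Moving Lemma to get $\Theta : P[T]\surj I\cap I'$ with $I'+(J^2T)=A[T]$, and then returning from $I'$ to $I$ via \ref{0sub}) faithfully reproduce the skeleton that the paper itself uses, but only for its \emph{local} case. The fatal gap is the construction of $\Gamma : P[T]\surj I'$. Producing a surjective lift of $\theta' : P[T]\surj I'/({I'}^2T)$ for a height-$n$ ideal $I'$ with $I'(0)=A$ is not a smaller problem: it is exactly an instance of the theorem being proved (same $A$, $n$, $P$), and indeed the Moving Lemma shows the general case is equivalent to this special case. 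Your claim that it follows from ``Mandal-type monic-polynomial arguments'' does not hold up. The element $1-Tg$ with $g\in J^2A[T]$ is not monic and cannot be ``manipulated'' into a monic element of $I'$; if $I'$ did contain a monic polynomial, Mandal's theorem \ref{mand} would finish immediately and \ref{monic} would be superfluous. Likewise, the input that \ref{monic} demands --- a surjection $P(T)\surj I'A(T)$ lifting $\theta'\ot A(T)$ --- is not automatic: since $T$ is a unit in $A(T)$ this is the problem of lifting a surjection $P(T)\surj I'A(T)/{I'}^2A(T)$ to $I'A(T)$ over a ring of dimension $d$ with $\mbox{rank}\, P=n$, $2n\geq d+3$, which is an Euler-class-type obstruction question and can fail in general; neither \ref{mand} nor \ref{mand1} applies because $I'A(T)$ need not be the unit ideal and no monic polynomial is available.

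This is also where your stated use of the hypotheses breaks down: ``$k$ infinite perfect'' and ``$A$ essentially of finite type over $k$'' never enter through any result you cite (Mandal's theorems need no regularity or field hypotheses), so the proposal would, if correct, prove the statement in a generality where it is not known. In the paper these hypotheses enter through the Mandal--Varma theorem \ref{M-V}, which settles precisely your missing step ($I'(0)=A$) but only over a regular $k$-spot; the global theorem is then obtained by a Quillen-patching argument on the set $S=\{s\in A : \phi\ot A_s[T]$ lifts$\}$, where the required $A(T)$- and $A_t(T)$-lifts are manufactured from the data $s,t\in S$, $s+t=1$ via \ref{3.2}, \ref{subtract}, \ref{square-lift}, \ref{m-phil}, \ref{monic} and \ref{0sub} (Steps 1--4 of the paper's proof). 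To repair your argument you would need to supply this local-to-global mechanism (or an equivalent source for the $A(T)$-lift of $\theta'$); as written, the central difficulty has been assumed rather than proved.
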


\begin{remark}
We first say a few words about the method of the proof. The
essential ideas are contained in the case where $P=A^n$ is free. To
simplify the notation, we denote the ring $A[T]$ by $R$.

Following an idea of Quillen (see \cite{Quillen}), we show that the
collection of elements $s\in A$ such that the surjection
$\phi_s=\phi\ot R_s$ can be lifted to a surjection $\Psi : {R_s}^n
\surj I_s$ is an ideal of $A$. This ideal, in view of the result of
Mandal and Varma (the local case), is not contained in any maximal ideal
of $A$ and hence contains $1$.  Therefore, we are through.

Denote this collection by $\CS$. It is easy to see that if $s\in \CS$
and $a\in A$, then $as\in \CS$. Hence $\CS$ will be an ideal
if we show that for $s,t\in \CS$, $s+t \in \CS$. 
As in \cite{Quillen}, by replacing $A$ by $A_{s+t}$, we may 
assume that $s+t=1$. Since $A$ is regular, 
if some power of $s$ is in $I$,
then, by using Quillen's splitting lemma for an automorphism of
${R_{st}}^n$ which is isotopic to identity, one can easily show that $1=s+t
\in \CS$ (for example see \cite{Bh-Raja-1}, Lemma 3.5).
 The crux of the proof is to reduce the problem to this
case. We indicate in brief how this reduction is achieved. First we
digress a bit.

The surjection $\phi : R^n \surj I/(I^2T)$  can be lifted to
$\Phi' : R^n \surj I\cap I'$, where $I'$ is an ideal of $R$ of
height $n$ comaximal with $I$ (we say $I'$ is 
residual to $I$ with respect to $\phi$). A ``Subtraction principle''
(see Theorem \ref{subtraction} and Corollary \ref{subtract}) says
that if the surjection (induced by $\Phi'$) $\phi_1 : R^n \surj
I'/({I'}^2 T)$ has a surjective lift from $R^n$ to $I'$, then
$\phi$ can be lifted to a surjection $\Phi: R^n \surj I$.

Now, using the fact that $t=1-s\in \CS$, we first show the existence of
an ideal $I_1$ which is residual to $I$ with respect to $\phi$ and
satisfying the additional property that $I_1$ 
is comaximal with $Rs$. Then, using the fact that 
$s\in \CS$, we show that there
exists an ideal $I_2$ which contains a power of $s$ and is residual
to $I_1$. Thus, the desired reduction is achieved.
  
Since the problem is solved for $I_2$, applying 
``Subtraction principle'', the problem is solved for $I_1$. Applying
Subtraction principle once again, the problem is solved for $I$. This
completes the proof of Theorem \ref{general}.
\end{remark}

\paragraph{Proof of Theorem \ref{general}.}
If $I$ has  height $d+1$, then $I$ contains a monic
polynomial in $T$. Hence, by Mandal's theorem 
(\ref{mand}), we are through. Therefore,
we always assume that $n\leq d$ and hence, the inequality $2n \geq
d+3$ would imply that $d\geq 3$.

We first assume that $A$ is local. In this case, if $n\geq 4$ and
$I(0)=A$ or $I(0)$ is a complete intersection ideal of height $n$,
then, by Mandal-Varma theorem (\ref{M-V}), we are through. It is easy to see
that in the case $I(0)=A$, (\ref{M-V}) is valid even if $\hh I =\dim
A=3$. To complete the proof in the case $A$ is local we proceed as
follows. 

Let $J=I\cap A$. By Moving lemma (\ref{move}),
the surjection $\phi : P[T]\surj I/(I^2T)$  has a
lift $\Phi' \in \Hom_{A[T]}(P[T],I)$ such that the ideal $\Phi'(P[T])
=I''$ satisfies the following properties:

$(i)~ I''+(J^2T)=I$.

$(ii)~ I'' = I\cap I'$, where $I'$ is an ideal of height $\geq n$.

$(iii)~ I'+(J^2T)=A[T]$.
\medskip
 
Since $I'$ is locally generated by $n$ elements, 
if $\hh I'>n$, then $I'=A[T]$ and we are through. So assume that $\hh
I' = n$. The surjection $\Phi' : P[T] \surj
I'' (=I\cap I')$ induces a surjection $\psi' : P[T] \surj
I'/{I'}^2$.
Since $I'+(J^2T)=A[T]$, $I'(0)=A$. Hence, as $P$ is free,
$\psi'$ can be lifted to a surjection $\psi : P[T]
\surj I'/({I'}^2 T)$. Now, as $I'(0)=A$, by (\ref{M-V}), the
surjection $\psi$
can be lifted to a surjection $\Psi : P[T] \surj I'$. 
Thus, we have surjections $\Phi' : P[T] \surj I\cap I'$ and $\Psi : P[T]
\surj I'$ such that $\Phi' \ot A[T]/I' = \Psi\ot A[T]/I'$. Therefore,
since $I'+(J^2T)=A[T]$ and $A$ is local,
 by (\ref{0sub}), there exists a surjection
$\Phi : P[T] \surj I$ such that $(\Phi - \Phi')(P[T]) \subset (I^2T)$.
Since $\Phi'$ is a lift of $\phi$, we are through.

Now, we prove the theorem in the general case. 
Let $$ S= \{ s\in A \;|\; \exists \;
\Lambda : P_s[T] \surj I_s\;;\; \Lambda\; {\mbox {is a lift of}}\;
\phi \ot A_s[T] \,\}. $$  Our aim is to prove that $1\in S$. 
Note that if $t\in S$ and $a\in A$, then $at\in S$.
Moreover, since the theorem is proved in the local
case, it is easy to see that 
 for every maximal ideal $\mm$ of $A$, there exists $s\in
A-\mm$ such that $P_s$ is free and $s\in S$. Hence, 
 we can find $s_1,\ldots,s_r\in S$
such that $P_{s_i}$ is free and $s_1+\cdots+ s_r =1$. Therefore, 
by inducting on $r$,
it is enough to show that if $s,t\in S$ and $P_s$
is free, then $s+t\in  S$. Since, in the ring  $B=A_{s+t}$, $x+y=1$,
where $x=s/s+t$ and
$y=t/s+t$, replacing $A$ by $B$ if necessary, we are reduced to prove
that if $s,1-s=t \in S$ and $P_s$ is free, then $1\in S$.
  
The rest of the argument is devoted to the proof of this assertion.
 The proof is given in steps.

\paragraph{Step 1:}
Let $J=I\cap A$.  In view of (\ref{3.5}), replacing $A$ by $A_{1+J}$
if necessary, we assume that $J\subset \CJ(A)$. If $s$ or $t$ is a
unit in $A$, then obviously $1\in S$. So, without loss of generality,
we can assume that $s$ and $t$ are not invertible elements of
$A$. Therefore, as $J\subset \CJ(A)$, $s\notin \sqrt J$ and $t\notin
\sqrt J$.

Since $\hh I =n$, $\hh J \geq n-1$ by (\ref{ht}).
 Therefore $\dim (A/\CJ(A)) \leq d-(n-1)\leq n-2$.
 Hence, since rank $P =n$, by Serre's result
(\ref{Serre}), $P\iso Q \op A^2$.

Let $\Gamma_2 :
P_t[T] \surj I_t$ be a surjection which is a lift of $\phi\ot
A_t[T]$. Since $As+ At=A$, applying (\ref{3.2}) (with $L=(I^2T)$ and
$B=A[T]$), we get a surjection $\gamma'
: P[T] \surj I/(I^2Ts)$ which is a lift of $\phi$. Applying
(\ref{move}) to the surjection $\gamma'$,
 we get a lift  $\Gamma'\in \Hom_{A[T]} (P[T],I)$
of $\gamma'$ such that the ideal $\Gamma'(P[T])=\wt I$ satisfies the
following properties:

$(i)~ \wt I + (J^2Ts) = I$.

$(ii)~ \wt I = I\cap I_1$, where $\hh I_1 \geq n$.

$(iii)~ I_1+(J^2Ts)=A[T]$.
\medskip

As before, if $\hh I_1 > n$, then $I_1=A[T]$ and we are through. So
we assume that $\hh I_1 = n$.
The surjection $\Gamma' : P[T] \surj I\cap I_1$ induces a surjection
$\theta : P[T] \surj I_1/{I_1}^2$. 
Recall that $J\subset \CJ(A)$ and $P \iso Q \op A^2$.
Moreover, $I_1+ (J^2T) =A[T]$.
Therefore, if $\theta$ can be lifted to a
surjection $\Theta : P[T] \surj I_1$, then, by  (\ref{0sub}), $\phi$
can be lifted to a surjection $\Phi : P[T] \surj I$. 

In subsequent steps, we will show that $\theta$ has a surjective lift
$\Theta : P[T] \surj I_1$. 

\paragraph{Step 2:}
Let $\Gamma_1 : P_s[T]\surj I_s$ be a surjection which is a lift of
$\phi\ot A_s[T]$. Since the map $\Gamma' : P[T] \surj I \cap I_1$ 
is a lift of $\phi$, $\Gamma' \ot A_s[T]/I_s=\Gamma_1\ot A_s[T]/I_s$.
Therefore, applying Subtraction principle (\ref{subtract}),
 we get a surjection
$\Theta_1 : P_s[T] \surj (I_1)_s$ which is a lift of $\theta \ot
A_s[T]$.

Since $I_1+(J^2 Ts) = A[T]$, there exists an element $g\in A[T]$ such
that $1-sg \in I_1$ and the canonical map $A[T]/I_1 \ra
A_s[T]/(I_1)_s$ is an isomorphism. Therefore, as $P[T] = Q[T] \op A^2[T]
$ and $P_s[T]$ is a free $A_s[T]$-module, $Q[T]/I_1 Q[T]$ is a
stably free $A[T]/I_1$-module of rank $n-2$.
Since $J\subset \CJ(A)$, $I_1+JA[T] =A[T]$ and $\hh I_1=n$, by
(\ref{004}), any maximal ideal of $A[T]$ containing $I_1$ has height
$\leq d$. Hence 
 $\dim (A[T]/I_1) \leq d-n \leq n-3$. Hence, by Bass' result 
(\ref{1bass}), $Q[T]/I_1Q[T]$ is a free $A[T]/I_1$-module.
 
Let $N$ be a positive  even integer such that $(s^N \Theta_1)(P[T])\subset
I_1$ and let $\wt \Theta=s^N \Theta_1 \in \Hom_{A[T]} (P[T],I_1)$. Then, as
$1-sg\in I_1$, $\wt \Theta$ induces 
a surjection $\wt \theta : P[T] \surj I_1/{I_1}^2$. 
Since $N$ is even,  
if $\wt \theta$ can be lifted to a surjection $\Theta_2 : P[T] \surj
I_1$, then, by (\ref{square-lift}), there would exist a surjection
$\Theta : P[T] \surj I_1$ such that $\Theta \ot A[T]/I_1 = g^N
\Theta_2 \ot A[T]/I_1$. In that case, 
 since $1-s^N g^N\in I_1$, $A[T]/I_1 = A_s[T]/(I_1)_s$,
 $\Theta_2 \ot A[T]/I_1  = s^N \Theta_1 \ot A[T]/I_1$ and $\Theta_1$ is
a lift of $\theta$,  $\Theta$ would be a lift of $\theta$.

Thus, it is enough to show that the surjection $\wt \theta : P[T]
 \surj I_1/{I_1}^2$ can be lifted to a surjection $\Theta_2 : P[T]
  \surj I_1$.

\paragraph{Step 3:}
Recall that $\Theta_1 :P_s[T] \surj (I_1)_s$ is a surjection and  
$\wt \Theta = s^N \Theta_1 : P[T] \ra I_1$ is a lift of $\wt \theta$.
Therefore, the induced map $\wt \Theta_s : P_s[T] \surj (I_1)_s$ is
also a surjection.   
Hence, by (\ref{m-phil}), there exists
$\Delta \in E(P_s[T])$ such that if $\Delta^*(\wt \Theta)=\Lambda$, then
$(1)\;\Lambda \in P[T]^*$ and $(2)\; \gL_1(P[T])=K\subset I_1$
 is an ideal of $A[T]$ of height $n$, where $\Delta^*$ is an element
of $E(P[T]^*)$ induced by $\Delta$. Since $K_s = (I_1)_s$ and 
$A[T] \cap (I_1)_s = I_1$ (as the ideals $I_1$ and $sA[T]$
are comaximal),  we get
$K = I_1 \cap I_2$ with $(I_2)_s = A_s[T]$. Therefore, $s^r \in I_2$
and  hence $I_1 +I_2 = A[T]$, since $I_1+(s) = A[T]$.
Since $K$ is an ideal of $A[T]$ of height $n$ which is a surjective
image of $P[T]$, either $I_2=A[T]$ or $I_2$ is an ideal of height
$n$. 

Since $ A[T]/I_1 = A_s[T]/(I_1)_s$,
$P[T]/I_1P[T] = P_s[T]/I_1P_s[T]$. Hence, the element $\Delta$ of 
$E(P_s[T])$ gives rise to an
element $\ol \Delta$ of $E(P[T]/I_1P[T])$. By (\ref{trans}), there
exists an automorphism $\Delta_0$ of $P[T]$ which is a lift of $\ol
\Delta$. Let $\wt \theta\,
\ol \Delta = \lambda_1 : P[T]/I_1P[T] \surj I_1/{I_1}^2$ be a surjection.
Then, it is obvious  that if $\lambda_1$ can be lifted to a
surjection $\Lambda_1 : P[T] \surj I_1$, then $\wt \theta$ also has a
surjective lift $\Theta_2 : P[T] \surj I_1$.

\paragraph{Step 4:}
Note that $\Lambda : P[T] \surj I_1 \cap I_2$ is a surjection such
that $\Lambda \ot A[T]/I_1=\lambda_1$. Therefore, if 
$I_2=A[T]$, then we are through. Now, we
assume that $I_2$ is an ideal of $A[T]$ of height $n$.

Since $I_1(0)=A$, $\Lambda$ gives rise to a surjection $\lambda_2 :
P[T] \surj I_2/({I_2}^2T)$. If $\lambda_2$ has a surjective lift
from $P[T]$ to $I_2$, then, by Subtraction principle (\ref{subtract}),
$\lambda_1$ would
have a surjective lift $\Lambda_1 : P[T] \surj I_1$. Therefore, it is
enough to show that $\lambda_2$ can be lifted to a surjection
$\Lambda_2 : P[T] \surj I_2$.
  
Since $s^r\in I_2 \cap A$ and $t=1-s$, by
(\ref{3.5}), it is enough to show that $\lambda_2 \ot A_t[T] : P_t[T]
\surj (I_2)_t/({I_2}^2T)_t$ has a surjective lift. 
In view of (\ref{monic}), it is sufficient to prove that the surjection 
$\lambda_2 \ot A_t(T) : P_t(T) \surj I_2 A_t(T)/{I_2}^2 A_t(T)$ 
can be lifted to a surjection $\wt \Lambda_2 : P_t(T) \surj I_2 A_t(T)$.

Recall that we have a  surjection 
$\Gamma_2: P_t[T]\surj I_t$ which is a lift of $\phi \ot
A_t[T]$. Moreover, we also have surjections 
$\Gamma' : P[T] \surj I\cap I_1$, $\Lambda
: P[T] \surj I_1\cap I_2$, where $I_1$ and $I_2$ are ideals of $A[T]$
of height $n$ and an automorphism $\Delta_0$ of $P[T]$ 
 such that 

(1) $\Gamma' \ot A[T]/I = \phi$.

(2) $I_1 + (J^2Ts) = A[T]$, where $J=I\cap A \subset \CJ(A)$.

(3) $I_1+I_2=A[T]$.
 
(4) $s^N\, \Gamma' \ot A[T]/I_1 = \Lambda\, {\Delta_0}^{-1} \ot A[T]/I_1$,
where $N$ is an even integer. 
\medskip

Let $R_1 = A_t(T)$. Then, by Subtraction principle
(\ref{subtraction}),  there exists a surjection
$\Phi_1 : P[T]\ot R_1 \surj I_1R_1$ such that $\Phi_1 \ot R_1/I_1R_1 =
\Gamma' \ot R_1/I_1R_1$. Since $P[T]= Q[T] \op A[T]^2$ and
$Q[T]/I_1Q[T]$ is free,
 by (\ref{square-lift}), there exists a surjection $\Phi_2 : P[T]\ot
R_1 \surj I_1R_1$ such that $\Phi_2 \ot R_1/I_1R_1 = s^N \,\Gamma' \ot
R_1/I_1R_1 = \Lambda \,
{\Delta_0}^{-1} \ot R_1/I_1R_1$. Since $\Delta_0$ is an automorphism of
$P[T]$, there exists a surjection $\Phi_3 : P[T]\ot R_1 \surj I_1R_1$
such that $\Phi_3 \ot R_1/I_1R_1 = \Lambda \ot
R_1/I_1R_1$. Therefore, by (\ref{subtraction}), there exists a surjection 
$\wt \Lambda_2 : P[T] \ot R_1 \surj I_2 R_1$ such that $\wt \Lambda_2 \ot
R_1/I_2R_1 = \lambda_2 \ot R_1$. 

Thus, the proof of the theorem is complete. 
$\hfill \square$


\chapter{Some Auxiliary results}

In this section we prove two results. Though these results do not
have any direct bearing on the main theorem (proved in the last section),
we think that they are interesting off shoots of (\ref{mand1}) and
(\ref{subtraction}) and are of independent interest. 

First result gives a partial answer to the following question of
Roitman: 

\begin{question}
Let $A$ be a ring and let $P$ be a projective $A[T]$-module such
that $P_{f(T)}$ has a unimodular element for some monic polynomial
$f(T)$. Then, does $P$ have a unimodular element? 
\end{question}

Roitman in (\cite{Roitman}, Lemma 10) answered this question affirmatively in
the case $A$ is local.  If rank $P > \dim A$, then, by Plumstead's
result (\cite{P}, Theorem
2), $P$ has a unimodular element. In (\cite{Bh-Raja-4}, Theorem 3.4) an
affirmative answer is given to the above question in the case rank
$P= \dim A$ under the additional assumption that $A$ contains an
infinite field. In this section we settle the case (affirmatively):
$P$ is extended from $A$,
rank $P \geq (\dim A +3)/2$ and $A$ contains an infinite field.   

For the proof we need the following two lemmas which are proved in
(\cite{Bh-Raja-4}, Lemma 3.1 and Lemma 3.2 respectively).

\begin{lemma}\label{006}
Let $A$ be a ring containing an infinite field $k$ and let $\wt P$ be
a projective $A[T]$-module of rank $n$. Suppose $\wt P_{f(T)}$ has a
unimodular element for some monic polynomial $f(T)\in A[T]$. Then,
there exists a surjection from $\wt P$ to $I$, where $I\subset A[T]$
is an ideal of height $\geq n$ containing a monic polynomial.
\end{lemma}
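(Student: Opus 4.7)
The plan is to construct the required surjection in two stages. First, I would use the unimodular element of $\wt P_{f(T)}$ to produce a map $\gf \in \wt P^\ast$ whose image already contains a power of $f$, hence a monic polynomial. Second, I would perturb $\gf$ to boost the height of its image to at least $n$ while preserving the monic in the image. The hypothesis that $k \subseteq A$ is infinite enters in the genericity argument needed to fulfill both requirements simultaneously.

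For the first stage, a unimodular element of $\wt P_f$ corresponds to a split surjection $\psi \in (\wt P_f)^\ast$, i.e.\ $\psi : \wt P_f \surj A[T]_f$. By clearing denominators, there is an integer $M \geq 0$ and a map $\gf \in \wt P^\ast$ with $\gf|_{\wt P_f} = f^M \psi$. Consequently
$$\gf(\wt P)\cdot A[T]_f \;=\; f^M\,\psi(\wt P_f)\cdot A[T]_f \;=\; f^M A[T]_f \;=\; A[T]_f,$$
so the ideal $J := \gf(\wt P)$ satisfies $J_f = A[T]_f$, whence $J \ni f^L$ for some $L \geq 0$. In particular $J$ contains a monic polynomial.

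For the second stage, I would apply Theorem \ref{EE} (Eisenbud-Evans) to obtain $\gd \in \wt P^\ast$ such that $I := (\gf+\gd)(\wt P)$ has $\hh I \geq n$. The additional requirement beyond the raw statement of Theorem \ref{EE} is that $(\gf+\gd)_f$ remain surjective onto $A[T]_f$ (equivalently, that $I$ still contain a power of $f$). The Eisenbud-Evans construction produces $\gd$ as a generic combination $\sum c_i \gd_i$ with $\gd_i \in \wt P^\ast$ fixed and $c_i$ ranging over $A$; the conclusion $\hh I \geq n$ holds on a Zariski-dense subset of parameter values. Surjectivity of $(\gf+\gd)_f$ is likewise an open condition in the $c_i$, non-empty because $\gf_f$ is itself surjective (take $\gd = 0$). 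Since $k$ is infinite, the intersection of these two non-empty open conditions contains a $k$-point, giving a simultaneous $\gd$. Setting $\gf' := \gf+\gd$, the map $\gf' : \wt P \surj I$ is the desired surjection onto an ideal $I$ of height $\geq n$ containing a monic polynomial.

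The main obstacle is precisely this simultaneous control: the height-boost from Eisenbud-Evans might, a priori, destroy the surjectivity at $f$ (and hence the monic in the image). Resolving this requires exploiting the extra degree of freedom coming from the infinite field $k \subseteq A$, via the standard open-and-dense-intersects-nonempty-open argument. Without the infinite field hypothesis one would have to constrain $\gd$ to lie in $f\wt P^\ast$ and argue separately, reducing modulo $f$ and working in the finite $A$-algebra $A[T]/(f)$, which of dimension $d$ still admits Eisenbud-Evans but forces more delicate bookkeeping on heights across the specialization $f = 0$.
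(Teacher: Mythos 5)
Your first stage is fine: clearing denominators from a splitting associated to a unimodular element of $\wt P_{f}$ does give $\gf\in\wt P^{*}$ whose image $J=\gf(\wt P)$ contains a power of $f$, hence a monic polynomial. The gap is in the second stage, which is exactly where the entire difficulty of the lemma sits. Theorem \ref{EE} is a bare existence statement: for a \emph{chosen} $a$ it produces a single $\gb$, constructed by prime avoidance with coefficients in $A[T]$, and it controls only $\hh I_{a}$ (one gets $\hh I\geq n$ only when $(\gf(\wt P),a)$ already has height $\geq n$, or by taking $a=1$, in which case all contact with $f$ is lost). It does not assert that the height condition holds for a ``Zariski-dense set of parameter values'' of some family $\gd=\sum c_{i}\gd_{i}$, and you never specify a finite-dimensional parameter space over $k$ in which your two conditions are open, dense, or nonempty; so the ``dense meets nonempty open'' step simply asserts the simultaneity that has to be proved. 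Moreover the tension is genuine, not bookkeeping: any perturbation that visibly preserves a monic in the image, say $\gd=g\gb$ with $g$ a monic polynomial lying in $J$ (for instance $g=f^{L}$), satisfies $(\gf+g\gb)(\wt P)\subseteq J$, so it can never raise the height beyond $\hh J$; and your fallback $\gd\in f\wt P^{*}$ traps the image inside $J+(f)$, whose height can be $1$ (e.g.\ $J=(f^{m})$). Conversely, an unconstrained perturbation, which is what raising the height to $n$ in general requires, gives no control on surjectivity of $(\gf+\gd)_{f}$ --- nor even on the weaker property actually needed, that the new image contain \emph{some} monic polynomial. Your proposal supplies no mechanism reconciling these two pulls, and the only place the hypothesis ``$k$ infinite'' enters is the unsupported density claim.

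For the comparison you asked about: the thesis does not prove this lemma at all; it quotes it from (\cite{Bh-Raja-4}, Lemma 3.1), where the infinite-field hypothesis is used through an actual argument rather than through an alleged genericity built into Eisenbud--Evans. So your proposal is not a variant of the source's proof with different bookkeeping; as written it is missing the key idea that makes it possible to raise the height of the image to $n$ while retaining a monic polynomial in it.
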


\begin{lemma}\label{isotopic}
Let $R$ be a ring and let $Q$ be a projective $R$-module. Let $(\ga(T),f(T))
: Q[T]\op R[T] \surj R[T]$ be a surjective map with $f(T)$ monic. Let
$pr_2 : Q[T]\op R[T] \surj R[T]$ be the projection onto the second
factor. Then, there exists an automorphism $\sigma(T)$ of
$Q[T]\op R[T]$ which is isotopic to  identity 
and $pr_2\,\sigma(T) = (\ga(T),f(T))$. 
\end{lemma}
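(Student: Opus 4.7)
The plan is to construct $\sigma(T)$ as a product of explicit transvections of $Q[T]\op R[T]$, so that $\sigma(T)\in E(Q[T]\op R[T])$ and hence, by Remark \ref{lem8}, is automatically isotopic to the identity.

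First, I would use unimodularity of $(\alpha(T),f(T))$ to fix $p(T)\in Q[T]$ and $h(T)\in R[T]$ with $\alpha(p)+fh=1$. Then I would realize $(\alpha,f)$ as $pr_2\sigma(T)$ (equivalently, the second row of $\sigma(T)$) by composing simple transvections. The two natural transvections
$$\tau_1=\begin{pmatrix}I & 0\\ \alpha & 1\end{pmatrix},\qquad \tau_2=\begin{pmatrix}I & -p\\ 0 & 1\end{pmatrix}$$
are genuinely transvections because $(0,1)\in Q[T]\op R[T]$ is unimodular and serves as the ``$p$'' in the definition of transvection in each case. A direct computation shows that the second row of $\tau_1\tau_2$ is $(\alpha,1-\alpha(p))=(\alpha,fh)$. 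A further transvection of the form $\tau_3=\begin{pmatrix}I & q_0\\ 0 & 1\end{pmatrix}$ changes the second coordinate of any second row $(\beta,g)$ by $\beta(q_0)$; applying it to $\tau_1\tau_2$ would yield the desired second row $(\alpha,f)$ provided one can find $q_0\in Q[T]$ satisfying $\alpha(q_0)=f-fh=f(1-h)$.

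The main obstacle is that $f(1-h)$ need not lie in $\alpha(Q[T])$ in general. This is precisely where the hypothesis that $f$ is monic is essential: since $R[T]/(f)$ is a finite free $R$-module and $\alpha(p)\equiv 1\pmod f$, the induced map $\alpha\bmod f : Q[T]/fQ[T]\to R[T]/(f)$ is surjective. I would exploit this by first inserting a preliminary transvection of the form $\begin{pmatrix}I & 0\\ \lambda & 1\end{pmatrix}$, with $\lambda\in Q[T]^*$ chosen so that the modified first coordinate contains $f(1-h)$ in its image, and cancelling the modification at the end by an inverse transvection of the same type. Because every factor in the resulting composite is a transvection, $\sigma(T)$ lies in $E(Q[T]\op R[T])$; by Remark \ref{lem8}, $\sigma(T)$ is isotopic to the identity (the isotopy $\Phi(W)$ being obtained by taking the product of the one-parameter $W$-deformations $1+W\phi_p$ of the individual transvections), and by construction $pr_2\sigma(T)=(\alpha(T),f(T))$.
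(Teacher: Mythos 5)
Your computation up to the row $(\alpha,\,1-\alpha(p))=(\alpha,\,fh)$ is correct, but the step where the monic hypothesis is supposed to enter is a genuine gap, not a routine verification. Surjectivity of $\alpha \bmod f$ only restates unimodularity (it needs no monicness) and only gives $f(1-h)\in \alpha(Q[T])+(f)$, i.e.\ membership in an ideal generated by the whole image together with $f$; what your plan requires is the far stronger statement that $f(1-h)$ lies in the image of a \emph{single} functional $\alpha+fh\lambda$ for some $\lambda\in Q[T]^{*}$, i.e.\ in an order ideal. No argument is given for this, and when $Q$ has rank one it is a divisibility condition that can fail outright: for $R=\BZ$, $\alpha=1+2T$, $f=T^{2}$, $p=1-2T$, $h=4$, no element of the coset $1+2T+(4T^{2})$ divides $f(1-h)=-3T^{2}$. (Also, the ``cancelling'' transvection at the end does not restore the first coordinate to $\alpha$ once the second coordinate has been changed, since you can only modify the first slot by multiples of the \emph{current} second slot; this part is repairable, but the existence of $\lambda$ is not.)

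More decisively, the strategy proves something stronger than the lemma, and that stronger statement is false, so no rearrangement of these moves can succeed. All your factors are transvections of the block shape $\left(\begin{smallmatrix} I & * \\ 0 & 1\end{smallmatrix}\right)$, $\left(\begin{smallmatrix} I & 0 \\ * & 1\end{smallmatrix}\right)$; for $Q=R$ free of rank one these are exactly elementary matrices, so your $\sigma$ would lie in $E_{2}(R[T])$ and $pr_{2}\sigma=(\alpha,f)$ would say that every $(\alpha(T),f(T))\in \Um_{2}(R[T])$ with $f$ monic lies in the orbit $(0,1)E_{2}(R[T])$. Take $R=k[x]$, $\alpha=1+xT$, $f=T^{2}$. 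If $(1+xT,T^{2})=(0,1)\varepsilon$ with $\varepsilon\in E_{2}(k[x][T])$, then the completion $N=\left(\begin{smallmatrix} x^{2} & xT-1 \\ 1+xT & T^{2}\end{smallmatrix}\right)\in SL_{2}$ satisfies $N\varepsilon^{-1}=\left(\begin{smallmatrix} 1 & * \\ 0 & 1\end{smallmatrix}\right)$, so $N\in E_{2}$; but $N$ is obtained from Cohn's classical matrix $\left(\begin{smallmatrix} 1+xy & x^{2} \\ -y^{2} & 1-xy\end{smallmatrix}\right)$ by left multiplication by $\left(\begin{smallmatrix} 0 & 1 \\ -1 & 0\end{smallmatrix}\right)$ and $-\mathrm{Id}$ (both in $E_{2}$) and the interchange of the two variables, and Cohn's matrix is not in $E_{2}(k[x,y])$. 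Hence no product of such transvections has second row $(\alpha,f)$ in general. This is exactly why the lemma asserts only that $\sigma$ is \emph{isotopic to the identity}, which is strictly weaker than lying in $E(Q[T]\op R[T])$: the thesis does not reprove the lemma but quotes it from (\cite{Bh-Raja-4}, Lemma 3.2), where the monic hypothesis is exploited structurally (splitting and patching in the style of Horrocks--Quillen applied to $\ker(\alpha,f)$), not by an elementary reduction of the row to $(0,1)$. Any correct proof must produce automorphisms beyond the elementary subgroup, so the approach needs a different key idea, not just a patch at the $\lambda$-step.
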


The following two results are  easy to prove. We give the proof for the sake
of completeness.

\begin{lemma}\label{ex}
Let $A$ be a ring and let $P$ be a projective $A$-module. Let $\Phi
=(\phi,f(T)) : P[T]\op A[T] \surj A[T]$ be a surjection.  
Suppose $f(T)\in A[T]$ is a monic polynomial. Then,
kernel of $\Phi$ is extended from $A$.
\end{lemma}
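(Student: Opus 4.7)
The plan is to invoke Lemma~\ref{isotopic} directly and then trace through what it says about kernels. First I would apply Lemma~\ref{isotopic} with $R = A$, $Q = P$, $\alpha(T) = \phi$ and the given monic $f(T)$; this produces an automorphism $\sigma(T)$ of $P[T] \op A[T]$ (isotopic to identity, though I will not need the isotopy for this lemma) satisfying
\[
pr_2 \circ \sigma(T) \;=\; (\phi, f(T)) \;=\; \Phi,
\]
where $pr_2 : P[T] \op A[T] \surj A[T]$ is the projection onto the second factor.

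Next, I would observe that this identity of maps translates directly into an identification of kernels. For any $x \in P[T] \op A[T]$, $\Phi(x) = 0$ if and only if $pr_2(\sigma(T)(x)) = 0$, i.e.\ if and only if $\sigma(T)(x) \in \ker(pr_2) = P[T] \op 0$. Hence $\sigma(T)$ restricts to an $A[T]$-linear isomorphism
\[
\sigma(T) : \ker(\Phi) \iso P[T] \op 0 \;\cong\; P[T].
\]

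Finally, $P[T] = P \ot_A A[T]$ is by construction extended from the $A$-module $P$. Since being extended from $A$ is preserved under $A[T]$-module isomorphism, $\ker(\Phi)$ is also extended from $A$, and is in fact isomorphic to $P[T]$ as an $A[T]$-module.

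There is no real obstacle in this argument: the entire content of the lemma is packaged inside Lemma~\ref{isotopic}, whose proof is where the monicity of $f(T)$ gets used in an essential way (to perform the isotopy construction). So the role of the present lemma is mainly to repackage that technical statement as a clean assertion about extendedness of kernels, convenient for later citation.
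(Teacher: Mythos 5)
Your argument is correct as a deduction inside the thesis, but it is a genuinely different route from the one the paper takes. The paper's own proof is elementary modulo two classical theorems: it uses Quillen's local--global principle to reduce to the case where $A$ is local (so $P$ is free), and then Horrocks' theorem, observing that $\ker(\Phi)_f \cong P[T]_f$ is free because $f$ becomes a unit; this yields extendedness of $\ker(\Phi)$ without ever invoking Lemma~\ref{isotopic}. You instead feed the hypotheses directly into Lemma~\ref{isotopic} and read off that the automorphism $\sigma(T)$ carries $\ker(\Phi)$ isomorphically onto $\ker(pr_2)=P[T]$; the kernel identification $\Phi(x)=0 \Leftrightarrow \sigma(T)(x)\in P[T]\op 0$ is fine, and your conclusion is even sharper than the lemma asserts ($\ker(\Phi)\cong P[T]$, not merely extended; the paper's argument only identifies $\ker(\Phi)$ with $N[T]$ for some $N$ stably isomorphic to $P$). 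What you give up is logical economy and, arguably, the point of the lemma: Lemma~\ref{isotopic} is the strictly stronger statement (as your one-line deduction shows), it is only quoted in the thesis from Bhatwadekar--Raja Sridharan, and its proof in that source rests on Horrocks/Quillen-type extendedness input of exactly the kind Lemma~\ref{ex} encapsulates -- so deriving \ref{ex} from \ref{isotopic} inverts the natural order of the material and risks circularity once the citation is unwound, whereas the paper's two-line Quillen--Horrocks argument is self-contained, which is presumably why it is written out ``for the sake of completeness.'' Also note that, as you yourself remark, the isotopy property of $\sigma(T)$ plays no role in your argument; only the bare existence of an automorphism with $pr_2\,\sigma(T)=\Phi$ is used.
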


\begin{proof}
Let $Q=$ ker$(\Phi)$.
By (\cite{Quillen}, Theorem 1), it is enough to show that $Q_{\mm}$ is
free for every maximal ideal $\mm$ of $A$. Hence, we can assume that
$A$ is local and hence $P$ is free. 
Applying Horrock's
theorem (see \cite{Horrocks}) which says that 
``if $A$ is local ring and $\wt P$ is a projective
$A[T]$-module, then $\wt P_f$ free for $f\in A[T]$ monic implies that
$\wt P$ is
free'', it is enough to show that $Q_f$ is free. But $Q_f \iso P[T]_f$
which is free. Hence, we are through. 
$\hfill \square$
\end{proof}

\begin{lemma}\label{lem9}
Let $A$ be a ring and let $P$ be a projective
$A$-module. Let $\Phi : P[T] \surj A[T]$ and $\Psi : P[T] \surj
A[T]$ be two surjections such that $\Phi(0) = \Psi(0).$ 
Further, assume that the projective $A[T]$-modules kernel of $\Phi$ and
kernel of $\Psi$ are extended from $A$. Then, there exists an
automorphism $\Delta$ of $P[T]$ such that
$\Psi \Delta = \Phi$ and $\Delta(0) = Id$
\end{lemma}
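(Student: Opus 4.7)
The plan is to construct $\Delta$ by choosing compatible splittings of the two surjections at $T=0$ and then aligning the kernels via the extended-from-$A$ hypothesis. Throughout, write $M := \ker\Phi(0) = \ker\Psi(0) \subset P$, so that any splitting of $\Phi$ (respectively $\Psi$) reduces at $T=0$ to a decomposition $P = M \oplus (\text{image of the section at } 0)$.

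First I would pick an arbitrary section $\sigma_\Phi : A[T] \to P[T]$ of $\Phi$, and then produce a section $\sigma_\Psi$ of $\Psi$ satisfying $\sigma_\Psi(0) = \sigma_\Phi(0)$. Setting $p_0 := \sigma_\Phi(0)(1) \in P$ and viewing it as a constant element of $P[T]$, the hypothesis $\Psi(0) = \Phi(0)$ forces $\Psi(p_0) = 1 + Tg(T)$ for some $g \in A[T]$. For any section $\tau$ of $\Psi$, the element $q := p_0 - Tg(T)\,\tau(1)$ satisfies $\Psi(q) = 1$ and $q|_{T=0} = p_0$, so $\sigma_\Psi(a) := aq$ is the desired section. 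This yields decompositions
\[
P[T] \;=\; \ker\Phi \,\oplus\, \sigma_\Phi(A[T]) \;=\; \ker\Psi \,\oplus\, \sigma_\Psi(A[T])
\]
whose reductions at $T=0$ agree on the second summand and both equal $P = M \oplus \sigma_\Phi(0)(A)$.

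Next, since both $\ker\Phi$ and $\ker\Psi$ are extended from $A$, each is isomorphic as an $A[T]$-module to $M \otimes_A A[T] = M[T]$ (the reduction at $T=0$ pins down $M$). The delicate point I would argue is that these isomorphisms can be normalized to be the identity on $M$ at $T=0$: any isomorphism $\theta'_\Phi : M[T] \iso \ker\Phi$ reduces at $T=0$ to an automorphism of $M$, and replacing $\theta'_\Phi$ by $\theta'_\Phi \circ (\theta'_\Phi(0)^{-1})[T]$ produces a normalized $\theta_\Phi$. Doing the same for $\Psi$, the composite $\gamma := \theta_\Psi \theta_\Phi^{-1}$ is an isomorphism $\ker\Phi \iso \ker\Psi$ satisfying $\gamma(0) = \mathrm{Id}_M$.

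Finally I define $\Delta(k + \sigma_\Phi(a)) := \gamma(k) + \sigma_\Psi(a)$. Because $\gamma$ is an $A[T]$-linear isomorphism between the two kernels and $\sigma_\Psi \sigma_\Phi^{-1}$ is an isomorphism of the complementary summands, $\Delta$ is an automorphism of $P[T]$. It satisfies $\Psi\Delta = \Phi$ since both maps send $k + \sigma_\Phi(a)$ to $a$, and $\Delta(0) = \mathrm{Id}$ since $\gamma(0) = \mathrm{Id}_M$ and $\sigma_\Psi(0) = \sigma_\Phi(0)$. The main obstacle is the normalization in the middle step; once the two extended kernels are matched to $M[T]$ in a way compatible with $T=0$, the rest is bookkeeping through the direct-sum decomposition.
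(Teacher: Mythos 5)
Your argument is correct and is essentially the paper's own proof: both hinge on the same two facts, namely that the exact sequences $0\to\ker\Phi\to P[T]\to A[T]\to 0$ and its analogue for $\Psi$ are split, and that the extended-from-$A$ hypothesis identifies each kernel with $M[T]$ where $M=\ker\Phi(0)=\ker\Psi(0)$. The only difference is organizational: the paper compares each of $\Phi$ and $\Psi$ with the constant surjection $\Phi(0)\otimes A[T]$ and fixes the value at $T=0$ afterwards by composing with the constant automorphism $(\Lambda(0)\otimes A[T])^{-1}$, whereas you normalize the sections and the kernel isomorphisms at the outset and assemble $\Delta$ directly.
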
   

\begin{proof}
We first show that there exists an automorphism $\Theta$ of $P[T]$
such that $\Theta(0)= Id$ and $\Phi\,\Theta= \Phi(0) \ot
A[T]$. Let $Q=$ ker$(\Phi)$ and $L=$ ker$(\Phi(0))$.  Since
$Q$ is extended from $A$, there exists an isomorphism $\Gamma : L[T] \iso
Q$.  Now, since the rows of the following diagram 
\Com
$$
\xymatrix{ 
	0 \ar [r] & L[T] \ar [r] \ar [d]^\Gamma & P[T] \;\;\;\ar
	[r]^{\Phi(0)\ot A[T]} \ar@{-->} [d]^\gL & 
	A[T] \ar [r] \ar [d]^{Id} & 0  \\ 
	0 \ar [r] & Q \ar [r] & P[T] \ar [r]_{\Phi} &
	A[T] \ar [r] & 0
	}$$ 
are split, we can find an automorphism $\gL$ of $P[T]$ such that
the above diagram is commutative. We  have 
$\Phi\gL= \Phi(0)\ot A[T]$ and hence
$\Phi(0)\gL(0)=\Phi(0)$. Consider an automorphism 
$\Theta= \gL(\gL(0)\ot A[T])^{-1}$ of $P[T]$. 
Then $\Phi \Theta= ( \Phi(0)\ot A[T])
	(\gL(0)\ot A[T])^{-1}= 
	(\Phi(0)\ot A[T])$ and $\Theta(0)= Id$.

Similarly, we have an automorphism $\gT_1$ of $P[T]$ such that
$\Psi\gT_1=\Psi(0)\ot A[T]$ and $\gT_1(0)= Id$.
Consider the automorphism $\Delta=\gT_1\Theta^{-1}$ of
$P[T]$.  As $\Phi(0) = \Psi(0)$, we have
	$\Psi\Delta=(\Psi(0)\ot
	A[T])\Theta^{-1}=(\Phi(0)\ot A[T])
	\Theta^{-1}= \Phi$
and $\Delta(0)= Id$. This proves the lemma.  
$\hfill \square$
\end{proof}

\begin{theorem}\label{5.4}
Let $A$ be a ring of dimension $d$ containing an infinite
field $k$ and let $\wt P$ be a projective $A[T]$-module of rank $n$
which is extended from $A$,
where $2n\geq d+3$. Suppose $\wt P_{f(T)}$ has a unimodular element for
some monic polynomial $f(T)\in A[T]$. Then $\wt P$ has a unimodular
element.
\end{theorem}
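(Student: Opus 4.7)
The plan is to produce a unimodular element of $\wt P$. Since $\wt P = P[T]$ is extended from the $A$-module $P$, a unimodular element of $\wt P$ yields one in $P$ by evaluating a dual at $T = 0$, and conversely any unimodular element of $P$ sits inside $\wt P$, so the two problems are equivalent.

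The first step is to invoke Lemma \ref{006} with the given hypothesis: this produces a surjection $\Phi : \wt P \surj I$ where $I \subset A[T]$ is an ideal of height $\geq n$ containing a monic polynomial $g(T)$. If $I = A[T]$, then $\Phi$ itself is a surjection $\wt P \surj A[T]$, exhibiting a unimodular element of $\wt P$, and we are done. So we may assume $\hh I = n$. By Lemma \ref{ht} we then have $\hh (I\cap A) \geq n-1$, and the hypothesis $2n \geq d+3$ forces $\dim (A[T]/I) \leq d-(n-1) \leq n-2$, a bound which will be crucial for invoking Serre/Bass type transitivity results.

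The second step is to modify $\Phi$ so that its image is forced to become $A[T]$. I would apply the moving lemma \ref{move} (with $f = T$) to the surjection $\wt P \surj I/(I^2T)$ induced by $\Phi$: this produces a lift $\Delta \in \Hom_{A[T]}(\wt P, I)$ whose image decomposes as $\Delta(\wt P) = I \cap I'$, where $I'$ is an ideal of height $\geq n$ satisfying $I'+(J^2T) = A[T]$ (so in particular $I'(0)=A$) with $J = I\cap A$. Since $I$ still contains the monic $g$, Mandal-type lifting (\ref{mand1}, applied after writing $\wt P$ with a free summand of rank $\geq 1$ using the dimension bound above and Serre's theorem \ref{Serre}) lets us lift the induced surjection $\wt P \surj I'/{I'}^2$ to a surjection $\Psi : \wt P \surj I'$ compatible with $\Delta$. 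Then the auxiliary Lemmas \ref{ex}, \ref{isotopic}, and \ref{lem9} (tailored precisely to compare two surjections onto $A[T]$ whose kernels are extended from $A$, when a monic polynomial is available) are used to identify $\Phi$ and $\Psi$ up to an automorphism of $\wt P$ fixing $T=0$, patching the data to produce the desired surjection $\wt P \surj A[T]$.

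The principal obstacle is that the strong subtraction principle for polynomial algebras (Corollary \ref{subtract}) is \emph{not} available here, since $A$ is only assumed to contain an infinite field rather than to be regular. The workaround is to exploit the monic polynomial in $I$ (through Mandal's theorem \ref{mand1}), together with the structural Lemmas \ref{ex}, \ref{isotopic}, \ref{lem9} which encode the interaction between projective modules extended from $A$, surjections onto $A[T]$, and kernels containing monic polynomials. The dimensional inequality $2n\geq d+3$ must be threaded carefully through the argument to keep both the dimension bound $\dim(A[T]/I) \leq n-2$ and the presence of sufficient free summands (ensuring the hypotheses of \ref{mand1} and \ref{isotopic} are met) alive at each step.
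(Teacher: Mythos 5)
Your first step (Lemma \ref{006}, reduction to $\hh I = n$, the dimension count) matches the paper, but the second step has two genuine gaps. First, the proposed use of \ref{mand1} is a misapplication: that lemma lifts a surjection onto $I'/L$ only when the sub-ideal $L\subset {I'}^2$ contains a \emph{monic} polynomial, and the residual ideal $I'$ produced by the moving lemma has no reason to contain a monic --- the monic $g$ lies in $I$, which is useless for lifting onto $I'$ (all one knows about $I'$ is that $I'+(J^2T)=A[T]$, i.e.\ $I'(0)=A$). Second, even granting surjections $\Phi:\wt P\surj I$ and $\Psi:\wt P\surj I'$, you have no mechanism to convert these surjections onto \emph{proper} ideals into a surjection onto $A[T]$: Lemmas \ref{ex}, \ref{isotopic} and \ref{lem9} all concern surjections onto $A[T]$ (unimodular elements) and do not compare maps onto $I$ and $I'$; passing from ``$I$ and its residual $I'$ are both images'' to a unimodular element is exactly an addition/subtraction-principle step, which, as you yourself note, is unavailable without regularity. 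So the outline collapses at precisely the point where the work has to be done.

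The paper's actual route is a Quillen-patching argument over a cover of $\Spec A$, and the moving lemma plays no role. With $J=I\cap A$, one localizes at $1+J$: there $\dim(A/J)\leq n-2$ gives a free summand $P_{1+J}=Q\op A_{1+J}$ (Serre), and \ref{mand1} \emph{does} apply (the monic lives in $I$, which is the target there), producing a lift $\Psi=(\psi,h(T))$ of $\phi\ot A_{1+J}[T]$ with $h(T)$ monic; via \ref{isotopic} the projection $pr_2$ is then a unimodular element of $P_b[T]^*$ isotopically connected to $\Psi$, where $b=1+a$, $a\in J$. On the other chart, since $a\in J\subset I(0)$, one has $I(0)_a=A_a$, so $\Phi(0)_a\ot A_a[T]$ is already a surjection $P_a[T]\surj A_a[T]$. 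Lemmas \ref{ex}, \ref{lem9} and \ref{lem8} are used on the overlap $A_{ab}$ to show $\Psi_a$ is isotopically connected to $\Phi(0)_{ab}\ot A_{ab}[T]$; combining, $(pr_2)_a$ and $\Phi(0)_{ab}\ot A_{ab}[T]$ are connected by an automorphism isotopic to the identity, which Quillen's splitting lemma \ref{isotopy} factors as $\Omega'_b\,\Omega_a$, and the two local surjections patch to a surjection $P[T]\surj A[T]$. If you want to salvage your write-up, replace the moving-lemma step by this localization at $a$ and $1+a$ and the isotopy/patching mechanism; without it the proof does not go through.
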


\begin{proof}
By (\ref{006}), we get a surjection $\Phi : \wt P\surj I$,
where $I$ is an ideal of height $\geq n$ containing a monic
polynomial. Since $I$ is locally generated by $n$ elements, 
if $\hh I > n$, then $I=A[T]$ and hence
$\wt P$ has a unimodular element.
Hence, we assume that $\hh I=n$. 

Since $\wt P$ is extended from $A$, we write
$\wt P = P[T]$, where $P$ is a projective $A$-module of rank $n$.
Then $\Phi$ induces a surjection $\phi : P[T] \surj I/(I^2T)$ which
in its turn induces a surjection $\Phi(0) : P \surj I(0)$.

Let $J=A\cap I$. Then $\hh J \geq n-1$, by (\ref{ht}).
Since $\dim (A/J) \leq d-(n-1) \leq n-2$ and
$J\subset \CJ(A_{1+J})$, by Serre's result 
(\ref{Serre}), $P_{1+J}$ has a free direct
summand. Let $P_{1+J} = Q\op A_{1+J}$ for some projective
$A_{1+J}$-module $Q$ of rank $n-1$. Since $\dim (A/J) \leq n-2$, by
(\ref{mand1}), the surjection $\phi\ot A_{1+J}[T]$ can be lifted to
 a surjection $\Psi (= (\psi,h(T)))
 : P_{1+J}[T] (=Q[T] \op A_{1+J}[T]) \surj I_{1+J}$ with
$h(T)$ a monic polynomial.
Hence $\Phi(0)\ot A_{1+J} = \Psi(0)$. 
 
It is easy to see that there exists $a\in J$
such that if $b=1+a$, then, there exists a projective $A_b$-module $Q_1$
with the properties (i) $Q_1\ot A_{1+J} =Q$, (ii) $P_b = Q_1\op A_b$, (iii)
$\Psi: P_b[T] \surj IA_b[T]$ and (iv)  
$ \Phi(0)_b = \Psi(0)$.
Let $pr_2 : Q_1[T]\op A_b[T] \surj A_b[T]$ be the surjection defined
by $pr_2(q,x)=x$ for $q\in Q_1[T]$ and $x\in A_b[T]$. We have the
followings: 

$(1)$ Since $a\in J$, $I(0)_a=A_a$ and hence $\Phi(0)_a\ot A_a[T]$ is 
a surjection from $P_a[T]$ to $A_a[T]$.
Since $\Psi_a = (\psi,h(T))_a$ is a unimodular element
of $P_{ab}[T]^*$ with $h(T)$ monic,  by (\ref{isotopic}), 
unimodular elements $(pr_2)_a$ and 
$\Psi_a$ of $P_{ab}[T]^*$ are isotopically connected.

$(2)$ Since $h(T)$ is monic, by (\ref{ex}), kernel of $\Psi_a$ is a
projective $A_{ab}[T]$-module which is extended from $A_{ab}$.
Therefore, applying (\ref{lem9}) for the surjections $\Psi_a$ and
$\Psi(0)_a \ot A_{ab}[T]$,
there exists an automorphism $\Theta$ of $P_{ab}[T]$
such that $\Theta(0)$ is identity automorphism of $P_{ab}$ and
$\Psi_a \Theta = \Psi(0)_a\ot A_{ab}[T] = \Phi(0)_{ab} \ot A_{ab}[T]$.
By (\ref{lem8}), $\Theta$ is isotopic to identity.  
Hence, by (\ref{lem7}), 
$\Psi_a$ and $\Phi(0)_{ab} \ot A_{ab}[T]$ are isotopically
connected.

Thus, combining $(1)$ and $(2)$, the 
unimodular elements $(pr_2)_a$ and $\Phi(0)_{ab} \ot
A_{ab}[T]$ are isotopically connected.  
Therefore, there exists an automorphism $\Gamma$ of $P_{ab}[T]$ such that
$\Gamma$ is isotopic to identity and $\Phi(0)\ot A_{ab}[T]\;
 \Gamma = (pr_2)_a$.

Applying (\ref{isotopy}), we get $\Gamma = \Omega'_b\, \Omega_a$,
where $\Omega$ is an $A_b[T]$-automorphism of $P_b[T]$ and $\Omega'$
is an $A_a[T]$-automorphism of $P_a[T]$.  Hence, we have surjections
$\Delta_1=pr_2 \,\Omega^{-1} : P_b[T] \surj A_b[T]$ and
$\Delta_2=\Phi(0)\ot A_a[T]\; \Omega' : P_a[T] \surj A_a[T]$ such that
$(\Delta_1)_a = (\Delta_2)_b$. Therefore, they patch up to yield a
surjection $\Delta : P[T] \surj A[T]$. Hence $\wt P = P[T]$ has a
unimodular element. This proves the result.  $\hfill \square$
\end{proof}
\medskip

Since every projective $A[T]$-module is extended from $A$, when $A$ is
a regular ring containing a field \cite{pope}. Hence, the following
corollary is immediate from the above result.

\begin{corollary}
Let $A$ be a regular ring of dimension $d$ containing an infinite
field $k$ and let $\wt P$ be a projective $A[T]$-module of rank $n$,
where $2n\geq d+3$. Suppose $\wt P_{f(T)}$ has a unimodular element for
some monic polynomial $f(T)\in A[T]$. Then $\wt P$ has a unimodular
element.
\end{corollary}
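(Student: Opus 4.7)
The plan is essentially to observe that this corollary is a direct consequence of Theorem \ref{5.4} combined with the Bass--Quillen conjecture for regular rings containing a field, which is Popescu's theorem. The hypothesis ``$\wt P$ is extended from $A$'' is the only assumption in Theorem \ref{5.4} that is not already present in the corollary, so my entire task is to verify that this extendedness comes for free under the regularity hypothesis.

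First I would invoke Popescu's theorem: if $A$ is a regular ring containing a field, then every finitely generated projective $A[T]$-module is extended from $A$. Applied to $\wt P$, this gives a projective $A$-module $P$ of rank $n$ with $\wt P \simeq P[T]$. Then, with this identification in hand, the remaining hypotheses of Theorem \ref{5.4} are immediate: $A$ contains an infinite field $k$, $\dim A = d$, the rank condition $2n \geq d+3$ is the same, and $\wt P_{f(T)} = P[T]_{f(T)}$ has a unimodular element for a monic polynomial $f(T) \in A[T]$. Applying Theorem \ref{5.4} now yields a unimodular element of $\wt P$, which is the required conclusion.

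There is essentially no obstacle in the proof of the corollary itself: the entire substance is encoded in Theorem \ref{5.4}, which already does the serious work of passing from local-at-a-monic to global using the subtraction principle and isotopy techniques. The only non-trivial external input is the citation to Popescu (or, in the originally conjectured form, Bass--Quillen for regular rings containing a field), and one simply needs to be sure its statement is applied at the correct level of generality, namely to an arbitrary regular ring containing a field (not only to affine regular algebras). Given this, the proof is a two-line assembly and needs no further argument.
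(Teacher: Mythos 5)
Your proposal is correct and is exactly the paper's argument: the corollary is deduced from Theorem \ref{5.4} after invoking Popescu's theorem \cite{pope} that every projective $A[T]$-module is extended from $A$ when $A$ is a regular ring containing a field. Nothing further is needed.
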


Now, we prove our second result which is a complement of the ``Subtraction  
principle'' (\ref{subtraction}) and is labeled as ``Addition principle''.
For this result we need the following lemma
which is proved in (\cite{Bh-Raja-3}, Corollary 2.14) for $n=d$
and in (\cite{Bh-Raja-5}, Corollary 2.4) in the case $P$ is free. The
idea of the proof here is same as in (\cite{Bh-Raja-3, Bh-Raja-5}). We
give the proof for the sake of completeness.

\begin{lemma}\label{moving}
Let  $A$ be a  ring of dimension $d$ and let $P$ be a
projective $A$-module of rank $n$, where $2n\geq d+1$. Let $J\subset
A$ be an ideal of height $n$ and let $\phi :
P/JP \surj J/J^2$ be a surjection. Then, there exists an ideal
$J'\subset A$ of height $\geq n$, comaximal with $J$
 and a surjection $\Phi : P\surj J\cap J'$ such that 
	$\Phi \ot A/J = \phi $.
Further, given finitely many ideals $J_1,\ldots,J_r$ of height
$n$, $J'$ can be chosen to be comaximal with $\cap_1^r J_i$. 
\end{lemma}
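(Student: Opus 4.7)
The plan is to lift $\phi$ to an $A$-linear map $\Phi_0:P\to J$ and then perturb it by $b\beta$ for suitable $b\in J^2$ and $\beta\in P^*$ supplied by the Eisenbud--Evans theorem (\ref{EE}), using an additional prime-avoidance layer to secure the comaximality with $J_0:=J_1\cap\cdots\cap J_r$.

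First, since $P$ is projective, $\phi$ lifts to $\Phi_0\in\Hom_A(P,J)$ with $\Phi_0(P)+J^2=J$, and by (\ref{002}) there exists $b\in J^2$ with $\Phi_0(P)+(b)=J$. The hypothesis that the ideal $(\Phi_0(P),b)=J$ has height $n$ lets me apply Theorem \ref{EE} to $(\Phi_0,b)\in P^*\oplus A$, producing $\beta\in P^*$ such that $I:=(\Phi_0+b\beta)(P)$ satisfies $\hh I\geq n$; since $I\subset J$ is proper, in fact $\hh I=n$. Setting $\Phi:=\Phi_0+b\beta$, the relation $b\beta\in J^2 P^*$ gives $\Phi\otimes A/J=\phi$, and since $\Phi(P)+(b)=\Phi_0(P)+(b)=J$ with $b\in J^2$, Lemma (\ref{002}) yields $\Phi(P)=J\cap J'$ with $J'+(b)=A$, and hence $J+J'=A$; the height bound plus $J'\supset I$ gives $\hh J'\geq n$.

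To secure the addendum, I observe that the minimal primes of $J_0$ are exactly the minimal primes of the various $J_i$, all of height $n$, and those containing $J$ automatically miss $J'$ (since $J+J'=A$). Let $\mathcal{P}$ denote the finite set of minimal primes of $J_0$ not containing $J$. Since the minimal primes of $J'$ coincide with the minimal primes of $\Phi(P)$ not containing $J$, each of height $n$, the condition $J'+J_0=A$ is equivalent to $\Phi(P)\not\subset\mathfrak{p}$ for every $\mathfrak{p}\in\mathcal{P}$. I plan to arrange this by two prime-avoidance refinements of the above construction: (i) first replace $\Phi_0$ by $\Phi_0+\eta$ for suitable $\eta\in J^2 P^*$ so that $\Phi_0(P)\not\subset\mathfrak{p}$ for every $\mathfrak{p}\in\mathcal{P}$ (possible since $J^2\not\subset\mathfrak{p}$); (ii) when choosing $\beta$ by prime avoidance inside the Eisenbud--Evans argument, impose the additional condition $\beta\in\mathfrak{p}P^*$ for each $\mathfrak{p}\in\mathcal{P}$ with $b\notin\mathfrak{p}$. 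Together, (i) and (ii) guarantee $\Phi(P)\equiv\Phi_0(P)\pmod{\mathfrak{p}}$ for each $\mathfrak{p}\in\mathcal{P}$, hence $\Phi(P)\not\subset\mathfrak{p}$.

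The main obstacle is verifying that the extra condition in (ii) is compatible with the Eisenbud--Evans prime-avoidance demand on $\beta$. The demand in (\ref{EE}) is that $\beta$ avoid a finite list of primes $\mathfrak{q}$ of height $<n$ (those $\mathfrak{q}\not\ni b$ with $\Phi_0(P)\subset\mathfrak{q}$, where shifting by $b\beta$ must break containment), while (ii) requires $\beta\in LP^*$ for $L:=\bigcap_{\mathfrak{p}\in\mathcal{P},\,b\notin\mathfrak{p}}\mathfrak{p}$. Since each $\mathfrak{p}$ in this intersection has height $n$ and each bad $\mathfrak{q}$ has height $<n$, no $\mathfrak{p}\subset\mathfrak{q}$; because $\mathfrak{q}$ is prime, $L\subset\mathfrak{q}$ would force some $\mathfrak{p}\subset\mathfrak{q}$, a contradiction. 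Hence $L\not\subset\mathfrak{q}$, so $LP^*\not\subset\mathfrak{q}P^*$, and a single $\beta\in LP^*$ satisfying all avoidance conditions exists by standard prime avoidance for modules.
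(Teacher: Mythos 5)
Your construction of $\Phi$ and $J'$ for the main assertion is correct, and it is essentially the second half of the paper's own argument. The gap is in the addendum. You reduce the comaximality $J'+(J_1\cap\cdots\cap J_r)=A$ to the condition that $\Phi(P)\not\subset\mathfrak{p}$ for every minimal prime $\mathfrak{p}$ of $J_0:=J_1\cap\cdots\cap J_r$ not containing $J$, asserting that the two are equivalent. They are not: arranging $J'\not\subset\mathfrak{p}$ for all minimal primes $\mathfrak{p}$ of $J_0$ only says that every prime containing $J'+J_0$ has height $>n$; it does not prevent $J'$ and $J_0$ from lying inside a common maximal ideal. For instance, with $d=3$, $n=2$ (so $2n\geq d+1$), the height-two ideals $(x,y)$ and $(y,z)$ of $k[x,y,z]$ avoid each other's minimal primes but are not comaximal. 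So even granting your refinements (i) and (ii) --- and (ii) already requires reopening the proof of (\ref{EE}) to impose the extra constraint $\beta\in L P^*$, which the quoted black-box statement does not provide --- your argument delivers only $\hh (J'+J_0)>n$, not $J'+J_0=A$, and comaximality is exactly what the later applications (e.g.\ the Addition principle, via (\ref{subtraction})) need.

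The paper gets genuine comaximality by forcing the splitting element into $J_0$ from the start: set $K=J^2\cap J_1\cap\cdots\cap J_r$, an ideal of height $n$, and first lift $\phi$ to a surjection $P/KP\surj J/K$ (this is an extra Eisenbud--Evans step, using $\dim (A/K)\leq d-n\leq n-1$ together with (\ref{010})). Lifting that to $\Theta\in \Hom_A(P,J)$ gives $\Theta(P)+K=J$, so by (\ref{002}) one can choose the element $a$ with $\Theta(P)+(a)=J$ inside $K\subset J^2\cap J_0$; running your final Eisenbud--Evans perturbation with this $a$ in place of your $b$, (\ref{002}) then yields $\Theta'(P)=J\cap J'$ with $J'+(a)=A$, and since $a\in J^2\cap J_1\cap\cdots\cap J_r$ this single relation gives simultaneously $J+J'=A$ and $J_i+J'=A$ for all $i$. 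Your proof, which only knows $\Phi_0(P)+J^2=J$ and hence can only take $b\in J^2$, has no mechanism to place the splitting element inside $J_0$, and that is precisely where the comaximality is lost.
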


\begin{proof}
Let $K=J^2\cap J_1\cap \ldots\cap J_r$. Then, by assumption,
$\hh K=n$. First, we show that the surjection $\phi$ can be lifted to a
surjection from $P/KP$ to $J/K$.

Since $K\subset J^2$, $(J/K)^2=J^2/K$. Let $\Psi
\in \Hom_{A/K}(P/KP,J/K)$ be a lift of $\phi$. Then
$\Psi(P/KP)+J^2/K=J/K$ and hence, by (\ref{002}), there exists $c\in
J^2/K$ such that $\Psi(P/KP)+(c)=J/K$. Now, applying Eisenbud-Evans
theorem (\ref{EE}), 
there exists $\Psi' \in (P/KP)^*$ such that 
$\hh N_c \geq n$, where $N=(\Psi+c\Psi')(P/KP)$. Since $\hh
K\geq n$, $\dim (A/K) \leq d-n \leq n-1$. This implies that
$N_c=(A/K)_c$. Hence $c^s \in N$ for some positive integer
$s$. Therefore, as $N+(c)=J/K$ and $c\in (J/K)^2$, we have $N=J/K$, by
(\ref{010}). Thus, as $\Psi''=\Psi+c\Psi'$ is also a lift of $\phi$, the
claim is proved.

Let $\Theta\in \Hom_A(P,J)$ be a lift of $\Psi''$. Then, as
$J/K=\Psi''(P/KP)$, we have $\Theta(P)+K=J$. By (\ref{002}), we get
$a\in K$ such that $\Theta(P)+(a)=J$. Again, applying (\ref{EE}) to
the element $(\Theta,a) \in P^*\op A$, there exists
$\Theta'\in P^*$ such that $\hh J_1=n$, where
$J_1=(\Theta+a\Theta')(P)$. 

Since $J_1+(a)=J$ and $a\in J^2$, by (\ref{002}), $J_1=J\cap J'$ and
$J'+(a)=A$. Now, setting $\Phi=\Theta+a\Theta'$, we are through.
$\hfill \square$
\end{proof}

\begin{theorem}\label{add}
{\rm (Addition Principle)}
Let $A$ be a  ring of dimension $d$. Let $J_1, J_2 \subset
A$ be two comaximal ideals of height $n$, where $2n \geq d+3$. Let
$P=Q\op A$ be a projective $A$-module of rank $n$. Let $\Phi : P\surj
J_1$ and $\Psi : P\surj J_2$ be two surjections.
Then, there exists a surjection $\Theta : P\surj J_1\cap J_2$
such that $\Phi \ot A/J_1= \Theta \ot A/J_1$ and 
$\Psi \ot A/J_2 = \Theta \ot A/J_2$.   
\end{theorem}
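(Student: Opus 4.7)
The plan is to combine $\Phi$ and $\Psi$ by the Chinese Remainder Theorem at the level of $J/J^2$, where $J:=J_1\cap J_2$, lift the result via the Moving Lemma, and then strip away an auxiliary residual ideal by three applications of the Subtraction Principle.

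First I would set up the CRT machinery. Because $J_1+J_2=A$, we have $J=J_1J_2$ and $J^2=J_1^2J_2^2=J_1^2\cap J_2^2$, so the CRT gives a canonical decomposition $J/J^2 \iso J_1/J_1^2 \op J_2/J_2^2$; by flatness of $P$ we likewise have $P/JP\cong P/J_1P\times P/J_2P$. Using these, the reductions $\ol\Phi : P/J_1P\surj J_1/J_1^2$ and $\ol\Psi : P/J_2P\surj J_2/J_2^2$ assemble into a surjection $\ol\Theta : P/JP \surj J/J^2$ whose $J_i/J_i^2$-components are $\ol\Phi$ and $\ol\Psi$. Applying the Moving Lemma (\ref{moving})---whose hypothesis $2n\geq d+1$ is implied by $2n\geq d+3$---to $\ol\Theta$, I obtain a lift $\Theta_0 : P \surj J\cap K$, where $K$ is comaximal with $J$ and has height $\geq n$. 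Since $\Theta_0(P)=J\cap K$ is the surjective image of a rank-$n$ projective module, the height of $K$ is forced to be exactly $n$ (or else $K=A$, in which case $\Theta_0$ already works). Moreover $K$ is then comaximal with each $J_i$, since $J\subset J_i$ and $K+J=A$.

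Next, I would invoke Theorem \ref{subtraction} three times in succession. First, from $\Phi : P\surj J_1$ and $\Theta_0 : P\surj J_1\cap(J_2\cap K)$---the agreement $\Phi\ot A/J_1 = \Theta_0\ot A/J_1$ holds because $\Theta_0$ lifts $\ol\Theta$, whose $J_1/J_1^2$-component is $\ol\Phi$---I extract a surjection $\Xi : P\surj J_2\cap K$ with $\Xi\ot A/(J_2\cap K) = \Theta_0\ot A/(J_2\cap K)$. Second, from $\Psi : P\surj J_2$ and $\Xi : P\surj J_2\cap K$: since $(\Xi-\Theta_0)(P)\subset(J_2\cap K)^2\subset J_2^2$ and $\Theta_0\equiv\Psi\pmod{J_2^2}$, the residue condition on $J_2$ holds, and I obtain $\xi : P\surj K$ matching $\Xi$ on $K$. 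Third, from $\xi : P\surj K$ and $\Theta_0 : P\surj K\cap J$ (the matching on $K$ is immediate from $(\Xi-\Theta_0)(P)\subset K^2$), I obtain the desired surjection $\Theta : P\surj J$ with $\Theta\ot A/J=\Theta_0\ot A/J$. Since $J^2\subset J_1^2\cap J_2^2$, this final congruence restricts to $\Theta\ot A/J_1 = \Phi\ot A/J_1$ and $\Theta\ot A/J_2 = \Psi\ot A/J_2$, as required.

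The main obstacle is not the high-level strategy but the bookkeeping: at each invocation of Theorem \ref{subtraction} one must check that the two ideals involved are mutually comaximal and both of height exactly $n$, and one must propagate the residue conditions carefully from one step to the next. The comaximalities all trace back to the two relations $J_1+J_2=A$ and $K+J=A$; the height-$n$ conditions are pinned down because every ideal appearing in the argument is a finite intersection of ideals each of which is the image of a linear map from the rank-$n$ projective module $P$, and hence has height at most $n$ by Krull's principal ideal theorem. No technique beyond the Moving Lemma (\ref{moving}) and the Subtraction Principle (\ref{subtraction}) is required.
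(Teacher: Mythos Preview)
Your proposal is correct and follows essentially the same route as the paper's proof: assemble $\Phi$ and $\Psi$ into a surjection onto $J/J^2$ via CRT, lift with Lemma~\ref{moving} to get $\Theta_0:P\surj J\cap K$, and then apply Theorem~\ref{subtraction} three times (subtracting $J_1$, then $J_2$, then $K$) to arrive at $\Theta:P\surj J$. Your naming differs ($\Theta_0,\Xi,\xi$ for the paper's $\Gamma,\Lambda,\Delta$), and you are slightly more explicit about the height and comaximality bookkeeping, but the argument is the same.
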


\begin{proof}
Let $J=J_1\cap J_2$. Since $J_1+J_2=A$, we have
$J/J^2=J_1/{J_1}^2\op J_2/{J_2}^2$. Hence $\Phi$
and $\Psi$ induces a surjection $\gamma : P\surj J/J^2$ such that
$\gamma \ot A/J_1 = \Phi \ot A/J_1$ and $\gamma \ot A/J_2 = \Psi\ot
A/J_2$. 

Applying (\ref{moving}), we get an ideal $K$ of height $n$ which is
comaximal with $J$ and a surjection $\Gamma : P\surj J\cap K$ such that
$\Gamma\ot A/J = \gamma\ot A/J$. Hence 
$\Gamma\ot A/J_1=\Phi \ot A/J_1$ and $\Gamma \ot A/J_2 = \Psi \ot A/J_2$.

Applying Subtraction principle 
(\ref{subtraction}) for the surjections $\Phi$ and
$\Gamma$, we get a surjection $\Lambda : P\surj J_2\cap K$ such that
$\Lambda \ot A/(J_2\cap K) = \Gamma\ot A/(J_2\cap K)$. Hence 
$\Lambda \ot A/J_2 = \Psi \ot A/J_2$.

Again, applying (\ref{subtraction}) for the surjections $\Psi$ and $\Lambda$,
 we get a surjection $\Delta : P\surj K$ such that
$\Delta \ot A/K = \Lambda \ot A/K$. Since $\Lambda \ot A/K = \Gamma
 \ot A/K$, we have $\Delta \ot A/K = 
\Gamma\ot A/K$.

Applying (\ref{subtraction}) for the surjections $\Delta$ and $\Gamma$, we get
a surjection $\Theta : P\surj J$ such that $\Theta \ot A/J = \Gamma\ot
A/J$. Hence $\Theta \ot A/J_1 = \Phi \ot A/J_1$ and
$\Theta \ot A/J_2 = \Psi \ot A/J_2$.  This proves the result.
  $\hfill \square$
\end{proof}
\medskip

In a similar manner, using (\ref{subtract}), we have the following
``Addition principle'' for polynomial algebra.

\begin{theorem}
Let $A$ be a regular domain of dimension $d$
containing an infinite field $k$ and let $n$ be an integer such that 
$2n \geq d+3$. Let $P=  P_1 \op A$ be a projective $A$-module of 
rank $n$ and let $I, I' \subset A[T]$ be two
comaximal ideals of height $n$. Let 
$\Gamma : P[T] \surj I$ and $\Theta : P[T] \surj I^\prime$ be two
surjections. 
Then, there exists a surjection $\Psi : P[T] \surj I \cap 
I^\prime$ such that $\Psi \ot A[T]/I = \Gamma \ot A[T]/I$ and $\Psi \ot
A[T]/I^\prime = \Theta\ot A[T]/I^\prime$. 
\end{theorem}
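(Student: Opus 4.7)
My plan is to imitate the proof of the affine Addition principle (\ref{add}) essentially verbatim, replacing each invocation of the Subtraction principle (\ref{subtraction}) by its polynomial-algebra analogue (\ref{subtract}). The one additional input is a ``moving'' step producing an ideal residual to $J := I \cap I'$ inside $A[T]$, which I obtain by applying the moving lemma (\ref{moving}) directly to the ring $A[T]$ of dimension $d+1$; its hypothesis $2n \ge (d+1)+1$ is implied by $2n \ge d+3$. Throughout, the module $P = P_1 \op A$ supplies the ``$Q \op A$'' structure required both by (\ref{subtract}) and by (\ref{moving}).

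First I set $J = I \cap I'$. Since $I + I' = A[T]$, the Chinese Remainder isomorphism gives $J/J^2 \iso I/I^2 \op I'/{I'}^2$, and $\Gamma, \Theta$ assemble into a single surjection $\gamma : P[T] \surj J/J^2$ which agrees with $\Gamma$ modulo $I$ and with $\Theta$ modulo $I'$. Applying (\ref{moving}) to the ring $A[T]$, the projective $A[T]$-module $P[T]$ of rank $n$, the ideal $J$ of height $n$, and the surjection $\gamma$, I obtain an ideal $K \subset A[T]$ of height $\ge n$ comaximal with $J$, together with a surjection $\Gamma_1 : P[T] \surj J \cap K$ such that $\Gamma_1 \ot A[T]/J = \gamma$. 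If $K = A[T]$ then $\Gamma_1$ itself is the required $\Psi$, so I may assume $\hh K = n$; the comaximality of $K$ with $J$ automatically forces comaximality of $K$ with each of $I$ and $I'$.

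Now I carry out three successive applications of (\ref{subtract}). From $\Gamma : P[T] \surj I$ and $\Gamma_1 : P[T] \surj I \cap (I' \cap K)$, which agree modulo $I$, I extract $\Lambda : P[T] \surj I' \cap K$ compatible with $\Gamma_1$ modulo $I' \cap K$; in particular $\Lambda \ot A[T]/I' = \Theta \ot A[T]/I'$. Next, from $\Theta : P[T] \surj I'$ and $\Lambda : P[T] \surj I' \cap K$, which now agree modulo $I'$, I extract $\Delta : P[T] \surj K$ compatible with $\Gamma_1$ modulo $K$. Finally, from $\Delta : P[T] \surj K$ and $\Gamma_1 : P[T] \surj K \cap J$, which agree modulo $K$, I extract $\Psi : P[T] \surj J$ compatible with $\Gamma_1$ modulo $J$. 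Reducing the resulting identity $\Psi \ot A[T]/J = \gamma$ modulo $I$ and modulo $I'$ delivers the two required compatibilities with $\Gamma$ and $\Theta$.

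Because (\ref{subtract}) already absorbs all the substantive difficulty of the thesis, the remaining argument is largely bookkeeping: one has to check at each subtraction step that the pair of ideals involved is comaximal and of height $n$ in $A[T]$, but this follows routinely from the comaximality of $K$ with $J$ together with the fact that a surjective image of a projective module of rank $n$ has minimal primes only of height $\le n$. The only place where the polynomial-ring context could have caused trouble is the moving step, and the hypothesis $2n \ge d+3$ is precisely tailored so that (\ref{moving}), applied in $A[T]$, still furnishes the required residual ideal $K$.
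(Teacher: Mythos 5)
Your proposal is correct and follows essentially the same route as the paper, which proves this theorem only by the remark that it goes ``in a similar manner'' to the affine Addition principle (\ref{add}) with (\ref{subtract}) replacing (\ref{subtraction}); your three subtraction steps mirror that proof exactly. The one ingredient you had to supply explicitly, applying the moving lemma (\ref{moving}) to the ring $A[T]$ of dimension $d+1$ (legitimate since $2n \geq d+3 \geq (d+1)+1$, with the case $\hh K > n$ forcing $K = A[T]$ because $J \cap K$ is a surjective image of a rank $n$ projective module), is handled properly, so the argument is complete.
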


\begin{application}
We will end this chapter by discussing some possible applications of 
Theorem \ref{general}. Let $A$ be a regular affine domain of
dimension $d$ over an infinite perfect field $k$. Let $P$ be a projective
$A$-module of rank $n$. It is interesting to know when $P$ has a unimodular
element. By a  classical result of Serre (\cite{S1}), if $n>d$, then $P$
has a unimodular element.
It is well known that this result is not true in general if $n=d$. 
So one can ask, if one can find the obstruction for a projective
module $P$ of rank $=\dim A$ to have a unimodular element.

In (\cite{Mu1}, Theorem 3.8), Murthy proved that if $P$ is a projective
$A$-module of rank $n=d$ and $k$ is algebraically closed, then, a
necessary
and sufficient condition for $P$ to have a unimodular element
is the vanishing of its ``top Chern class" $C_n(P)$ in the Chow group
$CH_0(A)$ of zero cycles modulo rational equivalence. However, this result
of Murthy is not true if $k$ is not algebraically closed, as is evidenced
by the example of the tangent bundle of the real $2$-sphere. 

To tackle the above question when $k$ is not necessarily algebraically
closed, Nori defined  the notion of {\it Euler class group}
of $A$ (see \cite{Bh-Raja-1})
 and to any projective $A$-module $P$ of rank $=\dim A$
with trivial determinant, he attached an element of this group, called
the Euler class of $P$. Then, he asked whether non-vanishing of Euler class
of $P$ is the only obstruction for $P$ to have a unimodular element.
Proving (\ref{general}) in the case $\dim
(A[T]/I)=1$, Bhatwadekar and Raja Sridharan
 answered Nori's question in affirmative (see \cite{Bh-Raja-1}). More
precisely, they proved that a necessary and sufficient condition for
$P$ to have a unimodular element is the vanishing of the
Euler class of $P$.

Now, let $A$ be as above and $2n\geq d+3$. Then, we can define the notion of
$n^{th}$ Euler class group of $A$, denoted by $E^n(A)$
(see \cite{Bh-Raja-5}).  Let $P$ be a
projective $A$-module of rank $n$ with trivial determinant. We believe
that using (\ref{general}),
one can attach an element of $E^n(A)$
corresponding to $P$ (the Euler class of $P$)
which will detect an obstruction for $P$ to have a unimodular element.
\end{application}


\end{document}